\numberwithin{equation}{section}
\theoremstyle{plain}
\newtheorem{thm}{Theorem}[section]
\newtheorem{dfn}{Definition}[section]
\newtheorem{prop}{Proposition}[section]
\newtheorem{cor}{Corrollary}[section]
\newtheorem{lem}{Lemma}[section]
\newtheorem{meth}{Method}
\newcommand{\E}{\mathbb{E}}
\def\1{\mbox{1\hspace{-.35em}1}}
\def\R{\mathbb{R}}
\def\N{\mathbb{N}}
\def\P{\mathbb{P}}
\newcommand{\mH}{\mathcal{H}}
\newcommand{\mR}{\mathcal{R}}
\newcommand{\Var}{\mbox{Var}}
\newcommand{\Sidak}{\v{S}id\'ak}
\newcommand{\MaxT}{MaxT}
\newcommand{\BootRW}{BootRW}
\renewcommand{\hat}{\widehat}
\renewcommand{\restriction}[2]{\left.#1\right\lvert_{#2}}
\newcommand{\subalign}[1]{%
  \vcenter{%
    \Let@ \restore@math@cr \default@tag
    \baselineskip\fontdimen10 \scriptfont\tw@
    \advance\baselineskip\fontdimen12 \scriptfont\tw@
    \lineskip\thr@@\fontdimen8 \scriptfont\thr@@
    \lineskiplimit\lineskip
    \ialign{\hfil$\m@th\scriptstyle##$&$\m@th\scriptstyle{}##$\hfil\crcr
      #1\crcr
    }%
  }%
}
\title{Asymptotic control of FWER under Gaussian assumption: application to correlation tests}
\author[a]{Sophie Achard}
\author[b]{Pierre Borgnat}
\author[c]{Ir\`ene Gannaz}
\affil[a]{Univ. Grenoble Alpes, CNRS, Inria, Grenoble INP, LJK, 38000 Grenoble, France}
\affil[b]{Univ Lyon, ENS de Lyon, UCB Lyon 1, CNRS, Laboratoire de Physique, F-69342 Lyon, France}
\affil[c]{Univ Lyon, INSA de Lyon, CNRS UMR 5208, Institut Camille Jordan, F-69621 Villeurbanne, France}
\begin{document}

\mathtoolsset{showonlyrefs} 
\counterwithout{equation}{section} 

\maketitle

\setlength{\parindent}{0pt}
\setlength{\parskip}{\baselineskip}

\begin{abstract}
In many applications, hypothesis testing is based on an asymptotic distribution of statistics. The aim of this paper is to clarify and extend multiple correction procedures when the statistics are asymptotically Gaussian. We propose a unified framework to prove their asymptotic behavior which is valid in the case of highly correlated tests. We focus on correlation tests where several test statistics are proposed. All these multiple testing procedures on correlations are shown to control FWER. An extensive simulation study on correlation-based graph estimation highlights finite sample behavior, independence on the sparsity of graphs and dependence on the values of correlations. Empirical evaluation of power provides comparisons of the proposed methods. Finally validation of our procedures is proposed on real dataset of rats brain connectivity measured by fMRI.  We confirm our theoretical findings by applying our procedures on a full null hypotheses with data from dead rats. Data on alive rats show the performance of the proposed procedures to correctly identify brain connectivity graphs with controlled errors.  
\end{abstract}

\textbf{Keywords.} Multiple testing, structure learning, correlation tests, FWER control, cerebral connectivity

Consider a family of probability distributions $\mathcal P$.
Let $\mathbb X^{(n)}=\left(X_1,\dots,X_n \right)$ be independent realizations from an unknown probability distribution $P$. We assume that $P$ belongs to the family $\mathcal P$. Denote
$
\theta (P) = \big(\theta_1(P),\dots,\theta_m(P)\big)$
the parameter vector of interest, with $m\geq 2$.
Observing $\mathbb X^{(n)}$, we aim at testing the following two-sided null hypotheses, for all $i \in \{1,\hdots,m\}$
\begin{equation}
\label{eqn:tests} \tag{P-1}
H_{0,i}: \:  \theta_i = 0\text{~ against ~ } H_{1,i}: \:  \theta_i \neq 0.
\end{equation}
For all $i \in \{1,\dots,m\}$, we consider test statistics $T_{n,i} \left (\mathbb X^{(n)} \right)$ {chosen according to \eqref{eqn:tests}.} 

The objective of multiple testing procedure is to give a rejection set $$\mathcal R = \{ i,\;  1 \leq i \leq m : (H_{0,i})\text{~rejected}\},$$ such that the error is controlled. We will consider here the type I error called Family Wise Error Rate (FWER), defined as $$\text{FWER}(\mathcal R, P) = \mathbb P\left(\exists i\in\mathcal R  :  \theta_{i}=0 \right).$$ To control the FWER for a given level $\alpha \in [0, 1]$, the objective is to find a procedure yielding a rejection set $\mathcal R$ such that $\text{FWER}(\mathcal R, P)\leq\alpha$.

The classical method to control the FWER is the \cite{bonferroni1935calcolo}'s method  where each individual hypothesis is tested at a significance level of $\alpha /m$ with $\alpha$ the desired overall FWER level and $m$ the total number of hypotheses to test. 
Although this method is very intuitive, it could be conservative if there is a large number of false hypotheses relative to the number of hypotheses being tested. Alternative methods have been proposed to improve the power, {\it e.g.}  \cite{holm1979simple},  \cite{dudoit2003multiple}, ponderated Bonferroni in \cite{finner2009controlling}. \cite{goeman2010sequential} proposed a general framework to describe most of these methods by using the sequential rejection principle.

This manuscript investigates the problem of FWER control when the test statistics $T_{n,i} \left (\mathbb X^{(n)} \right)$ are possibly dependent and have an asymptotic Gaussian distribution. This work is motivated by an application in neuroscience \citep{achard.2006.1}. Let $V$ denote the set of the indexes of brain regions, $V=\{1,\dots,p\}$. {Using brain imagery facilities, it is possible to record non invasively the activity of each brain regions.} The data are {then processed to give } estimations of a correlation matrix between the activity of brain regions $(\rho_{i,j})_{(i,j)\in V\times V}$. The objective is to infer the dependence graph $G=(V, E)$ where the set of edges $E$ is a subset of $V \times V$ defined by $E=\{ (i,j)  : \rho_{i,j}\neq 0\}.$
To estimate these edges, one has to consider for all $(i,j)\in V\times V$ the tests of the form 
\begin{equation}
\label{letest_intro} \tag{P-2}
H_{0,i j} : \rho_{i j}=0  \text{~ against ~}  H_{1,i j} :  \rho_{i j}\neq 0 
\end{equation} 
where $\rho_{i j}$ denotes a correlation between a variable $i$ and a variable $j$, corresponding to nodes {$V$}. In this setting, test statistics $T_{n,ij}\left( \mathbb X^{(n)} \right)$ are asymptotically Gaussian and possibly highly correlated. 
We hence propose to apply a multiple testing procedure on correlations. The set of edges is then estimated as $\{(i,j)\in V\times V\; | \; H_{0,i j} \text{~is rejected}\}$. For a given graph $G$, estimation $\hat G$ is obtained applying {$m=p\,(p-1)/2$} tests (since the set $E$ is symmetric by symmetry of the correlations -- the edges are undirected). Such an approach for graph inference has been proposed in \cite{drton2007multiple}. Other methods to estimate dependence graphs exist based on regularisation estimators \citep{Friedman2008, Meinshausen2006}. However, these methods need sparse assumptions on the graphs to be valid. In addition, to our knowledge, none of these latter methods ensures the control of FWER \citep{Rothman, Kramer, Cai}.  As an example, simulation studies state that Graphical Lasso approach select too many edges, see {\it e.g.} \cite{Kramer}. In applications such as network inference in neuroscience, a fine control of false discovered edges is crucial and this motivates the present work.

In this article, four existing asymptotic FWER controlling procedures are presented in an unified framework: \cite{bonferroni1935calcolo},  \cite{dudoit2003multiple}, \cite{romano2005exact},  \cite{drton2007multiple}. For each of them, we confirm that they control asymptotically the FWER for any underlying dependence structure, and when the sample size is sufficiently large. {Recent results are reviewed in a more general setting}. Our main  contribution is to clarify and {supplement} existing results in the literature. Additionally we provide an extensive comparison of methods, with different statistics. An empirical evaluation of the power of the proposed method using different graph structures is described where it is shown that the sample size is the only crucial parameter. These results are then applied on a real dataset {consisting of small animal brain recordings} by fMRI, where recordings on dead rats provide a null model from the experiments \cite{guillaume2020functional}.

The article is organized in five parts. The first part defines the asymptotic tests setting and describe the four multiple testing procedures. For each of these methods, single-step and step-down approaches are described. The second part is dedicated to the application to multiple correlation testing. Simulations are proposed in a third part, where we study among others the behavior of the power with respect to the sparsity of the set of rejected hypothesis. In the fourth part, our approach is applied to a real fMRI dataset on rats. Finally, we comment a possible extension to False Discovery Rate control.

\section{Procedures controlling asymptotically the FWER}
 
\label{sec:procedures}

For all $i \in \{1,\hdots,m\}$, we consider tests (\ref{eqn:tests}), based on test statistics $T_{n,i} \left (\mathbb X^{(n)} \right)$. {Fix} $T_{n,i} \left (\mathbb X^{(n)} \right)= \sqrt n \: \widehat \theta_{n,i}\left (\mathbb X^{(n)} \right).$ 
Throughout this manuscript, we assume that $\hat \theta_{n,\cdot}\left (\mathbb X^{(n)} \right)$ is a consistent estimator of $\theta (P)$ and that it has an asymptotic Gaussian distribution. Namely, for all $P \in \mathcal P$
\begin{equation} \label{hypGauss}
\sqrt{n}\left( \: \hat \theta_{n,\cdot}(\mathbb X^{(n)}) - \theta(P) \right)  \mathop{\longrightarrow}^{\quad d \quad} \mathcal N_m( 0, \Sigma), \quad \text{when~}{n\to +\infty},
\end{equation}
where $\displaystyle{\mathop{\longrightarrow}^{\quad d \quad}}$ denotes the convergence in distribution. We assume that $\Sigma$ is invertible and $\Sigma_{ii} =1$ when $\theta_i=0$. That is, every statistic is normalized under the null hypothesis of \eqref{eqn:tests}, so that $\Var(T_i)=1$, for all $i\in\{j : H_{0j}\text{~holds} \}$. It is not restrictive since one needs to control the variance of the statistics independently of the observations in order to apply a statistical test.

Let $ \left(p_{n,i} \left (\mathbb X^{(n)} \right) \right )_{1 \le i \le m}$ be a family of $p$-values resulting from each $m$ individual test. The asymptotic Gaussian assumption~\eqref{hypGauss} gives rise to the asymptotic $p$-value process:
\begin{equation} \label{pval} 
\forall i \in \{1,\hdots,m\}, \: p_{n,i} \left ( \mathbb X^{(n)}\right) = 2\left [ 1- \Phi \left(  \bigl| T_{n,i}(\mathbb X^{(n)}) \bigr| \right)  \right],
\end{equation}
where $\Phi$ is the standard Gaussian cumulative distributive function.
Multiple testing procedures will be based on this $p$-value process. It is worth pointing that no assumption on the dependence structure of the $p$-values is needed.

{In this section, we proceed with the study of testing simultaneously $m$ hypotheses, $H_{0,i}$ against $H_{1,i}$ for $i \in \{1,\dots,m\}$. Results are reviewed in a more general setting where the Family Wise Error Rate (FWER) is used as multiple testing criterion.} 

For all $P \in \mathcal P$, let $\mathcal H_0(P) = \{i \in \{1,\hdots,m\}: \theta_i = 0\}$ be the index set of true null hypotheses, that is, the index set of all $i$ such that $H_{0,i}$ is satisfied for $P$. Denote $m_0(P) = |\mathcal H_0(P)|$, its cardinality. The FWER depends on the rejected set $\mathcal R$ and on the (unknown) distribution $P$ of the observations. The FWER corresponds to the probability of rejecting at least one true null hypothesis, namely
\begin{equation}
\forall P \in \mathcal P, \text{ FWER}( \mathcal R,P) = \mathbb P(|\mathcal R \cap \mathcal H_0(P)| \ge 1).
\end{equation}

Since we consider an asymptotic $p$-value process, we can only get asymptotic results in terms of control of the errors.
\begin{dfn} A multiple testing procedure $\mathcal R$ is said to asymptotically control the FWER for a distribution family $\mathcal P$ at level $\alpha$ if for all $P \in \mathcal P$,
 \begin{equation}
 \underset{n \to + \infty}{\limsup} \text{ FWER}(\mathcal R,P) \le \alpha. 
 \end{equation}
 \end{dfn}
{In this section we will} describe multiple testing procedures which asymptotically control FWER for the two-sided testing problem \eqref{eqn:tests}, based on the asymptotic $p$-value process \eqref{pval}.

\subsection{Single-step procedures}

We propose four procedures to determine the rejection set.

\subsection*{Bonferroni}
The Bonferroni procedure, \cite{bonferroni1935calcolo}, is the most classical example of FWER control. 

\begin{meth}[Bonferroni] The  Bonferroni multiple testing procedure is defined by
\begin{equation}
\mR^{bonf}_\alpha = \left \{ i \in \{1,\hdots,m\}: p_{n,i} \le \frac{\alpha}{m} \right \}.
\end{equation}
\end{meth}

\begin{prop} 
\label{prop:fwerbonf} 
For the two-sided testing problem \eqref{eqn:tests} based on the asymptotic $p$-value process \eqref{pval}, the method $\mathcal R^{bonf}_\alpha$  provides an asymptotic control of the FWER at level $\alpha$, that is, for all $P \in \mathcal P$,
\begin{equation*}  
\underset{n\to+\infty}{\lim} \rm{ FWER}\Big(\mathcal R^{bonf}_{\alpha},P\Big) \le \alpha.  
\end{equation*}
\end{prop}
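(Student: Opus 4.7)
The plan is to combine a classical union bound with asymptotic uniformity of each null $p$-value. First I would write
\begin{equation*}
\mathrm{FWER}(\mathcal R^{bonf}_\alpha,P) \;=\; \mathbb{P}\!\left( \bigcup_{i\in\mathcal H_0(P)} \{p_{n,i}\le \alpha/m\}\right) \;\le\; \sum_{i\in\mathcal H_0(P)} \mathbb{P}(p_{n,i}\le \alpha/m),
\end{equation*}
so the task reduces to controlling each marginal rejection probability. Note that this step already discards any information about the dependence structure $\Sigma$, which is consistent with Bonferroni being insensitive to correlations between the tests.

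Next I would exploit assumption \eqref{hypGauss} marginally. For $i\in\mathcal H_0(P)$ we have $\theta_i(P)=0$ and $\Sigma_{ii}=1$, so the $i$-th coordinate of \eqref{hypGauss} gives $T_{n,i}(\mathbb X^{(n)})=\sqrt n\,\widehat\theta_{n,i}(\mathbb X^{(n)})\xrightarrow{d}\mathcal N(0,1)$. Continuity of $|\cdot|$ and of $\Phi$ together with the continuous mapping theorem yield $p_{n,i}\xrightarrow{d} 2\bigl[1-\Phi(|Z|)\bigr]$ for $Z\sim\mathcal N(0,1)$. An elementary calculation (symmetry of $Z$ plus the probability integral transform) shows this limit law is $\mathrm{Uniform}[0,1]$, whose CDF is continuous at every point. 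Hence for each $i\in\mathcal H_0(P)$,
\begin{equation*}
\lim_{n\to+\infty}\mathbb{P}\!\left(p_{n,i}\le \alpha/m\right) \;=\; \alpha/m.
\end{equation*}

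Combining the two steps, and using $m_0(P)\le m$, gives
\begin{equation*}
\limsup_{n\to+\infty}\mathrm{FWER}(\mathcal R^{bonf}_\alpha,P) \;\le\; \sum_{i\in\mathcal H_0(P)}\frac{\alpha}{m} \;=\; \frac{m_0(P)}{m}\,\alpha \;\le\; \alpha,
\end{equation*}
which is the desired conclusion. There is no real obstacle here: the only technical point is justifying that the limsup passes inside a finite sum, which follows because we bound FWER by a finite sum of probabilities each of which has a genuine limit. The proposition is essentially a sanity check confirming that the Gaussian asymptotics \eqref{hypGauss} plus the normalization $\Sigma_{ii}=1$ under $H_{0,i}$ are exactly what is needed to recover the classical (non-asymptotic) Bonferroni guarantee in the limit.
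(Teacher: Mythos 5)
Your proof is correct and follows essentially the same route as the paper's: a union bound over $\mathcal H_0(P)$ followed by passage to the limit in each marginal rejection probability. The only difference is that you explicitly justify $\lim_n \mathbb P(p_{n,i}\le\alpha/m)=\alpha/m$ via the marginal Gaussian limit, the continuous mapping theorem and the continuity of the Uniform$[0,1]$ CDF, a step the paper leaves implicit.
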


This control does not require any assumption on the dependence structure of the $p$-values, however under strong dependence the Bonferroni correction is known to be conservative, see \cite{bland1995multiple}.

\subsection*{\v{S}id\'ak} 
\label{sec:PLOD}
As mentioned by \cite{westfall1993resampling}, an asymptotic FWER controlling procedure can be derived by \Sidak's inequality \citep{vsidak1967rectangular}.

\begin{thm}[\Sidak's inequality, \cite{vsidak1967rectangular}]
Let $X$ be a random vector having an $m$-multivariate normal distribution with zero mean values and invertible covariance matrix. Then $X$ satisfies the following inequality, for every positive constant $b \in \R^m_+$,
\begin{equation} 
\label{Sid} 
\P(|X_1| \le b_1, \hdots, |X_m| \le b_m) \,\ge\, \prod_{i=1}^m\P(|X_i| \le b_i). 
\end{equation}  
\end{thm}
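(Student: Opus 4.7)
I would prove the inequality by induction on $m$, the base case $m=1$ being trivial and each inductive step being the reduction
\begin{equation*}
\P(|X_1|\le b_1,\dots,|X_m|\le b_m) \;\ge\; \P(|X_1|\le b_1)\cdot\P(|X_2|\le b_2,\dots,|X_m|\le b_m),
\end{equation*}
after which the induction hypothesis applied to the centered Gaussian sub-vector $(X_2,\dots,X_m)$ finishes the argument. A preliminary rescaling $X_i\mapsto X_i/\sqrt{\Sigma_{ii}}$ and $b_i\mapsto b_i/\sqrt{\Sigma_{ii}}$ lets me assume $\Sigma_{ii}=1$.

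To prove the reduction I would condition on $X_1$: since the full vector is Gaussian, the conditional law of $(X_2,\dots,X_m)$ given $X_1=x$ is $\mathcal N(xv,\Gamma)$ for some fixed $v\in\R^{m-1}$ and covariance $\Gamma$ that do not depend on $x$. Writing $B=[-b_2,b_2]\times\cdots\times[-b_m,b_m]$, $\varphi_\Gamma$ for the centered Gaussian density with covariance $\Gamma$, and
\begin{equation*}
g(x) \;=\; \int_B \varphi_\Gamma(y-xv)\,dy \;=\; \P\bigl(|X_2|\le b_2,\dots,|X_m|\le b_m\bigm| X_1=x\bigr),
\end{equation*}
the tower property recasts the reduction as $\mathrm{Cov}\bigl(\1_{\{|X_1|\le b_1\}},g(X_1)\bigr)\ge 0$.

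The crux of the argument, and the step I expect to carry the real weight, is showing that $g$ is even in $x$ and non-increasing in $|x|$. Evenness follows by substituting $y\mapsto -y$ in the integral defining $g(-x)$ and using symmetry of $B$ and of $\varphi_\Gamma$. For monotonicity I would invoke Anderson's inequality: for any convex body $K\subset\R^{m-1}$ symmetric about the origin and any symmetric log-concave density $\varphi$, the map $t\mapsto \int_K \varphi(y-tv)\,dy$ is non-increasing on $[0,\infty)$. Applied with $K=B$, $\varphi=\varphi_\Gamma$, and $t=|x|$, this delivers the required unimodality. Anderson's theorem is the only nontrivial external ingredient; the rest is bookkeeping.

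Once $g(X_1)$ and $\1_{\{|X_1|\le b_1\}}$ are both expressed as non-increasing functions of $|X_1|$, say $G(|X_1|)$ and $H(|X_1|)$, the nonnegative covariance follows from Chebyshev's association inequality, proved via $\E\bigl[(G(|X_1|)-G(|Y_1|))(H(|X_1|)-H(|Y_1|))\bigr]\ge 0$ for an independent copy $Y_1$ and $X_1$ symmetric. Combined with the identity $\E[g(X_1)]=\P(|X_2|\le b_2,\dots,|X_m|\le b_m)$, this closes the reduction step, and iterating the induction yields the product bound \eqref{Sid}.
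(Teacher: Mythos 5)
The paper does not actually prove this theorem: it is imported verbatim from \cite{vsidak1967rectangular} and used as a black box in the proof of Proposition~\ref{prop:fwersidak}, so there is no in-paper argument to compare against. Your proof is correct and is the standard ``Anderson route'' to \Sidak's inequality (as in Tong's book on the multivariate normal): the induction correctly reduces to the single covariance inequality $\mathrm{Cov}\bigl(\1_{\{|X_1|\le b_1\}},g(X_1)\bigr)\ge 0$; the conditional law of $(X_2,\dots,X_m)$ given $X_1=x$ is indeed $\mathcal N(xv,\Gamma)$ with $v$ and the Schur complement $\Gamma$ independent of $x$ (and $\Gamma$ nonsingular since $\Sigma$ is); evenness of $g$ and its monotonicity in $|x|$ via Anderson's theorem are exactly what is needed to write both factors as non-increasing functions of $|X_1|$, after which the one-dimensional Chebyshev association inequality closes the step, and the induction hypothesis legitimately applies to the subvector because principal submatrices of $\Sigma$ remain positive definite. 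Two minor remarks: the initial rescaling to $\Sigma_{ii}=1$ and the symmetry of $X_1$ in the association step are both superfluous (comonotone functions of the same real variable are positively correlated regardless), and your argument in fact never uses invertibility beyond guaranteeing densities exist, so it extends to the degenerate case by a limiting argument. For contrast, \Sidak's original proof avoids Anderson's theorem by interpolating the correlation between $X_1$ and the remaining coordinates with a parameter $\lambda\in[0,1]$ and showing the rectangle probability is non-decreasing in $\lambda$, factorizing at $\lambda=0$; your version outsources the analytic work to Anderson's inequality, which makes the bookkeeping cleaner at the cost of one deeper external ingredient.
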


For the specific case of correlation testing, \cite{drton2004model} used this inequality to construct a procedure that asymptotically controls the FWER for the problem \eqref{eqn:tests}.

\begin{meth}[\Sidak] 
Let $c_{\alpha}^s = \Phi^{-1} \Big (\frac{1}{2} (1-\alpha)^{1/m} + \frac{1}{2} \Big) > 0$. The  \v{S}id\'ak's multiple testing procedure is defined by
\begin{equation}
\mR^s_\alpha = \big \{ i \in \{1,\hdots,m\}: |T_{n,i}| > c_{\alpha}^s \big \}.
\end{equation}
\end{meth}

\begin{prop} 
\label{prop:fwersidak} 
For the two-sided testing problem~\eqref{eqn:tests} for which the asymptotic Gaussian assumption~\eqref{hypGauss} holds, the method $\mathcal R^s_\alpha$  provides an asymptotic control of the FWER at level $\alpha$, namely, for all $P$ such that $\Sigma$ is invertible,
\begin{equation*}  
\underset{n\to+\infty}{\lim} \: \rm{ FWER}(\mathcal R^s_{\alpha},P) \le \alpha.  
\end{equation*}
\end{prop}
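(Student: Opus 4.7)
The plan is to unwind the FWER definition, invoke weak convergence to a multivariate Gaussian on the subvector indexed by true nulls, and then close the argument with the \Sidak{} inequality plus an elementary numerical comparison.

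First I would write, for any $P\in\mathcal P$,
\begin{equation*}
\mathrm{FWER}(\mathcal R^s_\alpha,P)
 = 1-\mathbb P\Bigl(\,\forall i\in\mathcal H_0(P):\; |T_{n,i}|\le c^s_\alpha\Bigr).
\end{equation*}
The task is therefore to lower-bound the probability on the right asymptotically. Writing $m_0=m_0(P)$ and restricting the convergence~\eqref{hypGauss} to coordinates in $\mathcal H_0(P)$ (on which $\theta_i=0$, so $T_{n,i}=\sqrt n\,\hat\theta_{n,i}$ has zero mean), the subvector $(T_{n,i})_{i\in\mathcal H_0(P)}$ converges in distribution to $\mathcal N_{m_0}(0,\Sigma_0)$, where $\Sigma_0$ is the principal submatrix of $\Sigma$ indexed by $\mathcal H_0(P)$. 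Since $\Sigma$ is positive definite (an invertible covariance matrix), so is $\Sigma_0$, and in particular the limiting distribution charges no hyperplane. The rectangle $\prod_{i\in\mathcal H_0(P)}[-c_\alpha^s,c_\alpha^s]$ is then a continuity set of the limit, so by the portmanteau theorem
\begin{equation*}
\lim_{n\to+\infty}\mathbb P\Bigl(\,\forall i\in\mathcal H_0(P):\;|T_{n,i}|\le c^s_\alpha\Bigr) = \mathbb P\Bigl(\,\forall i\in\mathcal H_0(P):\; |Z_i|\le c^s_\alpha\Bigr),
\end{equation*}
with $Z\sim\mathcal N_{m_0}(0,\Sigma_0)$.

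Next I would apply \Sidak's inequality~\eqref{Sid} to $Z$ (this is exactly the step for which positive definiteness of $\Sigma_0$ was needed). Because $\Sigma_{ii}=1$ for $i\in\mathcal H_0(P)$, each $Z_i$ is standard Gaussian, hence
\begin{equation*}
\mathbb P(|Z_i|\le c^s_\alpha) = 2\Phi(c^s_\alpha)-1 = (1-\alpha)^{1/m},
\end{equation*}
by the very definition of $c^s_\alpha$. Šidák's inequality therefore yields
\begin{equation*}
\mathbb P\Bigl(\forall i\in\mathcal H_0(P):\;|Z_i|\le c^s_\alpha\Bigr)\;\ge\;(1-\alpha)^{m_0/m}.
\end{equation*}

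Finally I would conclude: since $m_0\le m$ and $1-\alpha\in[0,1]$, $(1-\alpha)^{m_0/m}\ge 1-\alpha$, so
\begin{equation*}
\limsup_{n\to+\infty}\mathrm{FWER}(\mathcal R^s_\alpha,P)\;\le\;1-(1-\alpha)^{m_0/m}\;\le\;\alpha,
\end{equation*}
which is the claim. The proof is essentially routine once weak convergence is in place; the only genuine subtleties are checking that $\Sigma_0$ inherits positive definiteness from $\Sigma$ (so that \Sidak's inequality applies and the rectangle is a continuity set), and using the marginal normalization $\Sigma_{ii}=1$ to evaluate the single-coordinate probability at the calibrated threshold $c^s_\alpha$. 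I expect no real obstacle beyond these bookkeeping items.
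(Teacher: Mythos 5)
Your proof is correct and follows essentially the same route as the paper's: restrict the weak convergence \eqref{hypGauss} to the null coordinates, use the portmanteau theorem on the rectangle (whose boundary is a null set for the non-degenerate Gaussian limit), apply \Sidak's inequality \eqref{Sid} together with the normalization $\Sigma_{ii}=1$ under the null, and finish with $(1-\alpha)^{m_0/m}\ge 1-\alpha$. The paper packages the portmanteau-plus-\Sidak{} step as a separate Proposition (Proposition~\ref{propport}), but the content is identical.
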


The procedure is valid for any dependencies, as soon as the inequality~\eqref{Sid} holds. This is in particular true in Gaussian setting \citep{vsidak1967rectangular}.

The \Sidak's procedure is less conservative than the Bonferroni procedure. This comparison is illustrated in table 2.2 of  \cite{westfall1993resampling}{, where} the difference between the two adjustments becomes larger with larger $m$.    

\subsection*{Non parametric bootstrap}
\label{{sec:romano-wolf}}
\cite{romano2005exact} propose an asymptotic FWER controlling procedure which only requires a monotonic assumption on the family of thresholds. 
     
\begin{meth}[\BootRW] For all  $\mathcal C \subset \{1,\hdots,m\}$, let $t_{n,\alpha}(\Sigma,\mathcal C)$ be the $ (1-\alpha)$-quantile of the probability distribution $\mathcal L \left( \| \restriction{ \mathcal N_m(0, \Sigma)}{\mathcal C}\|_{\infty}\right)$,
where $\restriction{ \mathcal N_m(0,\Sigma)}{\mathcal C}$ is the restriction of the Gaussian distribution $ \mathcal N_m(0,\Sigma)$ on $\mathcal C$, namely $ \mathcal N_{|\mathcal C|}\Big(0,(\Sigma)_{i,i^{\prime} \in \mathcal C \times \mathcal C}\Big) $. The Romano-Wolf's multiple testing procedure is defined by
\begin{equation}
\mR^{\BootRW}_\alpha = \Big \{ i \in \{1,\hdots,m\}: |T_{n,i}| > \hat t_{n,\alpha}(\Sigma,\mathcal C_m)\Big \},
\end{equation}
where $\mathcal C_m = \{1,\hdots,m\}$ and $ \hat t_{n,\alpha}(\Sigma,\mathcal C_m)$ is computed using bootstrap resamples of $\mathbb X^{(n)}$. A bootstrap resample from $\mathbb X^{(n)}$ is denoted by $\mathbb X^{(n)^*}$ and defined as an $n$ independent and identically distributed ({\it i.i.d.}) sample from the empirical distribution of $\mathbb X^{(n)}$.
\end{meth}

Note that this method closely relies on its ability to approximate the joint distribution of the test statistics. 
\begin{prop}[\cite{romano2005exact}] \label{prop:rwcontrol}  Assume that for any metric $d$ metrizing weak convergence on $\mathbb R^{m_0(P)}$,
\begin{equation} d \left ( \mathcal L \left(    \left(T_{n,i}\left(\mathbb X^{(n)^*}\right)\right)_{i \in \mathcal H_0(P)} \Big | \: \mathbb X^{(n)}   \right), \mathcal L \left( \left(T_{n,i}\left(\mathbb X^{(n)}\right)\right)_{i \in \mathcal H_0(P)} \right)  \right) \overset{\mathbb P}{\longrightarrow} 0, \nonumber \end{equation}  
where $\overset{\mathbb P}{\longrightarrow}$ means a convergence in probability. $ \mathcal L \left(    \left(T_{n,i}\left(\mathbb X^{(n)^*}\right)\right)_{i \in \mathcal H_0(P)} \Big | \: \mathbb X^{(n)}   \right)$ denotes the conditional distribution of $ \left(T_{n,i}\left(\mathbb X^{(n)^*}\right)\right)_{i \in \mathcal H_0(P)}$ given $\mathbb X^{(n)}$.

Then, for the two-sided testing problem~\eqref{eqn:tests} for which the asymptotic Gaussian assumption~\eqref{hypGauss} holds, the method  $\mathcal R^{\BootRW}_\alpha$ provides an asymptotic control of the FWER at level $\alpha$, that is, for all $P$ such that $\Sigma$ is invertible,
\begin{equation*}  
\underset{n\to+\infty}{\limsup} \: \rm{ FWER}(\mathcal R^{\BootRW}_\alpha, P) \le \alpha.  
\end{equation*}
\end{prop}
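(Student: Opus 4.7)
The plan is to express the FWER as a probability involving only indices in $\mathcal H_0(P)$, dominate $\hat t_{n,\alpha}(\Sigma,\mathcal C_m)$ by a smaller (unobservable) intermediate bootstrap quantile whose consistency follows from the stated assumption, and conclude by a Slutsky-type argument. First, by definition of $\mathcal R^{\BootRW}_\alpha$, a false rejection occurs exactly when some $i\in\mathcal H_0(P)$ satisfies $|T_{n,i}|>\hat t_{n,\alpha}(\Sigma,\mathcal C_m)$, so
\[ \text{FWER}(\mathcal R^{\BootRW}_\alpha,P) = \mathbb P\Big(\max_{i\in\mathcal H_0(P)}|T_{n,i}(\mathbb X^{(n)})| > \hat t_{n,\alpha}(\Sigma,\mathcal C_m)\Big). \]

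The monotonicity step exploits $\mathcal H_0(P)\subset\mathcal C_m$: for every bootstrap resample $\mathbb X^{(n)^*}$ one has $\max_{i\in\mathcal C_m}|T_{n,i}(\mathbb X^{(n)^*})| \ge \max_{i\in\mathcal H_0(P)}|T_{n,i}(\mathbb X^{(n)^*})|$ pointwise, so the corresponding $(1-\alpha)$-quantiles of the conditional bootstrap law satisfy $\hat t_{n,\alpha}(\Sigma,\mathcal C_m) \ge \hat t_{n,\alpha}(\Sigma,\mathcal H_0(P))$. Replacing the actual threshold by this smaller one can only enlarge the rejection event, so
\[ \text{FWER}(\mathcal R^{\BootRW}_\alpha,P) \le \mathbb P\Big(\max_{i\in\mathcal H_0(P)}|T_{n,i}(\mathbb X^{(n)})| > \hat t_{n,\alpha}(\Sigma,\mathcal H_0(P))\Big). \]

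I would then apply the continuous mapping theorem with the map $(x_i)_{i\in\mathcal H_0(P)}\mapsto\max_i|x_i|$ to the bootstrap consistency hypothesis: the conditional law of $\max_{i\in\mathcal H_0(P)}|T_{n,i}(\mathbb X^{(n)^*})|$ given $\mathbb X^{(n)}$ converges in probability to the law of $\|\restriction{\mathcal N_m(0,\Sigma)}{\mathcal H_0(P)}\|_\infty$, whose CDF is continuous since $\Sigma$ is invertible. A standard P\'olya-type quantile argument then yields $\hat t_{n,\alpha}(\Sigma,\mathcal H_0(P)) \overset{\mathbb P}{\longrightarrow} q_\alpha$, where $q_\alpha$ denotes the $(1-\alpha)$-quantile of this limit law. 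In parallel, assumption \eqref{hypGauss} combined with continuous mapping gives $\max_{i\in\mathcal H_0(P)}|T_{n,i}(\mathbb X^{(n)})| \overset{d}{\longrightarrow} \|\restriction{\mathcal N_m(0,\Sigma)}{\mathcal H_0(P)}\|_\infty$. A Slutsky combination with continuity of the limit CDF at $q_\alpha$ then concludes
\[ \limsup_{n\to+\infty}\text{FWER}(\mathcal R^{\BootRW}_\alpha,P) \le \mathbb P\big(\|\restriction{\mathcal N_m(0,\Sigma)}{\mathcal H_0(P)}\|_\infty > q_\alpha\big) \le \alpha. \]
The main technical obstacle lies in this last step: converting weak-in-probability convergence of the conditional bootstrap distribution into in-probability convergence of its $(1-\alpha)$-quantile requires care, and crucially relies on continuity of the limit CDF at $q_\alpha$, which is guaranteed by invertibility of $\Sigma$.
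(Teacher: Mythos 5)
Your argument is correct. Note, however, that the paper does not actually write out this proof: its ``proof'' of Proposition~\ref{prop:rwcontrol} consists of two lines observing that invertibility of $\Sigma$ makes the asymptotic distribution of $(T_{n,i})_{i\in\mathcal H_0(P)}$ non-degenerate and then deferring entirely to Theorem~7 of \cite{romano2005exact}. What you have produced is a self-contained reconstruction of essentially that cited theorem's single-step argument: the reduction of the FWER to the event $\max_{i\in\mathcal H_0(P)}|T_{n,i}|>\hat t_{n,\alpha}(\Sigma,\mathcal C_m)$, the domination of the observable threshold by the (unobservable) quantile computed over $\mathcal H_0(P)$ via monotonicity of the maximum, the consistency of that bootstrap quantile obtained by combining the stated bootstrap hypothesis with \eqref{hypGauss} and a quantile-convergence lemma, and the final Slutsky step. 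The trade-off is the usual one: the paper's version is shorter but opaque, while yours makes visible exactly where each hypothesis enters (invertibility of $\Sigma$ for non-degeneracy of the limit, the bootstrap consistency for the quantile). One small point of precision: for the quantile-convergence step you invoke continuity of the limit CDF at $q_\alpha$, but the standard lemma (e.g.\ Lemma~11.2.1 of Lehmann and Romano) also requires the limit CDF to be strictly increasing in a neighbourhood of $q_\alpha$ so that the $(1-\alpha)$-quantile is well separated; this holds here because the non-degenerate Gaussian restricted to $\mathcal H_0(P)$ gives $\|\cdot\|_\infty$ a positive density on $(0,\infty)$, so the gap is only in the statement, not in substance.
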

This result is also derived in \cite{dudoit2007multiple} (where \BootRW~is procedure 4.21). The proof in our setting is rewritten in the Appendix.

\subsection*{Parametric bootstrap}
\cite{drton2007multiple} {detailled} a parametric bootstrap method for testing \eqref{eqn:tests} {on} partial correlation coefficients. {This method differs from \cite{romano2005exact}.} Indeed, the quantile is here evaluated on the asymptotic distribution rather than the empirical distribution. {An estimation} of the matrix $\Sigma$ {is needed}. Denote by $\hat \Sigma_n$ such an estimator. 

\begin{meth}[\MaxT] Let $t_{n,\alpha}(\hat \Sigma_n)$ be the $(1-\alpha)$-quantile of the distribution $\mathcal L\Big (\| \mathcal N_m(0,\hat \Sigma_n)\|_{\infty} \Big)$. The \MaxT~multiple testing procedure is defined by
\begin{equation}\label{eq:maxTinf}
\mR^{\MaxT}_\alpha = \Big \{ i \in \{1,\hdots,m\}: |T_{n,i}| > t_{n,\alpha} \big(\hat \Sigma_n \big) \Big \},
\end{equation}
where $t_{n,\alpha} \big(\hat \Sigma_n \big)$ is computed using (simulated) samples of $\mathcal N_m(0,\hat \Sigma_n)$.
\end{meth}

\begin{prop}\label{prop:maxTcontrol}
Assume that $(\hat \Sigma_n)$ is a consistent estimator of $\Sigma$, that is, when $n$ goes to infinity,
\begin{equation} \label{estcv} \hat \Sigma_n \overset{\mathbb P}{\longrightarrow}  \Sigma.
\end{equation} 
Then, for the two-sided testing problem~\eqref{eqn:tests} for which the asymptotic Gaussian assumption~\eqref{hypGauss} holds, the method  $\mathcal R^{\MaxT}_\alpha$ provides an asymptotic control of the FWER at level $\alpha$, namely,  for all $P$ such that $\Sigma$ is invertible,
\begin{equation}  
\underset{n\to+\infty}{\limsup} \: \rm{ FWER}(\mathcal R^{\MaxT}_\alpha, P) \le \alpha.  
\end{equation}
\end{prop}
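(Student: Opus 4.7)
The plan is to mimic the standard consistency argument for plug-in test statistics: show that the data-driven critical value $t_{n,\alpha}(\hat\Sigma_n)$ converges in probability to the quantile of the limiting max distribution, and then combine this with the joint asymptotic normality of the null test statistics together with the fact that taking a quantile of a max over a larger index set can only inflate the threshold.

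First, I would write
\begin{equation*}
\mathrm{FWER}(\mathcal R^{\MaxT}_\alpha,P) \;=\; \mathbb P\Big(\max_{i\in\mathcal H_0(P)} |T_{n,i}(\mathbb X^{(n)})| \,>\, t_{n,\alpha}(\hat\Sigma_n)\Big),
\end{equation*}
so only the block $\Sigma_{00} := (\Sigma_{i,i'})_{i,i'\in\mathcal H_0(P)}$ of $\Sigma$ drives the probability on the left, while the threshold is built from the full matrix $\hat\Sigma_n$. Under \eqref{hypGauss}, marginalization gives
$\big(T_{n,i}\big)_{i\in\mathcal H_0(P)} \xrightarrow{d} \mathcal N_{m_0}(0,\Sigma_{00})$, and by the continuous mapping theorem $\max_{i\in\mathcal H_0(P)}|T_{n,i}|$ converges in distribution to $Z_0 := \|\mathcal N_{m_0}(0,\Sigma_{00})\|_\infty$.

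Next I would handle the critical value. Let $F_\Sigma$ denote the cdf of $\|\mathcal N_m(0,\Sigma)\|_\infty$ and $t_\alpha(\Sigma) := F_\Sigma^{-1}(1-\alpha)$. Because $\Sigma$ is invertible, $\|\mathcal N_m(0,\Sigma)\|_\infty$ has a density on $(0,\infty)$, so $F_\Sigma$ is continuous and strictly increasing in a neighborhood of $t_\alpha(\Sigma)$. A standard weak-continuity argument (the map $\Sigma'\mapsto \mathcal N_m(0,\Sigma')$ is continuous in the weak topology, hence so is $\Sigma'\mapsto F_{\Sigma'}$ pointwise, and a Polya-type uniform convergence holds at continuity points) combined with \eqref{estcv} yields
\begin{equation*}
t_{n,\alpha}(\hat\Sigma_n) \;\xrightarrow{\mathbb P}\; t_\alpha(\Sigma).
\end{equation*}
Details: I would show $F_{\hat\Sigma_n}(x)\to F_\Sigma(x)$ in probability for every $x$, then invert using the local strict monotonicity of $F_\Sigma$ at $t_\alpha(\Sigma)$.

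Third, I would use the key monotonicity observation that the max over the full index set stochastically dominates the max over the sub-block $\mathcal H_0(P)$: for $\mathbf Z\sim\mathcal N_m(0,\Sigma)$,
\begin{equation*}
\|\mathbf Z\|_\infty \;\ge\; \|\mathbf Z_{\mathcal H_0(P)}\|_\infty \quad \text{a.s.},
\end{equation*}
whence $t_\alpha(\Sigma) \ge t_\alpha(\Sigma_{00})$, the $(1-\alpha)$-quantile of the law of $Z_0$.

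Finally I would assemble the pieces. Fix $\varepsilon>0$. By the two convergences above, applying Slutsky to the pair $(\max_{i\in\mathcal H_0(P)}|T_{n,i}|,\, t_{n,\alpha}(\hat\Sigma_n))$ and then the portmanteau theorem at the continuity point $t_\alpha(\Sigma)-\varepsilon$ of the law of $Z_0$,
\begin{equation*}
\limsup_{n\to\infty} \mathrm{FWER}(\mathcal R^{\MaxT}_\alpha,P) \;\le\; \mathbb P\big(Z_0 \ge t_\alpha(\Sigma) - \varepsilon\big).
\end{equation*}
Letting $\varepsilon\downarrow 0$ and using $t_\alpha(\Sigma)\ge t_\alpha(\Sigma_{00})$ together with continuity of the distribution of $Z_0$ yields $\limsup_n \mathrm{FWER} \le \mathbb P(Z_0 \ge t_\alpha(\Sigma_{00})) = \alpha$.

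The main obstacle I anticipate is the consistency of the plug-in quantile $t_{n,\alpha}(\hat\Sigma_n)\xrightarrow{\mathbb P} t_\alpha(\Sigma)$: it is intuitive but requires care, because the quantile function is not a bounded continuous functional of the law. The cleanest route is to argue that at invertible $\Sigma$, the cdf of $\|\mathcal N_m(0,\cdot)\|_\infty$ is locally uniformly continuous near $t_\alpha(\Sigma)$, which reduces the step to ordinary continuous-mapping. Once this is in hand, the remaining pieces (marginal asymptotic normality of null statistics, the max-over-supersets monotonicity, Slutsky) are routine.
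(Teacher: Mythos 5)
Your proposal is correct and follows essentially the same route as the paper's proof: convergence in distribution of $\max_{i\in\mathcal H_0(P)}|T_{n,i}|$ via continuous mapping, consistency of the plug-in quantile $t_{n,\alpha}(\hat\Sigma_n)\xrightarrow{\mathbb P}t_\alpha(\Sigma)$ through pointwise convergence of the cdfs, and an $\varepsilon$-slack union bound assembled with the portmanteau theorem. You additionally spell out the final monotonicity step $t_\alpha(\Sigma)\ge t_\alpha(\Sigma_{00})$ that the paper leaves implicit, which is a welcome clarification but not a different argument.
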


{Even if} many results are known on the maximum of Gaussian variables (see {\it e.g.} \cite{nadarajah2008exact} and references therein), there is no explicit formula of quantile $t_{n,\alpha}(\hat \Sigma_n)$ of absolute multivariate Gaussian distributions in general case. {Therefore an estimation is required. We propose here to estimate the quantile by parametric bootstrap in Method~4.} {It is also possible to use, for example,} \cite{GenzBretz}'s algorithm (available in function \texttt{qmvnorm} of \texttt{mvtnorm} on \texttt{R}). {However, even if} the estimation has a good quality, the {computational cost} is very high.

Procedure \MaxT~is available with any consistent estimation of $\Sigma$. However, in practice, the quality of estimation may influence the quality of the procedure for a given number $n$ of observations. 

A natural candidate for $\hat \Sigma_n$ is the empirical covariance of observations $\mathbb X^{(n)}$. Yet, \cite{johnstone2001distribution} established that when $n$ increases while $\frac{p}{n}$ converges to a constant, on zero limit, then the empirical estimation provides a non consistent estimate because its eigenvalues do not converge to those of the covariance matrix. Several methods {have been proposed} to reduce the dimension of the estimation setting to overpass this problem, requiring assumptions of sparsity or structured matrices. 
For examples, we can cite banding methods in \cite{bickel2008regularized} and \cite{wu2009banding}, thresholding rules in \cite{bickel2008covariance} for instance, shrinkage estimation in \cite{ledoit2012nonlinear} and convex optimization techniques in \cite{banerjee2006convex}. Those estimators can be plugged into \MaxT~procedure.

\subsection{Step-down versions}

Single-step methods can be conservative. A well-known improvement is step-down method. It is a recursive algorithm which increases the power of the procedures, still preserving FWER control. See {\it e.g.} \cite{romano2005exact,goeman2010sequential}. The principle is to iterate multiple testing on the non-rejected hypothesis, as described below.

\textbf{Step-down Algorithm.}
\begin{algorithmic}
\STATE Let $\mathcal C_0 = \varnothing$ and $\mathcal C_1 = \{1,\hdots,m\};$
\STATE Initialize $j=1$. 
    \WHILE{$ \mathcal C_j \neq \mathcal C_{j-1}$}
        \STATE  Let $R_{j}$ be the {set of rejected indexes} of a given multiple testing procedure applied on tests indexes $\mathcal C_{j}$.
        \STATE Define ${\mathcal C}_{j+1}=\{i\in{\mathcal C}_{j},\;i\notin R_{j} \}$.
\STATE Do $j=j+1$.
\ENDWHILE
\RETURN ${\mathcal C}_\infty = \mathcal C_j.$
\STATE ${\mathcal C}_\infty$ is the final set of non-rejected null-hypothesis indexes. \\
\end{algorithmic}

If a single-step method provides an asymptotic control of the FWER, {then} the following proposition provides sufficient conditions under which its step-down version preserves this control. 
\begin{prop} \label{lapropo} Let $\big(R_{\mathcal C} \big)_{\mathcal C \subset \mathcal H}$ be a family of rejection sets given by multiple testing procedure. Suppose that 
\begin{itemize}
\item {For all~}  $P \in \mathcal P,$
 \begin{equation} \label{controlSSas}
  \underset{n\to+\infty}{\limsup}  \text{ FWER} \Big(R_{\mathcal H_0(P)},P \Big) \le \alpha, 
\end{equation}
\item For all $\mathcal C \subset \{1,\hdots,m\},\: \mathcal C \mapsto \mathcal R_{\mathcal C}$ is decreasing in $\mathcal C$, that is,
\begin{equation}  \label{incas} 
\forall  \mathcal C,\mathcal C^{\prime} \subseteq \{1,\hdots,m\}, \mathcal C \subseteq \mathcal C^{\prime} \Rightarrow   \mathcal R_{\mathcal C}\supseteq \mathcal R_{\mathcal C^{\prime}}. 
\end{equation}
\end{itemize}
Then, for all $P \in \mathcal P$, 
\begin{equation}\underset{n\to+\infty}{\limsup} \text{ FWER }\Big(R_{{\mathcal C}_\infty},P\Big) \le \alpha. \nonumber \end{equation} 
\end{prop}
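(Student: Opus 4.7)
The plan is to reduce the FWER of the step-down procedure to the FWER of the single-step procedure applied to the (oracle) subset $\mathcal H_0(P)$, for which control is assumed in \eqref{controlSSas}. Fix $P \in \mathcal P$ and write $\mathcal H_0 = \mathcal H_0(P)$. Let $R_{\text{SD}} = \{1,\dots,m\} \setminus \mathcal C_\infty$ denote the total rejection set of the step-down, i.e., the union $R_1 \cup R_2 \cup \cdots$ of the successive rejection sets produced by the algorithm. The goal is to show the deterministic inclusion of events
\begin{equation*}
\{R_{\text{SD}} \cap \mathcal H_0 \neq \varnothing\} \subseteq \{R_{\mathcal H_0} \cap \mathcal H_0 \neq \varnothing\},
\end{equation*}
from which $\text{FWER}(R_{\text{SD}},P) \le \text{FWER}(R_{\mathcal H_0}, P)$ follows, and the conclusion is obtained by taking $\limsup_n$ and invoking \eqref{controlSSas}.

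The core step is to exploit the monotonicity assumption \eqref{incas} at the first step where a true null gets wrongly rejected. On the event on the left-hand side, set $j^\star = \min\{ j \ge 1 : R_j \cap \mathcal H_0 \neq \varnothing \}$. By definition of $j^\star$, for all $j < j^\star$ the set $R_j$ contains only false nulls, so $R_1 \cup \cdots \cup R_{j^\star - 1} \subseteq \mathcal H_0^c$, which yields
\begin{equation*}
\mathcal H_0 \; \subseteq \; \{1,\dots,m\} \setminus (R_1 \cup \cdots \cup R_{j^\star - 1}) \; = \; \mathcal C_{j^\star}.
\end{equation*}
Applying the decreasing property \eqref{incas} with $\mathcal C = \mathcal H_0$ and $\mathcal C^{\prime} = \mathcal C_{j^\star}$ gives $R_{\mathcal C_{j^\star}} \subseteq R_{\mathcal H_0}$. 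But at step $j^\star$ the procedure is applied to $\mathcal C_{j^\star}$, so $R_{j^\star} = R_{\mathcal C_{j^\star}}$, and since $R_{j^\star} \cap \mathcal H_0 \neq \varnothing$, we deduce $R_{\mathcal H_0} \cap \mathcal H_0 \neq \varnothing$. This establishes the desired event inclusion.

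Combining the inclusion with monotonicity of probability,
\begin{equation*}
\text{FWER}(R_{\text{SD}}, P) \; = \; \P(R_{\text{SD}} \cap \mathcal H_0 \neq \varnothing) \; \le \; \P(R_{\mathcal H_0} \cap \mathcal H_0 \neq \varnothing) \; = \; \text{FWER}(R_{\mathcal H_0}, P),
\end{equation*}
and the assumed asymptotic control \eqref{controlSSas} of the single-step procedure on $\mathcal H_0$ yields $\limsup_{n \to +\infty} \text{FWER}(R_{\text{SD}}, P) \le \alpha$. I expect the only delicate point to be purely notational, namely identifying clearly what $R_{\mathcal C_\infty}$ in the statement refers to (the total rejection set produced by iterating the while loop, rather than the output of the single-step procedure applied to $\mathcal C_\infty$, which is empty by the stopping criterion). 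Once this is fixed, the argument is deterministic and the probabilistic content is entirely contained in the hypothesis \eqref{controlSSas}; no further assumption on the joint distribution of the $p$-values is needed.
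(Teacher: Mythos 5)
Your proof is correct and follows essentially the same route as the paper: both reduce the step-down FWER to that of the single-step procedure applied to $\mathcal H_0(P)$ by exploiting the monotonicity \eqref{incas}, then invoke \eqref{controlSSas}. The only difference is presentational — you argue via the first step $j^\star$ at which a true null is rejected, whereas the paper proves the contrapositive by induction (on the event that $R_{\mathcal H_0(P)}$ makes no false rejection, $\mathcal H_0(P)\subseteq\mathcal C_j$ for every $j$) — and your closing remark on the intended meaning of $R_{\mathcal C_\infty}$ correctly identifies the one genuine notational subtlety.
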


We deduce that the four methods displayed above control asymptotically the FWER for tests~\eqref{eqn:tests}.

\begin{cor}\label{thm:sdcontrol}
 For the two-sided testing problem \eqref{eqn:tests} for which the asymptotic Gaussian assumption \eqref{hypGauss} holds, 
step-down algorithm applied to Methods 1 to 4  are asymptotic FWER controlling procedures at level $\alpha$. 
\end{cor}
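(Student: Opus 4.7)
The plan is to apply Proposition~\ref{lapropo} (labelled \texttt{lapropo} in the source) to each of the four single-step methods in turn. For this I must verify the two hypotheses of that proposition for each procedure: (i)~that the single-step method, \emph{when applied to the subfamily indexed by $\mathcal H_0(P)$}, still controls FWER asymptotically at level $\alpha$; and (ii)~that the map $\mathcal C \mapsto \mathcal R_{\mathcal C}$ is antitone (inclusion-reversing).

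First I would verify hypothesis~(i). The key observation is that all arguments proving Propositions~\ref{prop:fwerbonf}, \ref{prop:fwersidak}, \ref{prop:rwcontrol} and \ref{prop:maxTcontrol} rely only on controlling the probability that at least one statistic \emph{indexed by a true null} exceeds the threshold. Replacing the ambient index set $\{1,\dots,m\}$ by $\mathcal H_0(P)$ only makes this task easier: for Bonferroni, the per-test level becomes $\alpha/m_0(P) \ge \alpha/m$; for \Sidak, one uses \Sidak's inequality~\eqref{Sid} applied to the marginal Gaussian distribution $\mathcal N_{m_0(P)}\bigl(0,(\Sigma)_{i,i'\in\mathcal H_0(P)}\bigr)$ which is valid since a principal submatrix of an invertible covariance matrix is itself a valid (invertible) covariance matrix; for \BootRW\ one reads off directly from Proposition~\ref{prop:rwcontrol} that the control involves the restriction to $\mathcal H_0(P)$; for \MaxT, one uses the same asymptotic argument applied to the submatrix, combined with the consistency of $\hat\Sigma_n$ which entails the consistency of $(\hat\Sigma_n)_{i,i'\in\mathcal H_0(P)}$.

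Next I would check hypothesis~(ii), the monotonicity property. This is essentially a computation of the threshold as a function of $|\mathcal C|$ (or of the submatrix indexed by $\mathcal C$):
\begin{itemize}
\item \textbf{Bonferroni:} the threshold on the $p$-values is $\alpha/|\mathcal C|$, which increases as $\mathcal C$ shrinks, hence more rejections.
\item \textbf{\Sidak:} the critical value is $\Phi^{-1}\bigl(\tfrac12(1-\alpha)^{1/|\mathcal C|} + \tfrac12\bigr)$, which is decreasing in $|\mathcal C|$ since $(1-\alpha)^{1/|\mathcal C|}$ is decreasing in $|\mathcal C|$; therefore the threshold on $|T_{n,i}|$ shrinks as $\mathcal C$ shrinks, giving more rejections.
\item \textbf{\BootRW\ and \MaxT:} the threshold is the $(1-\alpha)$-quantile of $\|\mathcal N_{|\mathcal C|}(0,\Sigma_{\mathcal C})\|_\infty$ (respectively with $\hat\Sigma_n$). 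If $\mathcal C \subseteq \mathcal C'$, the maximum over $\mathcal C'$ dominates stochastically the maximum over $\mathcal C$, so its $(1-\alpha)$-quantile is at least as large. Hence smaller $\mathcal C$ yields a smaller threshold, hence more rejections.
\end{itemize}
In each case $\mathcal C \subseteq \mathcal C'$ implies $\mathcal R_{\mathcal C} \supseteq \mathcal R_{\mathcal C'}$, as required.

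Once (i) and (ii) are verified for each procedure, Proposition~\ref{lapropo} yields $\limsup_n \operatorname{FWER}(R_{\mathcal C_\infty},P) \le \alpha$ for every $P \in \mathcal P$ satisfying the hypotheses of the corresponding single-step result. The only step requiring care is the monotonicity verification for \BootRW\ and \MaxT: strictly speaking, the empirical/estimated versions must inherit this monotonicity from the population versions, but since the stochastic-dominance argument is realized sample-path-wise on any given bootstrap or Monte-Carlo sample (the maximum over a larger index set is pointwise at least as large), monotonicity holds for the estimated quantiles as well. This is the main (mild) obstacle; otherwise the proof is essentially a bookkeeping exercise applying Proposition~\ref{lapropo} four times.
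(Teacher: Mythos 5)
Your proof takes essentially the same route as the paper: the paper's own proof of this corollary consists precisely of the one-line reduction to Proposition~\ref{lapropo} (``it is sufficient to verify that the four methods satisfy \eqref{controlSSas} and \eqref{incas}''), and you supply the verification that the paper leaves implicit, correctly in substance. One small slip: in the \Sidak\ case, $(1-\alpha)^{1/|\mathcal C|}$ is \emph{increasing} in $|\mathcal C|$ (since $0<1-\alpha<1$), so the critical value $c_\alpha^s$ increases with $|\mathcal C|$; it is this direction of monotonicity, not the one you state, that gives your (correct) conclusion that shrinking $\mathcal C$ lowers the threshold and enlarges the rejection set.
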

\begin{proof}
It is sufficient to verify that the four methods satisfy \eqref{controlSSas} and \eqref{incas}. 
\end{proof}

\section{Application to correlation tests}

\label{sec:correlation}

Let $\left\{Y_{1},\hdots,Y_{n} \right\}$ be independent realizations from a  random vector $Y$ with values in $\R^p$. Denote $Y=(Y^{(i)})_{i=1,\dots,p}$. Suppose $Y$ has a finite expectancy and a semi-definite positive covariance matrix. Assume also that $Y$ has finite fourth moments. Define $\Gamma=(\rho_{ij})_{i,j=1,\dots,p}$ the correlation matrix of $Y$. That is, $\rho_{ij}=\text{Cor}(Y^{(i)},Y^{(j)})$. 
Consider the two-sided testing problem:
\begin{equation} \label{letest}\tag{P-2}
 H_{0,ij} : \rho_{ij}=0  \text{~ against ~}  H_{1,ij} :  \rho_{ij}\neq 0, \quad (i,j)\in\mathcal H, 
\end{equation}
where $\mathcal H$ is the set of tested indexes, $\mathcal H=\{1\leq i <j\leq p\}$. {The $p$-value processes associated to tests~\eqref{letest} present a dependence structure which is possible to detail for different tests statistics.}  

Usual statistics proposed in literature for tests~\eqref{letest} are based on the empirical correlation. Define $\hat{\rho_{i j}}=\frac{1}{n}\sum_{\ell=1}^n (Y^{(i)}_\ell-\overline{Y^{(i)}})(Y^{(j)}_\ell-\overline{Y^{(j)}})$, the empirical correlation between $\left\{Y_{1}^{(i)},\hdots,Y_{n}^{(i)} \right\}$ and $\left\{Y_{1}^{(j)},\hdots,Y_{n}^{(j)} \right\}$, for $i,j=1,\dots,p$. The overline denotes the empirical mean, that is, $\overline{Y^{(i)}}=\frac{1}{n}\sum_{\ell=1}^n Y_\ell^{(i)}$. Denote $\hat\Gamma_n=(\hat \rho_{ij})_{1\leq i,j\leq p}$. We will focus our analysis on four test statistics:
\begin{description}
\item[Empirical statistic.]~
 \[
 T^{(1)}(\hat\rho_{ij})=\sqrt{n}\,\hat\rho_{ij}
 \]
\item[Student statistic.]~
 \[
 T^{(2)}(\hat\rho_{ij})=\sqrt{n-2}\,\frac{\hat\rho_{ij}}{\sqrt{1-\hat\rho_{ij}^2}}
 \]See {\it e.g.} Section 4.2.1 of \cite{anderson.2003.1}.

\item[Fisher statistic.]~\\
 \[
 T^{(3)}(\hat\rho_{ij})=\frac{\sqrt{n-3}}{2}\log\left(\frac{1+\hat\rho_{ij}}{1-\hat\rho_{ij}}\right)\,.
 \]
{Fisher transform is commonly used to improve} the convergence to the Gaussian distribution for univariate statistics ({\it i.e.} for fixed $(i,j)$) (see Section 4.2.3 of \cite{anderson.2003.1}).
 
\item[Second-order statistic.]~\\
For given $(i,j)\in\mathcal H$ such that $\Var(Y^{(i)})=\Var(Y^{(j)})=1$, $\hat \rho_{ij}$ is the empirical mean of $(Z_{\ell}^{(ij)})_{\ell=1,\dots ,n}$ with $Z_{\ell}^{(ij)}=(Y_{\ell}^{(i)}-\overline{Y^{(i)}})(Y^{(j)}_{\ell}-\overline{Y^{(j)}})$. Hence a test on $\hat \rho_{ij}$ can be driven using the usual test statistics on an expectation under asymptotic Gaussian assumption: 
 \begin{align*}
 T^{(4)}_{ij}=\sqrt{n}\,\frac{\overline{Z^{(ij)}}}{\sqrt{\hat\theta_{ij}}} \text{~where~} \hat\theta_{ij} &= \frac{1}{n}\sum_{\ell=1}^n \left(Z^{(ij)}_{\ell }-\frac{1}{n}\sum_{\ell'=1}^n Z^{(ij)}_{\ell'}\right)^2\,.
 \end{align*}
  The quantity $\hat\theta_{ij}$ is an estimation of \[\theta_{ij}=\Var\left[(Y^{(i)}-\E(Y^{(i)}))(Y^{(j)}-\E(Y^{(j)}))\right].\]   
When $(i,j)\in\mathcal H_0$, $\E(T^{(4)}_{ij})=0$.
{\cite{cai2016large} studied these statistics for multiple testing of correlations}.

\end{description}

Asymptotics of the empirical correlations are derived in \cite{aitkin1969some} where the author established the asymptotic normality for Gaussian distributed $Y$, and in \cite{corr_nonnormal} for non Gaussian distribution. 

\begin{prop}[\cite{aitkin1969some, corr_nonnormal}] 
\label{prop:corrgauss} 
Let $\left\{Y_{1},\hdots,Y_{n} \right\}$ be independent realizations from a  random vector $Y=(Y^{(i)})_{i=1,\dots,p}$. Suppose $Y$ has finite fourth moments.

The vector of empirical correlations $\hat\rho_{n,\cdot}=(\hat\rho_{n,h})_{h\in\mathcal H}$ is asymptotically Gaussian,
\begin{equation} 
\sqrt{n}(\hat \rho_{n,\cdot} - \rho_{\cdot})  \xrightarrow[n\to +\infty]{\quad d \quad} \mathcal N_m( 0, \Omega(\Gamma)), 
\nonumber  
\end{equation}
with $\Omega(\Gamma) = (\omega_{ij,kl})_{(ij,kl) \in\mH^2}$ given by
\begin{multline*}
\omega_{ij,kl} = \rho_{ijkl}+\frac{1}{4}\rho_{ij}\rho_{kl} \Big( \rho_{iikk} + \rho_{iill}+ \rho_{jjkk}+ \rho_{jjll}\Big)\\  -\frac{1}{2}\rho_{ij}  \Big( \rho_{iikl} + \rho_{jjkl}\Big) -\frac{1}{2}\rho_{kl}  \Big( \rho_{ijkk} + \rho_{ijll}\Big),
\end{multline*}
where for all $i,j,k,l$, \[
\rho_{ijkl}=\frac{\E\left[(Y^{(i)}-\E(Y^{(i)}))(Y^{(j)}-\E(Y^{(j)}))(Y^{(k)}-\E(Y^{(k)}))(Y^{(l)}-\E(Y^{(l)}))\right]}{\sqrt{\mathrm{Var}(Y^{(i)})\mathrm{Var}(Y^{(j)})\mathrm{Var}(Y^{(k)})\mathrm{Var}(Y^{(l)})}}.
\]

In particular, when $Y$ is Gaussian, $\Omega(\Gamma) = (\omega_{ij,kl})_{(ij,kl) \in\mH^2}$ satisfies
\begin{align*}
\label{eqn:omega}
\omega_{ij,ij} &= \Big ( 1 - \rho_{ij}^2 \Big )^2,\\
\omega_{ij,il} &= -\frac{1}{2} \rho_{ij} \rho_{il}\Big(1-\rho_{ij}^2-\rho_{il}^2-\rho_{jl}^2  \Big) + \rho_{jl} \Big ( 1-\rho_{ij}^2-\rho_{il}^2\Big ), \quad \text{for~} j\neq l\\
\omega_{ij,kl} &= \frac{1}{2}\rho_{ij}\rho_{kl} \Big( \rho_{ik}^2 + \rho_{il}^2+ \rho_{jk}^2+ \rho_{jl}^2\Big) +  \rho_{ik} \rho_{jl}+ \rho_{il} \rho_{jk}\\
&\qquad  - \rho_{ik} \rho_{jk} \rho_{kl}- \rho_{ij} \rho_{ik} \rho_{il}- \rho_{ij} \rho_{jk} \rho_{jl}- \rho_{il} \rho_{jl} \rho_{kl}, \quad \text{for~} i\neq k \text{~and~} j\neq l.
\end{align*}

\end{prop}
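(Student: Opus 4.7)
The plan is to derive asymptotic normality by writing each $\hat\rho_{ij}$ as a smooth function of empirical (raw) moments, and then combining the multivariate central limit theorem with the delta method. Without loss of generality I would reduce to the centered case $\E(Y^{(i)})=0$, since the empirical centering only perturbs things by $O_{\P}(1/\sqrt n)$ and is absorbed by the delta method under the finite fourth-moment hypothesis. Consider then the vector of empirical second moments $\hat m_{ij}=\frac{1}{n}\sum_\ell Y^{(i)}_\ell Y^{(j)}_\ell$ for $i\le j$ together with the empirical means. By the multivariate CLT applied to the i.i.d.\ vectors of products, $\sqrt n$ times the deviations of these statistics from their population counterparts converges to a centered Gaussian whose covariances are the fourth-order central moments of $Y$, i.e.\ quantities of the form $\E[Y^{(i)}Y^{(j)}Y^{(k)}Y^{(l)}]-\E[Y^{(i)}Y^{(j)}]\,\E[Y^{(k)}Y^{(l)}]$.

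Next, each empirical correlation is $\hat\rho_{ij}=g_{ij}(\hat m,\overline Y)$, where $g_{ij}$ is the smooth map
$(m_{ij}-\overline{Y^{(i)}}\,\overline{Y^{(j)}})/\sqrt{(m_{ii}-\overline{Y^{(i)}}^2)(m_{jj}-\overline{Y^{(j)}}^2)}.$
Applying the delta method at the population moments yields asymptotic normality of $\sqrt n(\hat\rho_{n,\cdot}-\rho_{\cdot})$ with covariance $J\,\Xi\, J^\top$, where $\Xi$ is the CLT covariance just described and $J$ is the Jacobian of the family $(g_{ij})$. A direct computation of the partial derivatives at the population moments, followed by substitution, produces the stated expression for $\omega_{ij,kl}$; in particular the four cross-terms in $\rho_{iikk},\rho_{iill},\rho_{jjkk},\rho_{jjll}$ come from differentiating $g_{ij}$ with respect to the four diagonal moments, and share the common factor $\frac14\rho_{ij}\rho_{kl}$.

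For the Gaussian special case I would then invoke Isserlis's (Wick's) formula: when $Y$ is centered Gaussian with unit variances, $\rho_{ijkl}=\rho_{ij}\rho_{kl}+\rho_{ik}\rho_{jl}+\rho_{il}\rho_{jk}$. Substituting this identity into the general formula for $\omega_{ij,kl}$ and using $\rho_{ii}=1$ collapses everything into expressions involving only the pairwise correlations. A case split according to the intersection pattern of $\{i,j\}$ and $\{k,l\}$ (fully coincident, exactly one shared index, or disjoint) then yields respectively the three displayed formulas for $\omega_{ij,ij}$, $\omega_{ij,il}$, and the disjoint case.

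The principal obstacle is purely algebraic: the delta-method Jacobian already generates many monomials in the $\rho$'s, and applying Wick in the Gaussian case multiplies the number of terms through the three-term pairing expansion, so verifying the stated closed forms requires careful bookkeeping rather than any new idea. Nothing conceptual beyond the CLT, the delta method, and Wick's rule is needed, which is why this result is essentially classical; in the paper I would simply refer to \cite{aitkin1969some} and \cite{corr_nonnormal} for the detailed computation instead of reproducing it.
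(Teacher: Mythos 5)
The paper gives no proof of this proposition: it is stated as a quoted result and attributed to \cite{aitkin1969some} and \cite{corr_nonnormal}, which carry out exactly the computation you sketch. Your outline --- multivariate CLT on the empirical second moments (the means contributing nothing since the Jacobian of $g_{ij}$ with respect to them vanishes at the centered population point), delta method to get $J\,\Xi\,J^\top$, and Isserlis' formula $\rho_{ijkl}=\rho_{ij}\rho_{kl}+\rho_{ik}\rho_{jl}+\rho_{il}\rho_{jk}$ for the Gaussian specialization --- is the standard and correct route, and matches what the cited references do.
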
  

Asymptotic distributions of test statistics $\{ (T_{ij}^{(k)})_{1\leq i <j\leq n}, k=1,\dots,4\}$ follow.
\begin{cor} \label{cor:stat}
 For all $k=1,\dots,3$, vector statistics $(T^{(k)}(\hat\rho_{ij}))_{1\leq i <j\leq p}$ converge in distribution to a Gaussian random variable with covariance matrix $\Omega^{(k)}(\Gamma)$,
\[
(T^{(k)}(\hat\rho_{ij})-T^{(k)}(\rho_{ij}))_{1\leq i <j\leq p} \mathop{\longrightarrow}^{\quad d \quad} \mathcal N_m( 0, \Omega^{(k)}(\Gamma)), \quad \text{when~}{n\to +\infty},
 \]
 where \begin{itemize}[topsep=-5pt]
\item[] $\Omega^{(1)}(\Gamma)=\Omega(\Gamma)$ defined in Proposition~\ref{prop:corrgauss}, 
\item[] $\Omega^{(2)}(\Gamma)=(\omega_{ij,kl}^{(2)})$ with $\omega_{ij,kl}^{(2)}= \omega_{ij,kl}/\left((1-\rho_{ij}^2)(1-\rho_{kl}^2)\right)^{3/2}$,
\item[] $\Omega^{(3)}(\Gamma)=(\omega_{ij,kl}^{(3)})$ with $\omega_{ij,kl}^{(3)}= \omega_{ij,kl}/\left({(1-\rho_{ij}^2)(1-\rho_{kl}^2)}\right)$.
\end{itemize}
\end{cor}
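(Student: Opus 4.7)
The plan is to reduce everything to Proposition~\ref{prop:corrgauss} via the multivariate delta method applied componentwise. For each $k\in\{1,2,3\}$ the statistic $T^{(k)}(\hat\rho_{ij})$ has the form $a_n \, f_k(\hat\rho_{ij})$ for a smooth scalar function $f_k$ and a deterministic scalar $a_n$ with $a_n/\sqrt n\to 1$ (namely $a_n=\sqrt n$, $\sqrt{n-2}$, or $\sqrt{n-3}$). Writing the vector $\hat\rho_{n,\cdot}=(\hat\rho_{n,ij})_{(i,j)\in\mathcal H}$ and applying $f_k$ entrywise, one gets
\begin{equation*}
T^{(k)}(\hat\rho_{ij})-T^{(k)}(\rho_{ij}) \;=\; \frac{a_n}{\sqrt n}\cdot \sqrt n\bigl(f_k(\hat\rho_{ij})-f_k(\rho_{ij})\bigr),
\end{equation*}
so by Slutsky it suffices to establish the joint weak limit of $\sqrt n(f_k(\hat\rho_{n,\cdot})-f_k(\rho_{\cdot}))$.

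First I would invoke Proposition~\ref{prop:corrgauss} to get $\sqrt n(\hat\rho_{n,\cdot}-\rho_\cdot)\to\mathcal N_m(0,\Omega(\Gamma))$. Then I would apply the multivariate delta method to the map $F_k:\R^m\to\R^m$ defined by $F_k(x)_{ij}=f_k(x_{ij})$. Because $F_k$ acts coordinatewise, its Jacobian at $\rho_\cdot$ is the diagonal matrix $J_k=\mathrm{diag}\bigl(f_k'(\rho_{ij})\bigr)_{(i,j)\in\mathcal H}$, and the limiting covariance is $J_k\,\Omega(\Gamma)\,J_k^\top$, whose $(ij,kl)$ entry equals $f_k'(\rho_{ij})\,\omega_{ij,kl}\,f_k'(\rho_{kl})$.

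The only concrete computation left is to identify the scalar derivatives. For $k=1$, $f_1(\rho)=\rho$, so $f_1'\equiv 1$ and the covariance is $\Omega(\Gamma)$, giving $\Omega^{(1)}$. For $k=2$, $f_2(\rho)=\rho/\sqrt{1-\rho^2}$ gives
\begin{equation*}
f_2'(\rho)=\frac{(1-\rho^2)+\rho^2}{(1-\rho^2)^{3/2}}=\frac{1}{(1-\rho^2)^{3/2}},
\end{equation*}
which reproduces $\omega^{(2)}_{ij,kl}=\omega_{ij,kl}/\bigl((1-\rho_{ij}^2)(1-\rho_{kl}^2)\bigr)^{3/2}$. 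For $k=3$, the Fisher transform $f_3(\rho)=\tfrac12\log\tfrac{1+\rho}{1-\rho}$ has $f_3'(\rho)=1/(1-\rho^2)$, yielding $\omega^{(3)}_{ij,kl}=\omega_{ij,kl}/\bigl((1-\rho_{ij}^2)(1-\rho_{kl}^2)\bigr)$.

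There is no real obstacle here: the argument is a textbook delta-method application, and the componentwise nature of $F_k$ is what makes the Jacobian diagonal and the covariance formula a simple rescaling of $\Omega(\Gamma)$. The most error-prone step is the algebra of the scalar derivatives $f_k'$ and the verification that the $\sqrt{n-2}/\sqrt n$ and $\sqrt{n-3}/\sqrt n$ prefactors are absorbed by Slutsky's lemma; both are routine and can be checked directly.
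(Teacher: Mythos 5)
Your proof is correct and follows exactly the route the paper takes: the paper's own proof is the one-line remark that $\Omega^{(2)}$ and $\Omega^{(3)}$ follow from the delta method, and your argument simply spells out that application (coordinatewise maps with diagonal Jacobian, the derivatives $f_2'(\rho)=(1-\rho^2)^{-3/2}$ and $f_3'(\rho)=(1-\rho^2)^{-1}$, and Slutsky to absorb the $\sqrt{n-2}/\sqrt{n}$ and $\sqrt{n-3}/\sqrt{n}$ factors). No gaps.
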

\begin{proof}
Expressions of $\Omega^{(2)}(\Gamma)$ and $\Omega^{(3)}(\Gamma)$ result from the Delta method \citep{wasserman2013all}.
\end{proof}

{We can now state an equivalent result for Second-order statistics.}
\begin{prop}
\label{prop:gaussian}
Let $\left\{Y_{1},\hdots,Y_{n} \right\}$ be independent realizations from a  random vector $Y=(Y^{(i)})_{i=1,\dots,p}$. Suppose $Y$ has finite fourth moments.

Vector statistics $(T^{(4)}_{ij})_{1\leq i <j\leq p}$ converge in distribution to a Gaussian random variable with covariance matrix $\Omega^{(4)}(\Gamma)$,
\[
(T^{(4)}_{ij}-\E(T^{(4)}_{ij}))_{1\leq i <j\leq p} \mathop{\longrightarrow}^{\quad d \quad} \mathcal N_m( 0, \Omega^{(4)}(\Gamma)), \quad \text{when~}{n\to +\infty},
\]
  where $\Omega^{(4)}(\Gamma)=(\omega_{ij,kl}^{(4)})$ with $\omega_{ij,kl}^{(4)}= \rho_{ijkl}/\left({(\rho_{ijij}-\rho_{ij}^2)(\rho_{kl kl}-\rho_{kl}^2)}\right)^{1/2}$.
  
When $Y$ is Gaussian, $\Omega^{(4)}(\Gamma)$ satisfies \[\omega_{ij,kl}^{(4)}= \frac{\rho_{ij}\rho_{kl}+\rho_{ik}\rho_{jl}+\rho_{il}\rho_{kj}}{\left({(1+\rho_{ij}^2)(1+\rho_{kl}^2)}\right)^{1/2}}.\]
In particular, for all $(i,j)\in\mathcal H$, $\omega^{(4)}_{ij,ij}=1$.
\end{prop}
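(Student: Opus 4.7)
The plan is to treat $T^{(4)}_{ij}$ as a studentized sample mean, apply a multivariate CLT to the numerator and the law of large numbers to the denominator, and combine via Slutsky's theorem. The only slightly nonstandard point is that the summands $Z_{\ell}^{(ij)}$ are built from the \emph{empirical} means $\overline{Y^{(i)}},\overline{Y^{(j)}}$ rather than the population means, so I would first linearize this away.

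Write $\mu_i = \E(Y^{(i)})$ and $W_\ell^{(ij)} = (Y_\ell^{(i)}-\mu_i)(Y_\ell^{(j)}-\mu_j)$. A direct expansion yields
\[
\overline{Z^{(ij)}} \;=\; \overline{W^{(ij)}} \;-\; (\overline{Y^{(i)}} - \mu_i)(\overline{Y^{(j)}} - \mu_j),
\]
so the correction term is $O_{\mathbb P}(1/n)$ and $\sqrt n\,(\overline{Z^{(ij)}} - \rho_{ij}) = \sqrt n\,(\overline{W^{(ij)}} - \rho_{ij}) + o_{\mathbb P}(1)$. The i.i.d.\ vectors $(W_\ell^{(ij)})_{(i,j)\in\mathcal H}$ have finite covariance matrix $V_{ij,kl} = \rho_{ijkl}-\rho_{ij}\rho_{kl}$ under the fourth-moment hypothesis, so the multivariate CLT gives
\[
\sqrt n \bigl( \overline{Z^{(\cdot)}} - \rho_\cdot \bigr) \; \xrightarrow{\, d \,} \; \mathcal N_m(0, V).
\]
An analogous expansion of $\hat\theta_{ij}$ into empirical moments of $Y$, combined with the law of large numbers and the continuous mapping theorem, produces $\hat\theta_{ij} \overset{\mathbb P}{\longrightarrow} \theta_{ij} = \rho_{ijij} - \rho_{ij}^2$. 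Slutsky's theorem and the Cramér--Wold device then yield the joint limit
\[
\bigl(T^{(4)}_{ij} - \sqrt n\,\rho_{ij}/\sqrt{\theta_{ij}}\bigr)_{(i,j)\in\mathcal H} \;\xrightarrow{\, d \,}\; \mathcal N_m\bigl(0,\Omega^{(4)}(\Gamma)\bigr),
\]
with $\Omega^{(4)}(\Gamma)_{ij,kl} = V_{ij,kl}/\sqrt{\theta_{ij}\theta_{kl}}$, which is the announced form.

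For the Gaussian specialization, Isserlis' theorem gives $\rho_{ijkl} = \rho_{ij}\rho_{kl}+\rho_{ik}\rho_{jl}+\rho_{il}\rho_{jk}$, and in particular $\rho_{ijij} = 1 + 2\rho_{ij}^2$; substituting into the general formula reproduces the stated closed form, and the diagonal entries collapse to $1$. The main obstacle is interpretive rather than analytic: one has to clarify what $\E[T^{(4)}_{ij}]$ means in the statement — for finite $n$ the expectation of the ratio need not even exist — and check that the natural deterministic centering $\sqrt n\,\rho_{ij}/\sqrt{\theta_{ij}}$ (which vanishes on $\mathcal H_0$, the only regime used for testing) is the intended one. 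Everything else is routine bookkeeping; no uniformity across coordinates is required since $m = p(p-1)/2$ is fixed as $n\to\infty$.
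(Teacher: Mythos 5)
Your route --- multivariate CLT for the averaged products, law of large numbers for $\hat\theta_{ij}$, Slutsky plus Cram\'er--Wold for the joint limit --- is exactly the paper's (much terser) argument, and your linearization $\overline{Z^{(ij)}}=\overline{W^{(ij)}}-(\overline{Y^{(i)}}-\mu_i)(\overline{Y^{(j)}}-\mu_j)$ is a detail the paper skips over. The problem is your closing assertion that the result ``is the announced form'': it is not. The covariance you (correctly) compute for the linearized numerator is $V_{ij,kl}=\mathrm{Cov}(W^{(ij)},W^{(kl)})=\rho_{ijkl}-\rho_{ij}\rho_{kl}$, so your limit has entries $(\rho_{ijkl}-\rho_{ij}\rho_{kl})/\sqrt{\theta_{ij}\theta_{kl}}$, whereas the proposition states $\rho_{ijkl}/\sqrt{\theta_{ij}\theta_{kl}}$; the paper identifies the asymptotic covariance with the second moment $\E(Z^{(ij)}Z^{(kl)})$ rather than with the covariance. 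The two agree only when $\rho_{ij}\rho_{kl}=0$, i.e.\ on the null. Note that it is your version, not the displayed one, that is consistent with the final claim $\omega^{(4)}_{ij,ij}=1$ for all $(i,j)$: in the Gaussian case the stated general formula gives $(1+2\rho_{ij}^2)/(1+\rho_{ij}^2)$ on the diagonal. Likewise, Isserlis applied to your $V$ yields the two-term numerator $\rho_{ik}\rho_{jl}+\rho_{il}\rho_{jk}$, not the three-term numerator in the proposition. You must say explicitly that your formula differs from the stated one off the null (and arguably corrects it), rather than asserting agreement.

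A second, related gap concerns the centering you propose. With the deterministic centering $\sqrt n\,\rho_{ij}/\sqrt{\theta_{ij}}$ one has
\begin{equation*}
T^{(4)}_{ij}-\frac{\sqrt n\,\rho_{ij}}{\sqrt{\theta_{ij}}}=\frac{\sqrt n\,\bigl(\overline{Z^{(ij)}}-\rho_{ij}\bigr)}{\sqrt{\hat\theta_{ij}}}+\sqrt n\,\rho_{ij}\Bigl(\hat\theta_{ij}^{-1/2}-\theta_{ij}^{-1/2}\Bigr),
\end{equation*}
and the second term is not $o_{\mathbb P}(1)$ when $\rho_{ij}\neq 0$ (generically $\hat\theta_{ij}-\theta_{ij}$ is of order $n^{-1/2}$, so this term is of order one). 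Slutsky applied only to the first term therefore does not give your displayed joint limit off the null; a delta-method contribution from the studentization would have to be added to the covariance. On $\mathcal H_0$ --- the only regime actually used for FWER control, and the regime in which the paper's formula and yours coincide --- both issues disappear and your argument is complete.
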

\begin{proof}
First, the law of large number ensures that for all $(i,j)\in \mathcal H$, $\hat \theta_{ij}$ converges almost surely to $\theta_{ij}$. {Applying Slutsky's Theorem (\cite{gut2012probability}, Theorem 11.4) gives the asymptotic distribution of}$\sqrt{n}\frac{\overline{Z^{(i,j)}}}{\hat \theta_{ij}}$. Applying the continuity theorem on the characteristic functions, we deduce that the asymptotic distribution is Gaussian. The expression of $\Omega^{(4)}$ then results from the fact that $\theta_{ij}=(\rho_{ijij}-\rho_{ij}^2)\sqrt{\Var(Y_i)\Var(Y_j)}$ and that $\E(Z^{(ij)}Z^{(kl)})=\rho_{ijkl}\sqrt{\Var(Y_i)\Var(Y_j)\Var(Y_k)\Var(Y_l)}$.
\end{proof}

To conclude, {by} Corollary~\ref{cor:stat}{,} procedures of Section~\ref{sec:procedures} {can be applied}. Single-step and step-down Methods~1--4 thus control asymptotically FWER.  Methods~1--3  are non parametric, while Method~4 {requires} an explicit formula of the asymptotic covariance of tests statistics. In particular, the matrix $\Sigma$ in this method corresponds to $\Omega^{(k)}(\Gamma)$ given in Corollary~\ref{cor:stat} and Proposition~\ref{prop:gaussian}. Method~4, \MaxT, can then be applied {by estimating quantiles using plug-in estimate of $\Omega^{(k)}(\Gamma)$}. {This consists simply in plugging in $\hat \Gamma_n$ for $\Gamma$, where $\hat \Gamma_n$ is an estimator of $\Gamma$.} In the general case, such estimators need finite eight moments. 

When the correlations are evaluated on Gaussian-distributed samples, for all $k=1,\dots,4$, matrices $\Omega^{(k)}(\Gamma)$ are explicitly given with respect to the correlation matrix $\Gamma$. Matrices $\Omega^{(k)}(\Gamma)$ can thus be estimated by $\Omega^{(k)}(\hat\Gamma_n)$, where $\hat \Gamma_n$ is an estimator of {$\Gamma$}. In the following, we will consider that $\hat \Gamma_n$ is the empirical correlation matrix. As previously, it is worth noticing that other estimators may have better performances{. Obviously, it is possible to }plug-in a matrix obtained {\it e.g.} by shrinkage estimation, banding methods, thresholding rules\dots

{We can now state an adaptation of Method~4.}

\renewcommand\themeth{4'}
\begin{meth}[\MaxT~for correlation tests~\eqref{letest}]  \label{meth4bis} Assume $Y$ is Gaussian. Consider $k=1,\dots,4$. Let $t_{n,\alpha}(\hat \Sigma^{(k)}_n)$ be the $(1-\alpha)$-quantile of the distribution $\mathcal L\Big (\| \mathcal N_m(0,\hat \Sigma^{(k)}_n)\|_{\infty} \Big)$, with $\hat\Sigma^{(k)}_n=\Omega^{(k)}(\hat\Gamma_n)$. The \MaxT~ multiple testing procedure is defined by
\begin{equation}
\mR^{\MaxT}_\alpha = \Big \{ (i,j) \in \mathcal H: |T^{(k)}(\hat\rho_{ij})| > t_{n,\alpha}^{(k)} \big(\hat \Sigma^{(k)} \big) \Big \},
\end{equation}
where $t_{n,\alpha}^{(k)} \big(\hat \Sigma^{(k)}_n \big)$ is computed using (simulated) samples of $\mathcal N_m(0,\hat \Sigma^{(k)}_n)$.
\end{meth}

Lemma~\ref{lem:corDcor} below then ensures that Method~\ref{meth4bis} controls asymptotically FWER.

\begin{lem} \label{lem:corDcor}
Assume $Y$ is Gaussian. For $k=1,\dots,4$, $\Omega^{(k)}(\hat\Gamma_n)$ tends in probability to $\Omega^{(k)}(\Gamma)$ when $n$ goes to infinity.
\end{lem}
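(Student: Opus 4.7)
The plan is to combine the consistency of the empirical correlation matrix with the continuous mapping theorem, using the explicit Gaussian formulas for $\Omega^{(k)}(\Gamma)$ given just before the lemma and in Proposition~\ref{prop:corrgauss}.

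First, I would establish entrywise consistency of $\hat\Gamma_n$. Since $Y$ is Gaussian it has finite moments of all orders, and in particular finite fourth moments, so the law of large numbers applied to the sample means $\overline{Y^{(i)}}$, the sample second moments $\frac{1}{n}\sum_\ell Y^{(i)}_\ell Y^{(j)}_\ell$ and the sample variances, combined with Slutsky's theorem, gives
\[
\hat\rho_{n,ij} \xrightarrow[n\to +\infty]{\mathbb P} \rho_{ij}\quad \text{for all } (i,j),
\]
hence $\hat\Gamma_n \xrightarrow{\mathbb P} \Gamma$ as a matrix.

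Next, I would argue that under the Gaussian assumption each map $\Gamma \mapsto \Omega^{(k)}(\Gamma)$, $k=1,\dots,4$, is continuous at the true $\Gamma$. Inspecting the explicit expressions: $\Omega^{(1)}(\Gamma)$ is a polynomial in the entries of $\Gamma$; $\Omega^{(2)}(\Gamma)$ and $\Omega^{(3)}(\Gamma)$ are rational in the $\rho_{ij}$'s with denominators of the form $((1-\rho_{ij}^2)(1-\rho_{kl}^2))^{\gamma}$ ($\gamma=3/2$ or $1$), which are strictly positive because $Y$ is non-degenerate Gaussian (so $|\rho_{ij}|<1$, which also matches the invertibility hypothesis on $\Sigma$); and $\Omega^{(4)}(\Gamma)$ has denominator $((1+\rho_{ij}^2)(1+\rho_{kl}^2))^{1/2}\ge 1$. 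Each $\Omega^{(k)}$ is therefore a $C^\infty$ function of $\Gamma$ on an open neighborhood of the true matrix.

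Finally, I would apply the continuous mapping theorem entrywise: since $\hat\Gamma_n \xrightarrow{\mathbb P}\Gamma$ and $\Omega^{(k)}$ is continuous at $\Gamma$,
\[
\Omega^{(k)}(\hat\Gamma_n) \xrightarrow[n\to +\infty]{\mathbb P} \Omega^{(k)}(\Gamma), \qquad k=1,\dots,4.
\]
The only non-routine point is verifying that the denominators in the definitions of $\Omega^{(2)}$ and $\Omega^{(3)}$ stay bounded away from zero in a neighborhood of $\Gamma$; this reduces to the Gaussian non-degeneracy assumption (equivalently, $|\rho_{ij}|<1$ strictly), and no other obstacle appears.
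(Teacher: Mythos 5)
Your proposal is correct and follows essentially the same route as the paper: consistency of $\hat\Gamma_n$ followed by the continuous mapping theorem applied to $\Gamma\mapsto\Omega^{(k)}(\Gamma)$. The only cosmetic difference is that the paper deduces $\hat\Gamma_n\overset{\mathbb P}{\longrightarrow}\Gamma$ from the asymptotic normality of Proposition~\ref{prop:corrgauss} via Slutsky rather than from the law of large numbers, and it leaves implicit the continuity check on the denominators of $\Omega^{(2)}$, $\Omega^{(3)}$, $\Omega^{(4)}$ that you spell out.
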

 \begin{proof}
Proposition~\ref{prop:corrgauss} ensures that $\sqrt{n}(\hat \Gamma_n - \Gamma)  \xrightarrow[n\to +\infty]{\quad d \quad} \mathcal N_m(0,\Omega(\Gamma))  $. Since $\frac{1}{\sqrt{n}} \xrightarrow[n\to +\infty]{\quad\P \quad} 0$, Slutsky's theorem (\cite{gut2012probability}, Theorem 11.4) implies that $\hat \Gamma_n- \Gamma \overset{\mathbb P}{\longrightarrow}  0$, that is, $\hat \Gamma_n \overset{\mathbb P}{\longrightarrow}  \Gamma$ when $n$ goes to infinity. We conclude using the continuous mapping Theorem (\cite{gut2012probability}, Theorem 10.3).
 \end{proof}

{Remark} that all our results can be extended immediately to multiple testing of partial correlation (see \cite{roux}, Section 5.2.4). Indeed, similar convergence as in Proposition~\ref{prop:corrgauss} exists for partial correlation  \citep{aitkin1969some}. Statistics in Methods 1-4 can be rewritten using partial correlation {and} asymptotic normality also holds.

\section{Simulations}

\label{sec:simu}

Correlation matrices can be used to define adjacency matrices of dependence graphs, where nodes correspond to variables and edges correspond to non zero of adjacency matrices (see {\it e.g.} \cite{drton2007multiple}). 
In practice, empirical correlations are often full matrices, while dependence graphs are expected to have a smaller set of edges, see e.g. \cite{kolaczyk2009statistical,newman2010networks}.
Thus multiple testing on correlations can be used for a better graph estimation, where only significant correlations $\rho_{ij}$ give an edge between nodes $i$ and $j$. A prior in many graph estimations is the sparsity of the estimated graph, e.g. in Graphical Lasso, \cite{Friedman2008}. However, this crucial point may not be satisfied in real data applications, for instance in neuroscience \citep{markov2013cortical}.

Simulations are conducted to evaluate the behavior of the proposed approaches described in previous parts \ref{sec:procedures} and \ref{sec:correlation}. Indeed, we showed that the FWER is controlled asymptotically and that the choice of the statistics may influence the asymptotic control. We conduct a precise and careful evaluation of the proposed methods by computing the number of falsely detected edges (FWER) and correctly identified edges (power -- also named sensitivity). Three main parameters of the methods are varied: sparsity of the true graphs, size of the sample, and signal to noise ratio. After presenting the graph model chosen in our simulations, the results are displayed for FWER and latter for power.       

Simulations were done on \texttt{R} with \texttt{igraph} and \texttt{TestCor} packages, \cite{igraph, TestCor}.

\subsection{Choice of models}

\label{sec:simu-model}

Our simulation procedure requires the simulation of Gaussian data with a given covariance matrix defined in order to match a given graph structure. In addition, the covariance matrix has to be positive definite. It is rather simple to get an adjacency matrix following a network model such as small-world or scale-free \citep{newman2010networks}. However, it is difficult to guarantee its positive definiteness theoretically, see \cite{guillot2011posdef}. Starting from a network model, we propose to generate a corresponding adjacency matrix, denoted $\mathbf A$. One way to {guarantee} the positive definiteness of the matrix is to control the eigenvalues. This is the purpose of the following lemma \ref{lem.posdef}.

From the adjacency matrix $\mathbf A$ we build a correlation matrix $\mathbf M$ with a constant non zero value $\rho$ by \[\mathbf M= \mathbf I+\rho \mathbf A,\] where $\mathbf I$ is the identity matrix, and $\rho\in(-1,1)$. 
\begin{lem}
  \label{lem.posdef}
{Let $\mathbf M$ satisfy $\mathbf M=\mathbf I+\rho \mathbf A$ with $\rho$ a real.} 
Then, $\mathbf M$ is positive definite if and only if $|\rho|<\frac{1}{|\lambda_{min}|}$, where $\lambda_{min}$ is the smallest eigenvalue of $\mathbf A$.  
\end{lem}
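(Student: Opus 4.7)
The plan is to diagonalise $\mathbf A$ via the spectral theorem and read off the spectrum of $\mathbf M$ directly. Since $\mathbf A$ is real symmetric (being the adjacency matrix of an undirected graph), we may write $\mathbf A = U D U^{\top}$ with $U$ orthogonal and $D = \mathrm{diag}(\lambda_1,\dots,\lambda_p)$ the diagonal matrix of its real eigenvalues. Then
\begin{equation*}
\mathbf M \;=\; \mathbf I + \rho\, \mathbf A \;=\; U\bigl(\mathbf I + \rho D\bigr)U^{\top},
\end{equation*}
so the eigenvalues of the symmetric matrix $\mathbf M$ are exactly the numbers $1+\rho\lambda_i$, $i=1,\dots,p$. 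Consequently $\mathbf M$ is positive definite if and only if $1+\rho\lambda_i>0$ for every $i\in\{1,\dots,p\}$.

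The second step is to translate this family of scalar inequalities into a single bound on $|\rho|$. Because $\mathbf A$ has a zero diagonal we have $\mathrm{tr}(\mathbf A)=\sum_i \lambda_i=0$, so unless $\mathbf A=0$ (a trivial case in which $\mathbf M=\mathbf I$ is automatically positive definite for every $\rho$) the smallest eigenvalue $\lambda_{min}$ is strictly negative. Rewriting the conditions as $\rho\lambda_i>-1$ for all $i$ and selecting the binding index according to the sign of $\rho$ reduces the system to an inequality driven by $\lambda_{min}$, which after dividing by the negative quantity $\lambda_{min}$ gives the announced equivalence $|\rho|<1/|\lambda_{min}|$.

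The argument is a direct spectral computation and I do not foresee any real obstacle. The only point that deserves a line of care is the sign bookkeeping when dividing the scalar inequality $\rho\lambda_{min}>-1$ by the negative number $\lambda_{min}$, which reverses the direction of the inequality and yields the correct one-sided bound before symmetrising in $\rho$.
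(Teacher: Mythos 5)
Your first step is correct and is almost certainly the intended argument (the paper itself offers no proof of this lemma): $\mathbf A$ is real symmetric, so $\mathbf M = U(\mathbf I+\rho D)U^{\top}$ and $\mathbf M$ is positive definite if and only if $1+\rho\lambda_i>0$ for every eigenvalue $\lambda_i$ of $\mathbf A$. The gap sits exactly in the ``sign bookkeeping'' you defer to a single line. For $\rho>0$ the binding constraint is indeed $\lambda_{min}$ and you obtain $\rho<1/|\lambda_{min}|$. But for $\rho<0$ the binding constraint is the \emph{largest} eigenvalue $\lambda_{max}$, and the system reduces to $|\rho|<1/\lambda_{max}$, not to an inequality ``driven by $\lambda_{min}$''. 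Since $\mathbf A$ is a nonnegative symmetric matrix, Perron--Frobenius gives $\lambda_{max}\ge|\lambda_{min}|$, with strict inequality for connected non-bipartite graphs, so $1/\lambda_{max}<1/|\lambda_{min}|$ and the two conditions are genuinely different. Concretely, for the triangle $K_3$ the spectrum of $\mathbf A$ is $\{2,-1,-1\}$, hence $1/|\lambda_{min}|=1$; yet $\rho=-0.6$ satisfies $|\rho|<1$ while $\mathbf M$ has the eigenvalue $1-1.2=-0.2<0$. The ``if'' direction of the claimed equivalence therefore fails, and no amount of careful division by $\lambda_{min}$ will recover it.

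In other words, the statement is only an equivalence when $\lambda_{max}=|\lambda_{min}|$ (e.g.\ bipartite graphs such as the chain graph the paper invokes) or when one restricts to $\rho\ge 0$, which is what the simulations actually use ($\rho\in\{0.1,0.2\}$). Your proof cannot be completed as written: either restrict the claim to $\rho\ge 0$, where your argument goes through verbatim, or replace the conclusion by the correct characterization $-1/\lambda_{max}<\rho<1/|\lambda_{min}|$ (vacuous when $\mathbf A=0$). Only the ``only if'' half of the stated lemma survives in full generality, since positive definiteness forces both $1+\rho\lambda_{min}>0$ and $1+\rho\lambda_{max}>0$, and the latter already implies $|\rho|<1/\lambda_{max}\le 1/|\lambda_{min}|$ when $\rho<0$.
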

For regular graphs, it is shown that $\lambda_{min}< -1/r$ independently on the size of the graph, where $r$ is the valence of the graph \citep{cvetkovic2004spectral}.
In particular, for a chain graph, $|\rho|<0.5$ always gives a positive definite matrix $\mathbf M$. {It is getting} more complicated
for small-world or scale-free models \citep{cvetkovic2004spectral}. A classical approach to ensure positive definiteness is diagonal dominance (see {\it e.g.} \cite{SchaferStrimmer} with precision matrices). Our model is quite similar, but considers constant non-zero entries in the correlation matrix.

In the sequel, we focus on Stochastic Block Models \citep{SBM}. We consider a model where the number of nodes $p$ is equal to 26, with two community of size $p/2=13$. The adjacency matrix $\mathbf A$ is defined as 
\[\mathbf{A}=
  \begin{tikzpicture}[baseline=(current bounding box.center),
                      large/.style={font=\large}]
    \matrix (M)[matrix of math nodes, nodes in empty cells,
                left delimiter={(}, right delimiter={)},
                column sep={3em,between origins},
                row sep={2em,between origins}
    ]{   &   &         &&&&     \\
          &     &   &     &  & & \\
          &    &        &&&&     \\
           &    &   &   &&&        \\
            &    &      &&&&       \\
       };
    \node[large] at (M-2-2){$\bigl(a_{i,j}\bigr)_{\subalign{i&=1,\dots,p/2\\ j&=1,\dots,p/2}}$};
    \node[large] at (M-2-6){$\bigl(b_{i,j}\bigr)_{\subalign{i&=p/2+1,\dots,p\\ j&=1,\dots,p/2}}$};
    \node[large] at (M-4-6){$\bigl(a_{i,j}\bigr)_{\subalign{i&=p/2+1,\dots,p\\ j&=p/2+1,\dots,p}}$};
        \node[large] at (M-4-2){$\bigl(b_{i,j}\bigr)_{\subalign{i&=p/2+1,\dots,p\\ j&=1,\dots,p/2}}$};
    \draw([shift={(-3mm,0mm)}]M-3-1.south west)--([xshift=2mm]M-3-7.south east);
    \draw(M-5-4.south east)--([shift={(0mm,2mm)}]M-1-4.north east);
  \end{tikzpicture}
\]
where $(a_{i,j})_{i,j}$ are independent observations from a Bernoulli distribution with parameter $p_{intra}$ and $(b_{i,j})_{i,j}$ are independent observations from a Ber\-nou\-lli distribution with parameter $p_{inter}$. In simulations, the probability to have an edge between nodes inside each communities is set to $p_{intra}=0.6$. Four values of the probability {of} connection between the two communities are considered, $p_{inter}\in\{0.01, 0.05, 0.15, 0.4\}$. The corresponding adjacency matrices are displayed in Figure~\ref{fig:adj}. Parameter $p_{inter}$ determines the degree of sparsity of the correlation matrix, where sparsity corresponds to the number of edges present in the graph with respect to the corresponding complete graph. High values of $p_{inter}$ means a low sparsity.

As explained above, we consider a constant value of non-zero correlations, $\rho$. The value must be chosen such that the resulting matrix $\mathbf M$ is positive definite. In our application, it implies that $\rho \leq 0.2$, independently of $p_{inter}$ and $p_{intra}$.

{The choice of maximal value of $\rho$ is linked to the density of the graph. Indeed, the denser the graph is, the smaller $\rho$ has to be chosen.} We choose the same values of $\rho$ here whatever the adjacency matrix is, to study more specifically the influence of the sparsity. As noticed {\it e.g.} by \cite{Kramer}, the recovery of the edges is in general more difficult when the sparsity of the graph decreases. The authors also remarked that all methods perform well with cluster networks. This observation motivates the analysis of the behavior of multiple testing approaches with respect to the sparsity.

\begin{figure}[!ht]
\centering
\caption{Adjacency matrices. The parameter $sparse$ is the proportion of edges present in the graph with respect to a complete graph. {$p_{intra}=0.6$.}}
\label{fig:adj}
   \begin{subfigure}[t]{0.4\textwidth}
        \caption{$p_\text{inter}=0.01$, $sparse$=25.4\%.}
        \includegraphics[height=5cm]{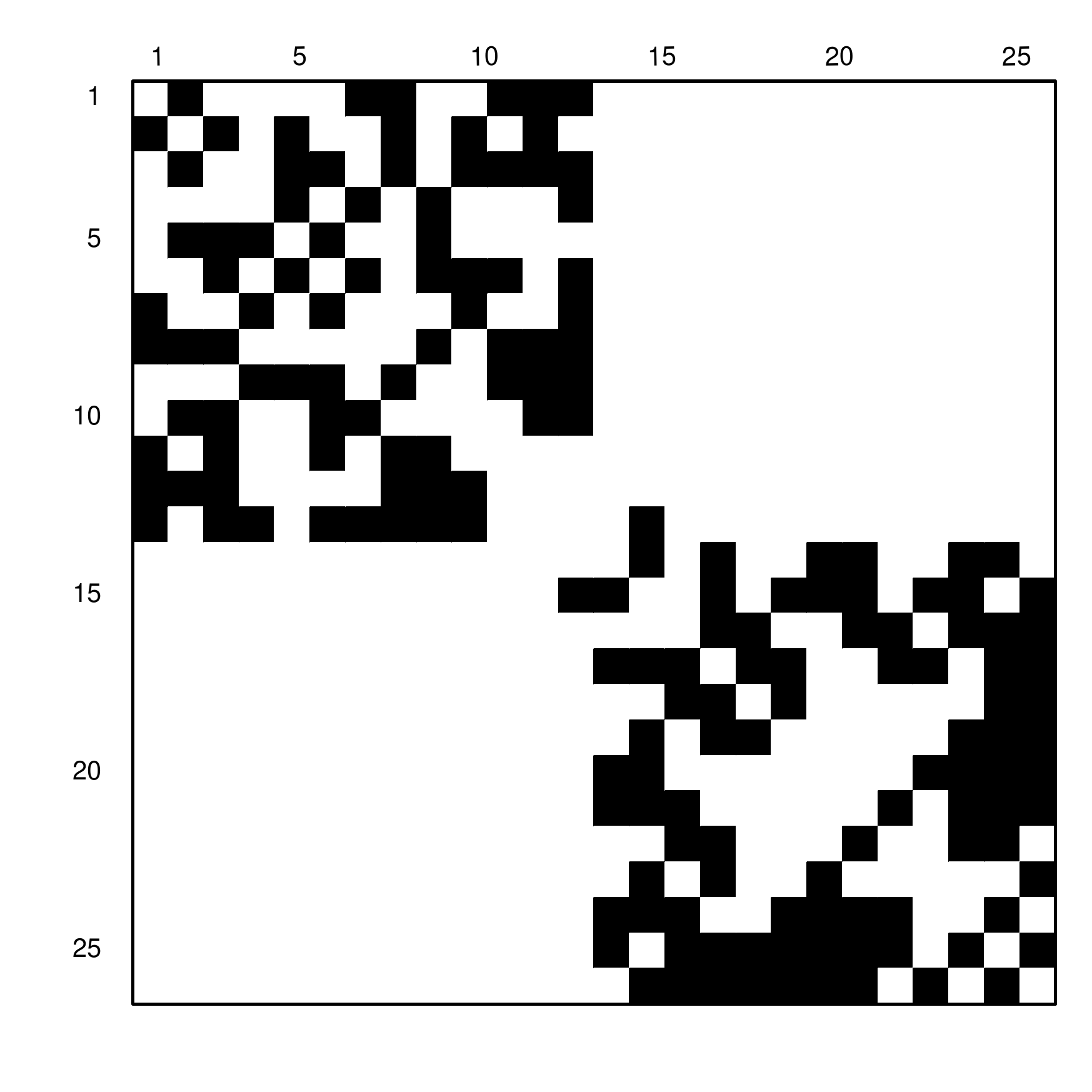}
        \end{subfigure}  
        \begin{subfigure}[t]{0.4\textwidth}
        \caption{$p_\text{inter}=0.05$, $sparse$=30\%.}
        \includegraphics[height=5cm]{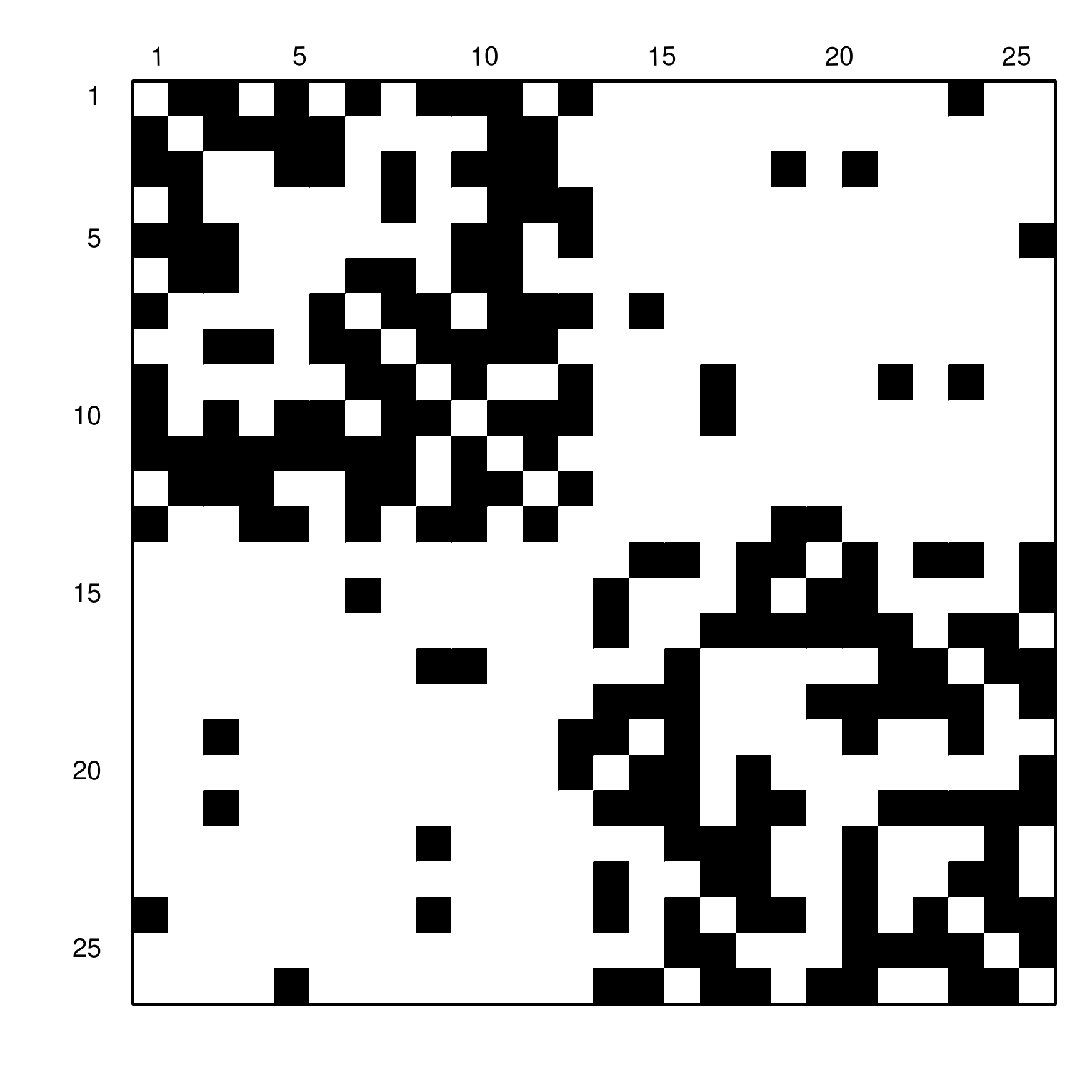}
        \end{subfigure}  \\
        \begin{subfigure}[t]{0.4\textwidth}
        \caption{$p_\text{inter}=0.15$, $sparse$=37\%.}
        \includegraphics[height=5cm]{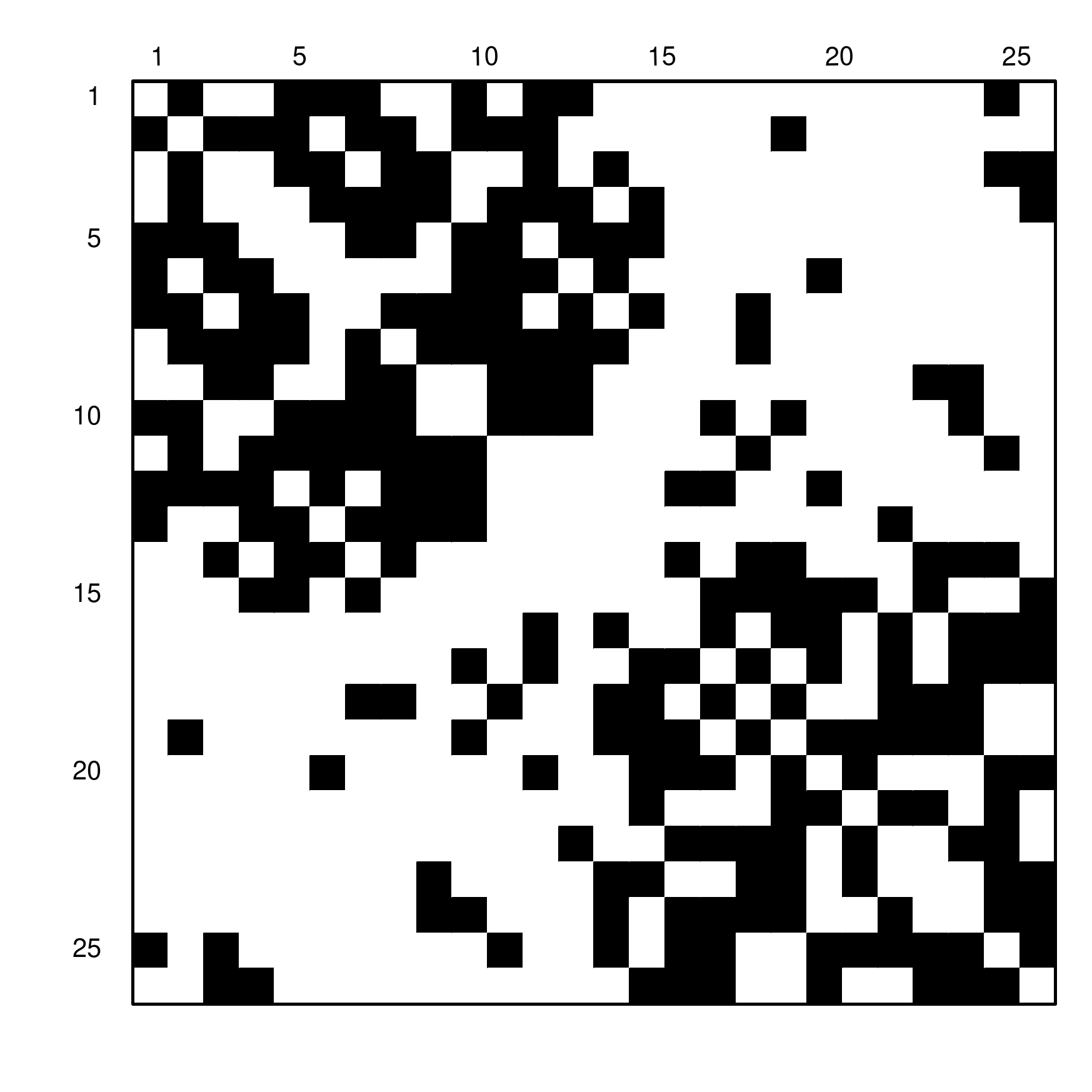}
        \end{subfigure}
                \begin{subfigure}[t]{0.4\textwidth}
        \caption{$p_\text{inter}=0.4$, $sparse$=47.6\%.}
        \includegraphics[height=5cm]{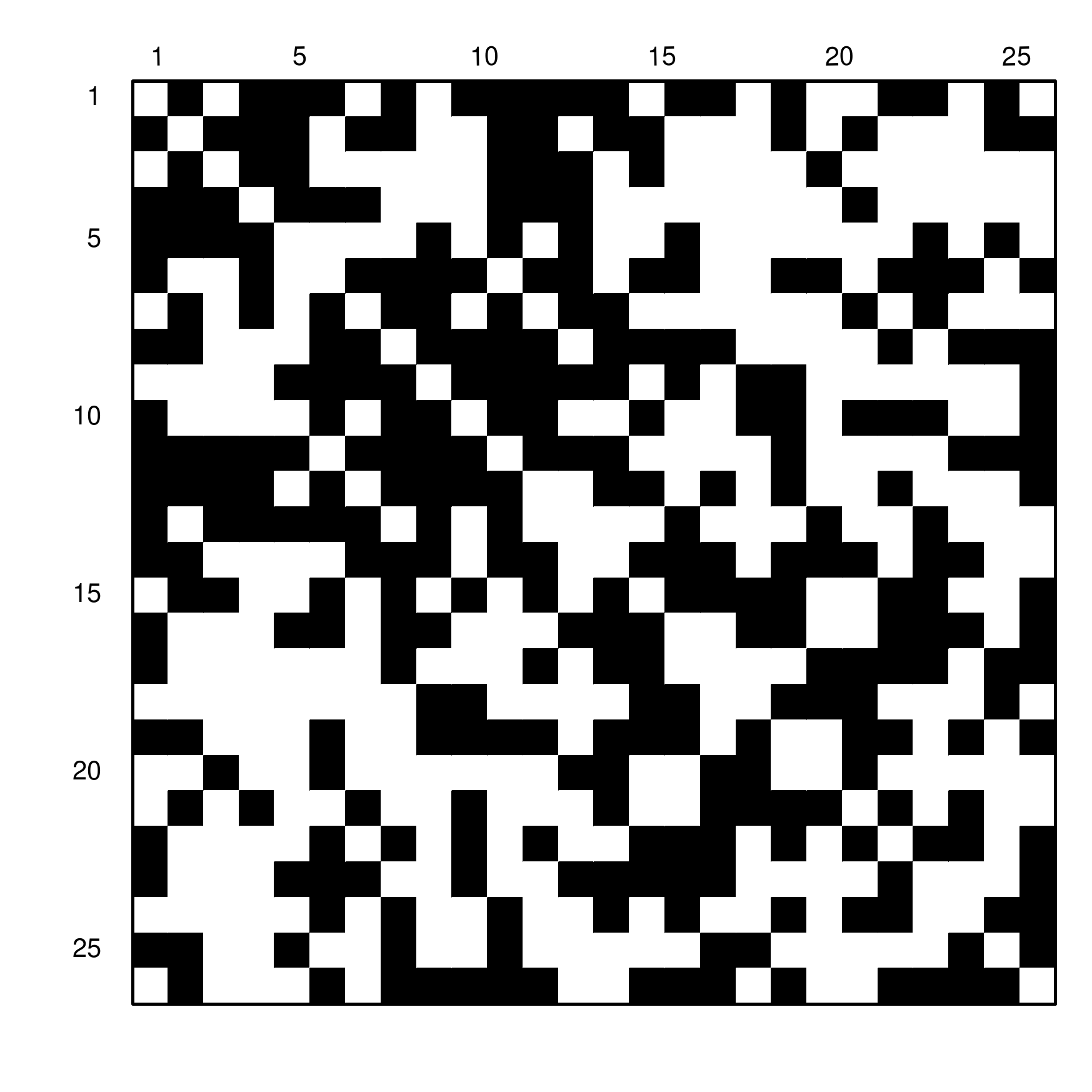}
        \end{subfigure}

\end{figure}

We simulate $Y_1, \dots Y_n$ realizations of a Gaussian random variable $\mathcal N(0,\mathbf M)$, for various values of sample size $n$. Edges are estimated applying correlation tests~\eqref{letest}. 325 tests are needed in this framework. The four statistics described above as the four methods are applied, that is, Methods 1--3 and Method 4'. The level for the FWER control is fixed to $\alpha=5\%$. The non parametric bootstrap and the parametric bootstrap requires to fix the number of samples used in the estimation of quantiles. For non parametric bootstrap, the number of samples is denoted as \BootRW and equal to 100. For parametric bootstrap, denoted \MaxT, the number of samples is fixed to 1000. Resulting computational cost is high but less samples leads to an unsatisfying quality.

Our simulation procedure depends on the three main parameters: $\rho$, $p_{inter}$, and $n$. $\rho$ corresponds to the value of entries in the correlation matrices. This is related to the signal to noise ratio for the identification of edges in the graph. Indeed, when $\rho$ is low, it is harder to detect the correct edges. In the sequel, the simulations are displayed for two values of $\rho$: 0.1 and 0.2. Recall that because of the constraint on our model, it is not possible to choose larger values of $\rho$. $p_{inter}$ is defined as the number of edges between the two components of the stochastic block model. This is controlling the sparsity of the graph and equivalently the number of null hypotheses. When $p_{inter}$ increases, the number of null hypotheses decreases. For the Bonferroni procedure, this has a direct impact on the FWER. Finally, $n$ is the number of independent realizations available to estimate the correlations. $n$ and $\rho$ are linked by our main assumption on the convergence of the estimator \eqref{hypGauss}.  In the two following parts, results of simulations applied on all four methods and using different parameters in the model are given for both FWER and power.  

\subsection{Results on FWER control}

The difference between $\rho=0.2$ and $\rho=0.1$ is illustrated on Figure~\ref{fig:fwer2_empirical} and Figure~\ref{fig:fwer1_empirical}. It is clear that when $\rho=0.1$, all the proposed approaches provide empirical FWER much lower than the theoretical value fixed at 0.05 for any sample sizes and any sparsity.    \\

The influence of the sample size $n$ relative to the use of a specific statistics is displayed on Figure~\ref{fig:fwer2_empirical} for $(T^{(1)}_{ij})_{1\leq i<j\leq p}$, Figure~\ref{fig:fwer2_student} for $(T^{(2)}_{ij})_{1\leq i<j\leq p}$, Figure~\ref{fig:fwer2_fisher} for $(T^{(3)}_{ij})_{1\leq i<j\leq p}$ and Figure~\ref{fig:fwer2_gaussian} for $(T^{(4)}_{ij})_{1\leq i<j\leq p}$. Depending on the chosen statistics, for small sample sizes the FWER may not be controlled: this is due to the initial test procedure rather than the multiple correction. Indeed,  tests \eqref{eqn:tests} are based on the normality of the statistics $(T^{(k)}_{ij})_{1\leq i<j\leq p}$, $k=1,\dots,4$. As the Gaussian distribution is only satisfied asymptotically, the FWER control is valid only for sufficiently large values of $n$. The convergence of the Gaussian statistics $(T^{(4)}_{ij})_{1\leq i<j\leq p}$ is the  slowest, and there is no control of FWER for $n\leq 200$. However, for the Student statistics $(T^{(2)}_{ij})_{1\leq i<j\leq p}$ the control of FWER is valid for $n\geq 200$. With the Fisher statistics $(T^{(2)}_{ij})_{1\leq i<j\leq p}$ only the case $n=50$ is not controlled, while the empirical statistics $(T^{(1)}_{ij})_{1\leq i<j\leq p}$ behaves well even for small sample sizes. 

{The advantage of using \BootRW~procedure lies in the fact that it does require a Gaussian distribution.} As the quantile is evaluated on bootstrap resamples of the statistics, it corresponds to the effective distribution and not to the theoretical one. As a consequence, the FWER is controlled for all tests statistics, whatever the sample size but this comes with an increase of the computation time.

The comparison of step-down procedure with single step is also illustrated on the Figures~\ref{fig:fwer1_empirical} to \ref{fig:fwer2_gaussian}. In particular, Figures~\ref{fig:fwer2_student},~\ref{fig:fwer2_fisher} and~\ref{fig:fwer2_gaussian} show that the control for step-down \MaxT~is not always obtained. This results from numerical instability of the quantile estimation. Indeed, as discussed in Section~\ref{sec:simu-model}, the number of bootstrap samples is critical. The numerical approximation is more critical for step-down procedures. It is shown in particular for sparse models and large sample sizes.

The empirical FWER slightly increases with the sparsity of the matrix. This is due to the link between the sparsity and the number of null hypotheses $m_0(P)$. The FWER level of multiple testing procedures usually increases with $m_0(P)$. For example, for Bonferroni procedure, the FWER is in fact controlled at level $\alpha\frac{m_0(P)}{m}$.

\begin{figure}[!ht]

\centering

\caption{Empirical FWER on 10000 simulations, with respect to the sample size $n$, for empirical statistics, $T^{(1)}$. The four sparsity frameworks are considered. $\rho=0.2$.}
  \label{fig:fwer2_empirical}
    \begin{subfigure}[t]{0.3\textwidth}
        \caption{Bonferroni}
        \includegraphics[height=4cm]{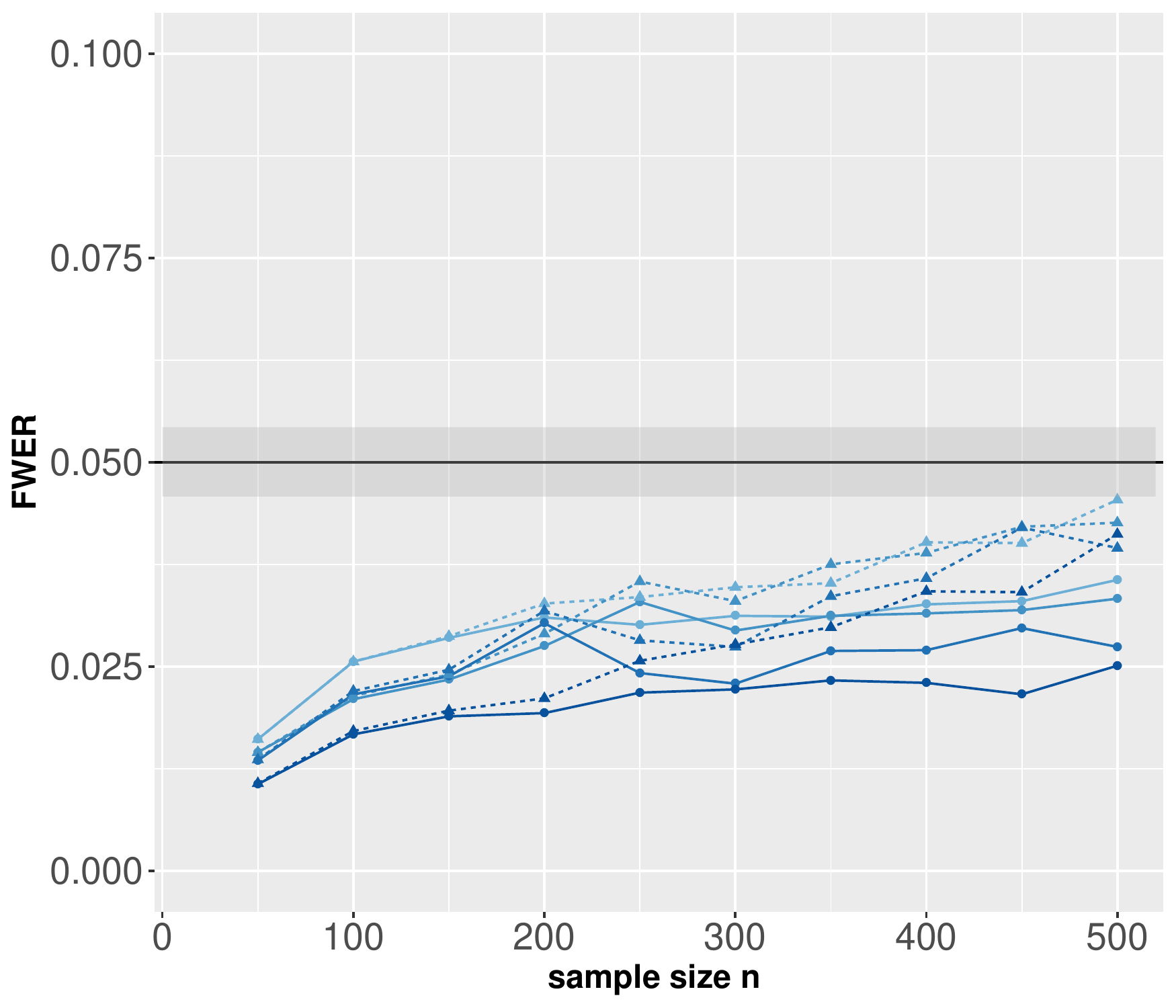}
        \end{subfigure}  
        \begin{subfigure}[t]{0.3\textwidth}
        \caption{\Sidak}
        \includegraphics[height=4cm]{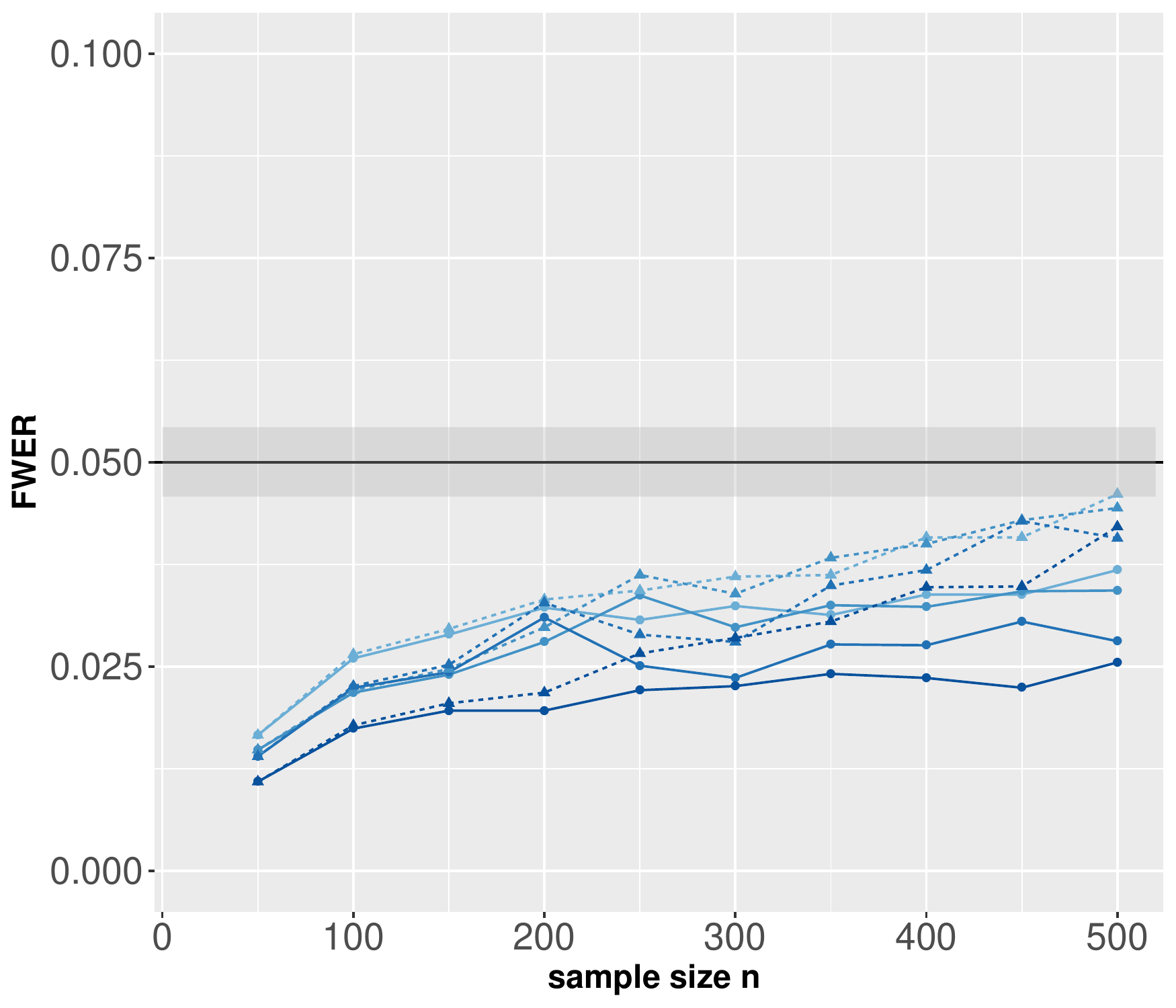}
        \end{subfigure} 
    \begin{subfigure}[t]{0.33\textwidth}
        \caption{\BootRW}
        \includegraphics[height=4cm]{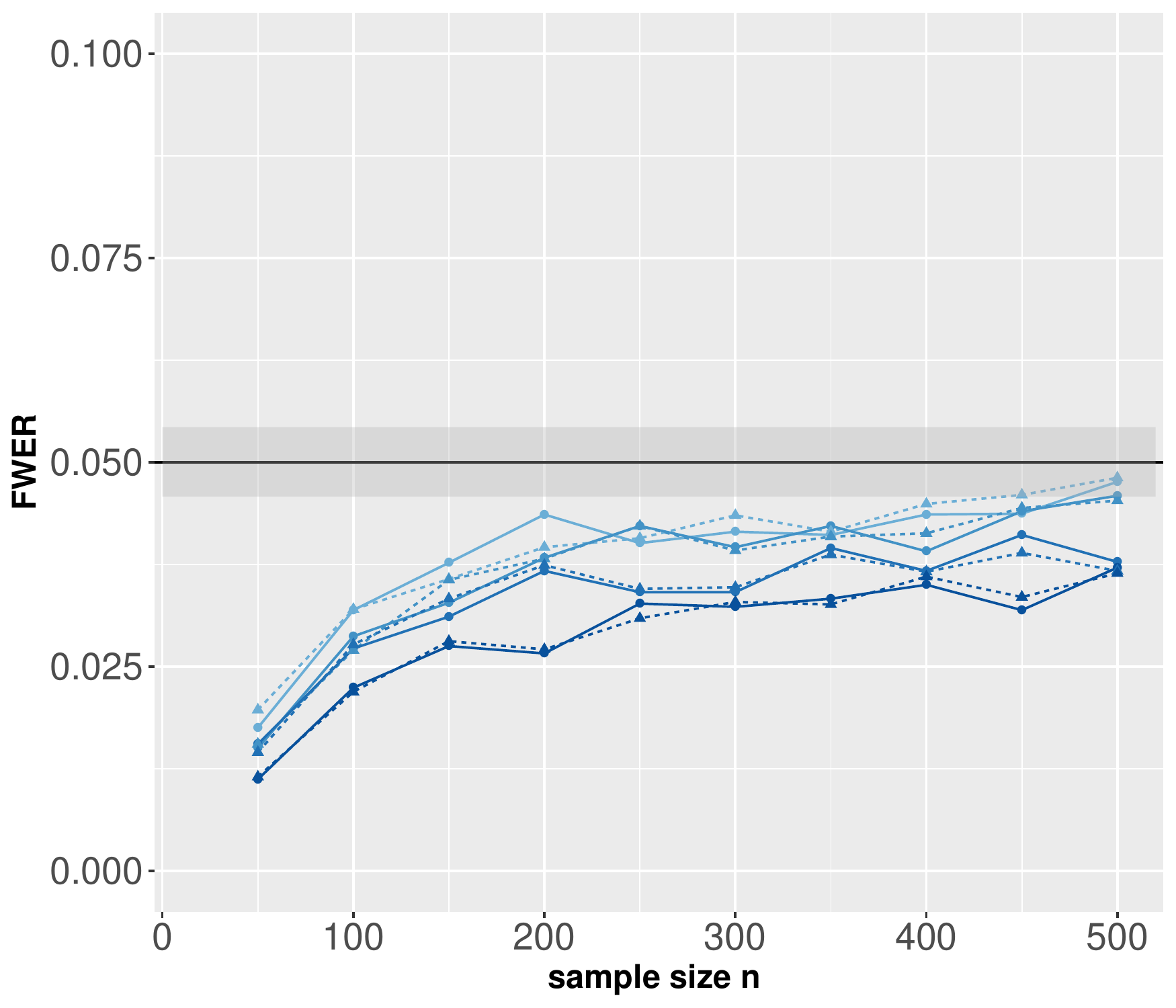}
        \end{subfigure}  \\
        \begin{subfigure}[t]{0.3\textwidth}
        \caption{\MaxT}
        \includegraphics[height=4cm]{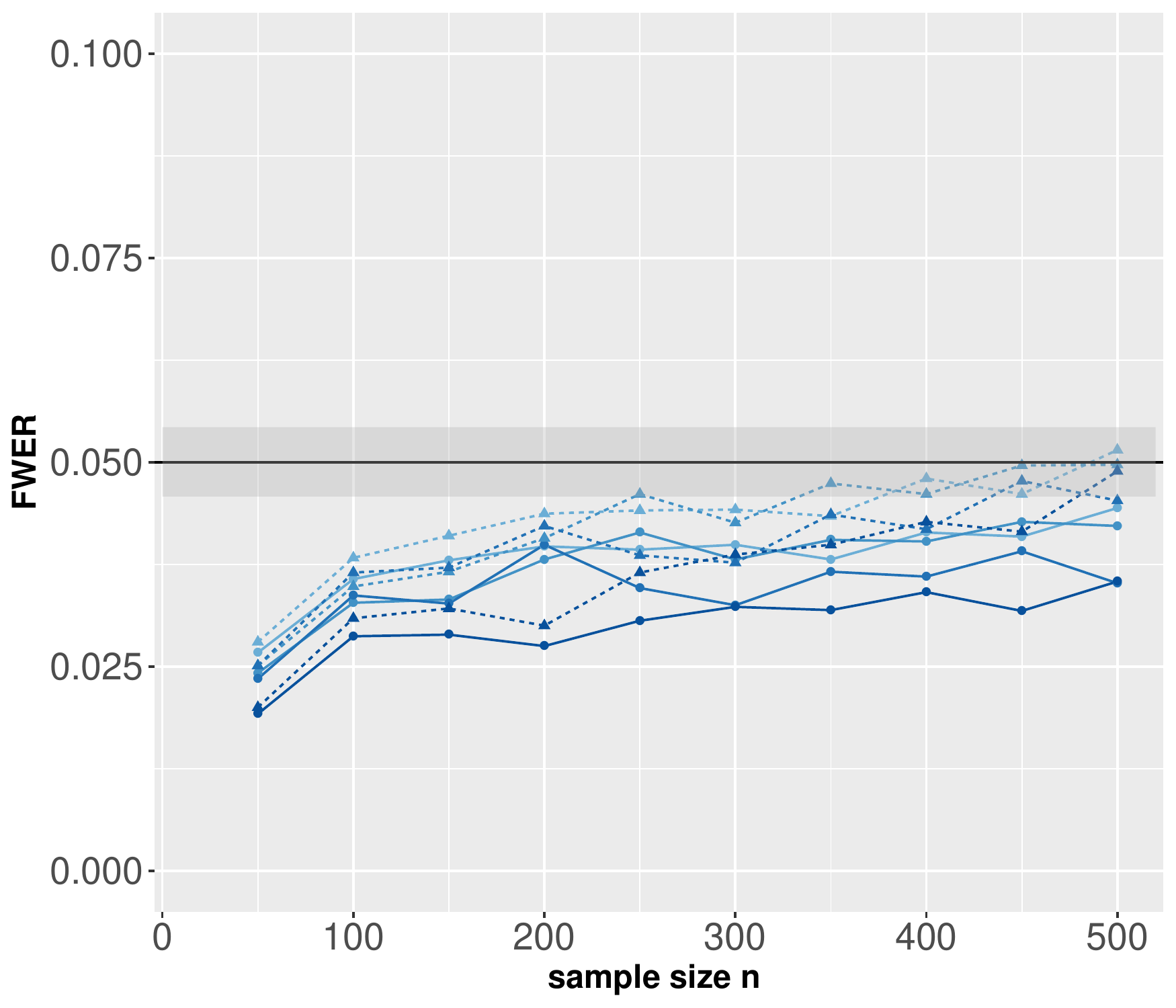}
        \end{subfigure}
                \begin{subfigure}[t]{0.33\textwidth}
        \caption{Oracle \MaxT}
        \includegraphics[height=4cm]{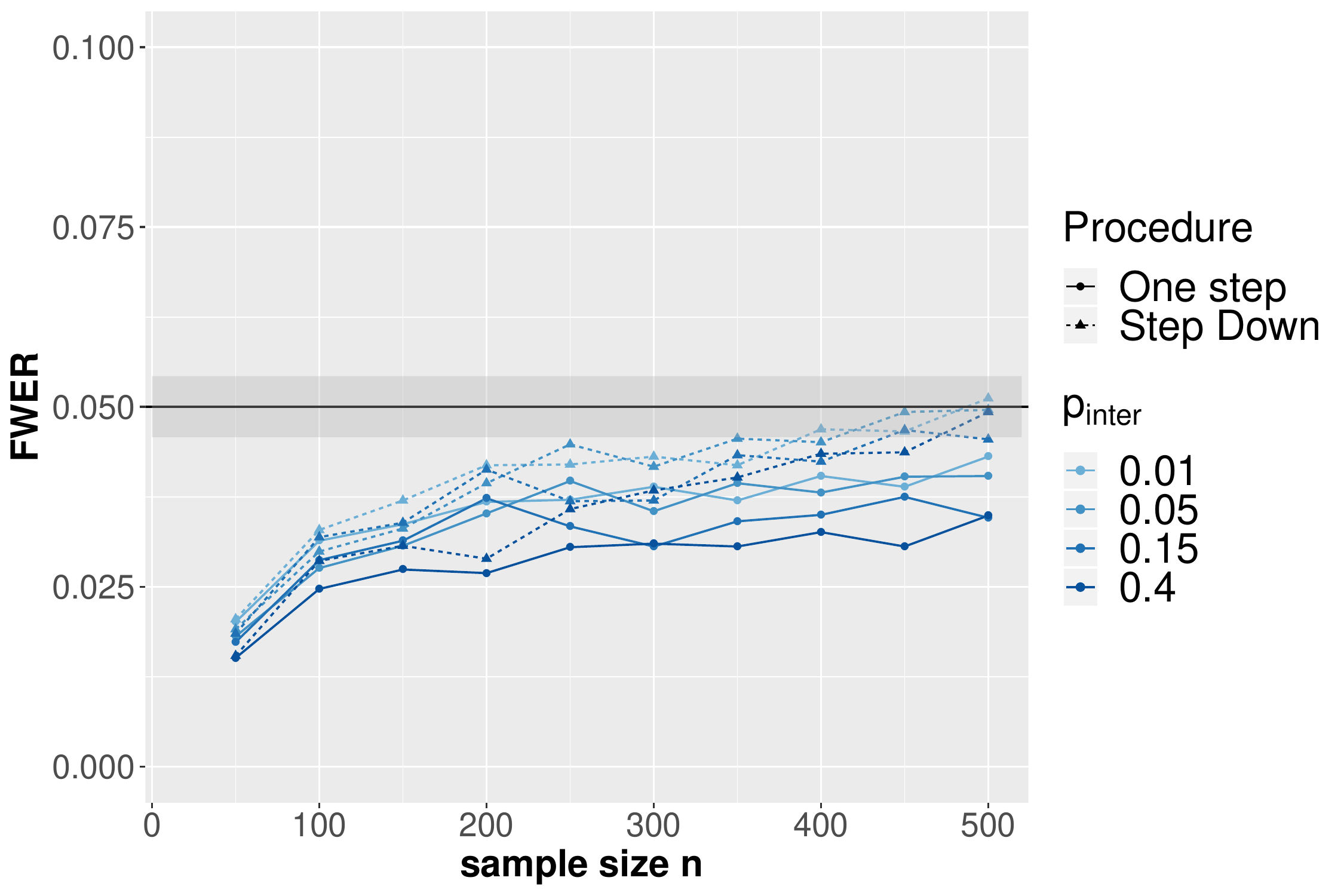}
        \end{subfigure}

\end{figure}

\begin{figure}

\centering

\caption{Empirical FWER on 10000 simulations, with respect to the sample size $n$, for empirical statistics, $T^{(1)}$. The four sparsity frameworks are considered. $\rho=0.1$.}
  \label{fig:fwer1_empirical}
    \begin{subfigure}[t]{0.3\textwidth}
        \caption{Bonferroni}
        \includegraphics[height=4cm]{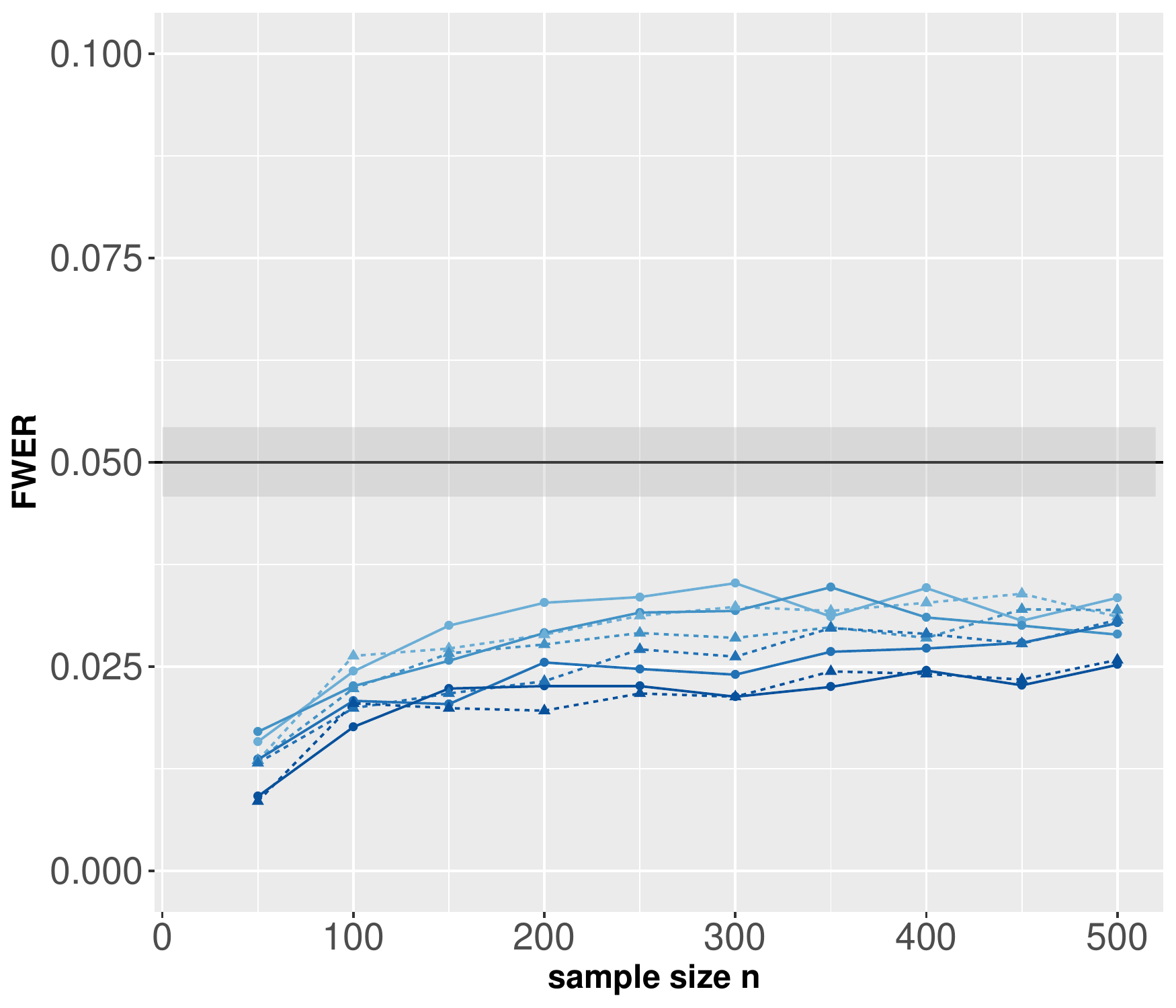}
        \end{subfigure}  
        \begin{subfigure}[t]{0.3\textwidth}
        \caption{\Sidak}
        \includegraphics[height=4cm]{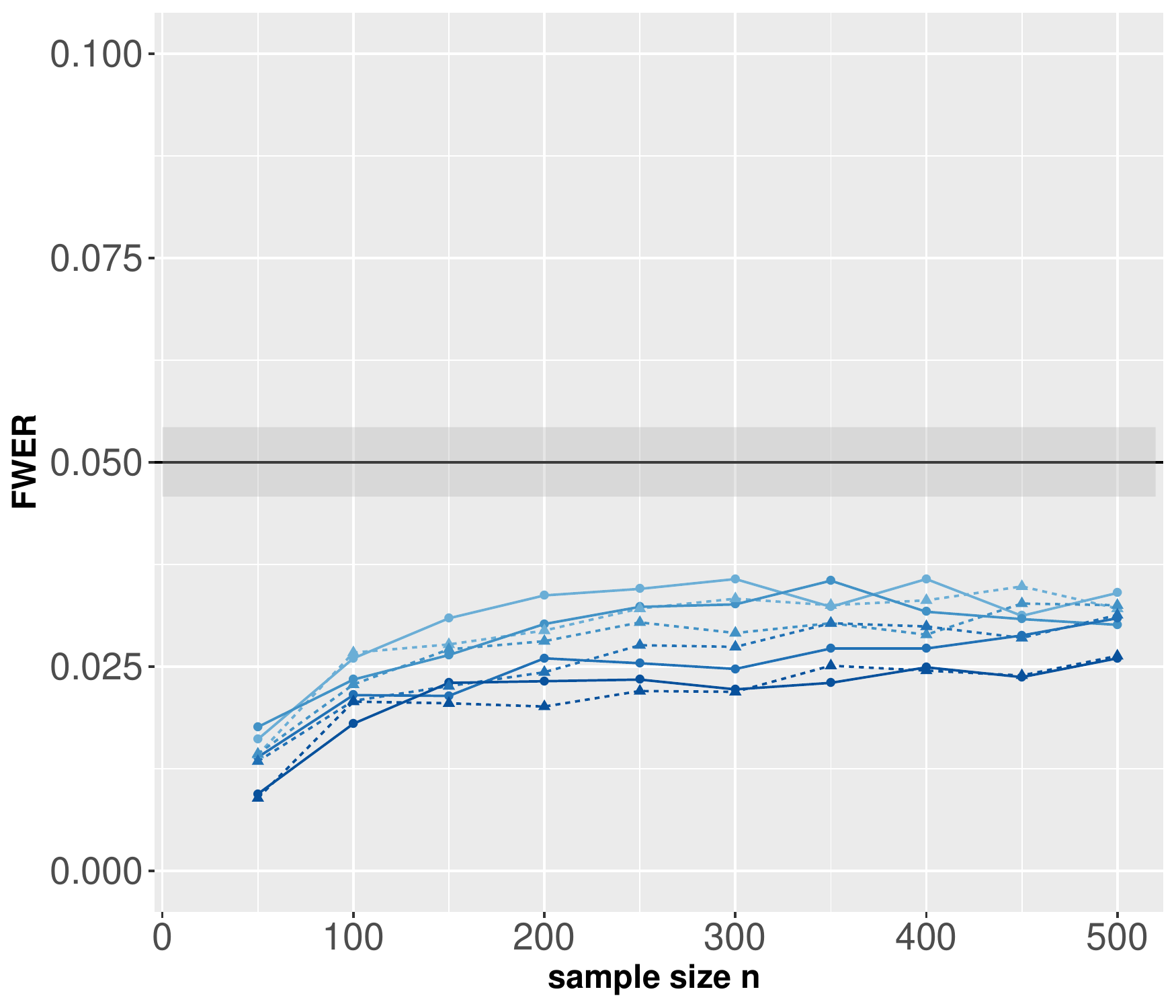}
        \end{subfigure} 
    \begin{subfigure}[t]{0.32\textwidth}
        \caption{\BootRW}
        \includegraphics[height=4cm]{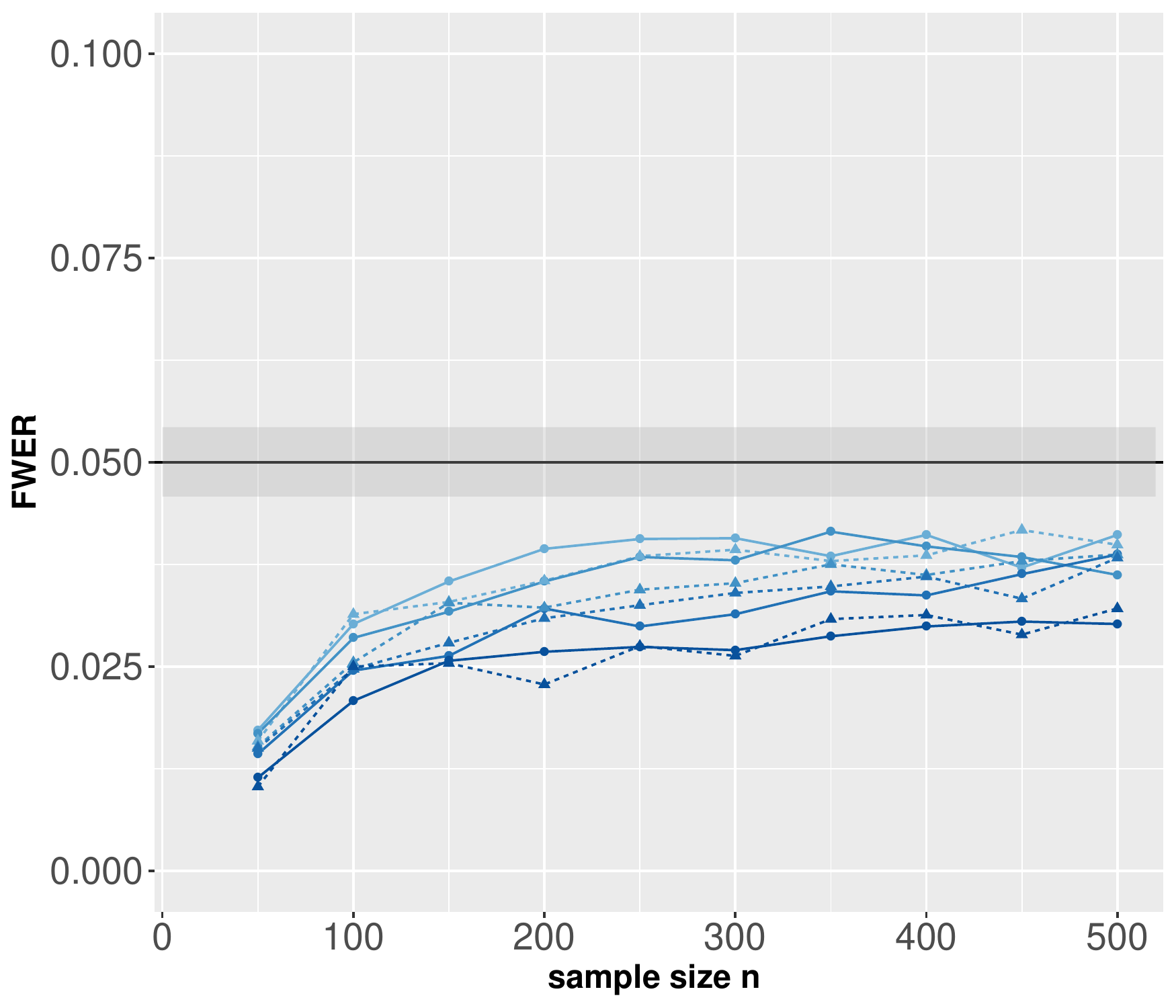}
        \end{subfigure}  \\
        \begin{subfigure}[t]{0.3\textwidth}
        \caption{\MaxT}
        \includegraphics[height=4cm]{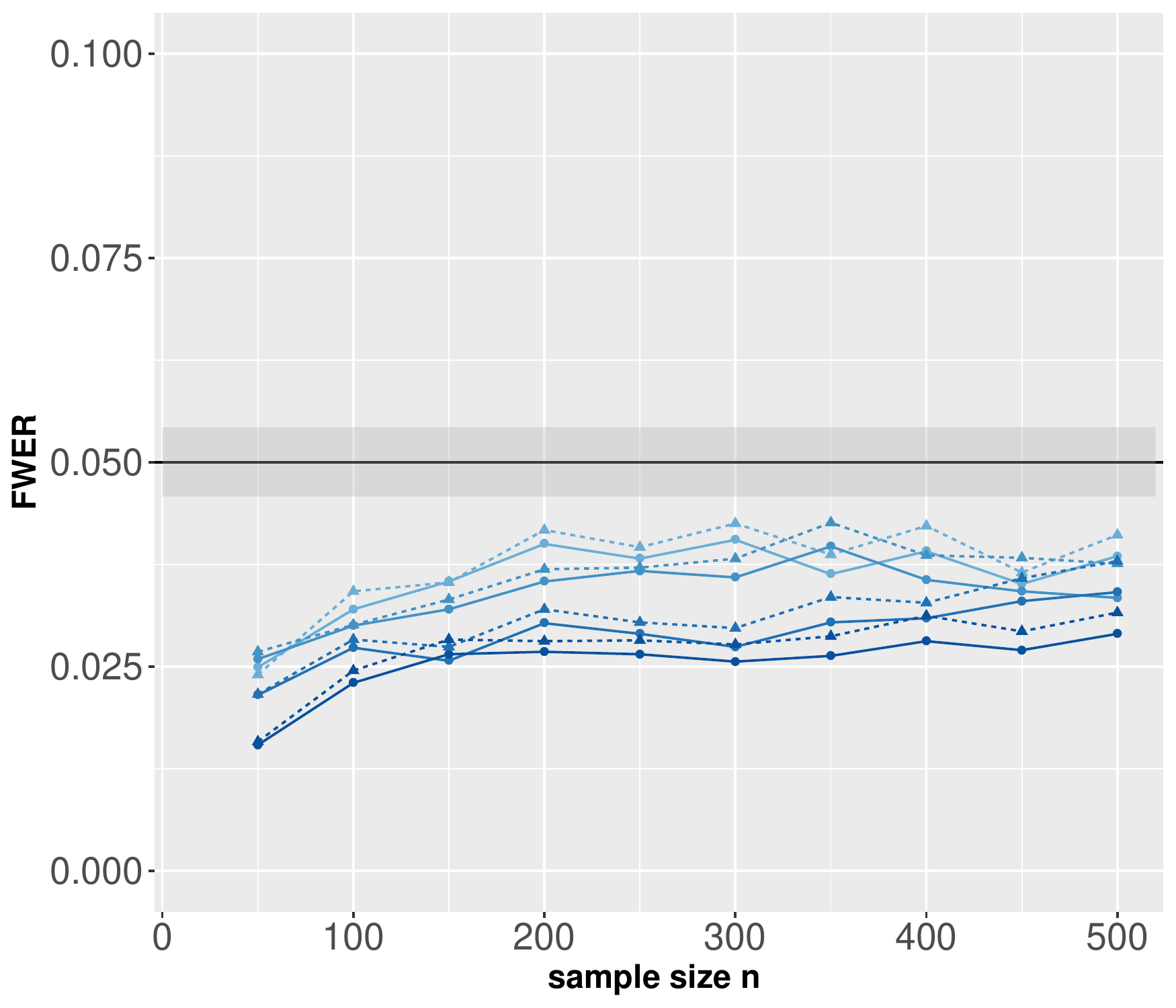}
        \end{subfigure}
                \begin{subfigure}[t]{0.35\textwidth}
        \caption{Oracle \MaxT}
        \includegraphics[height=4cm]{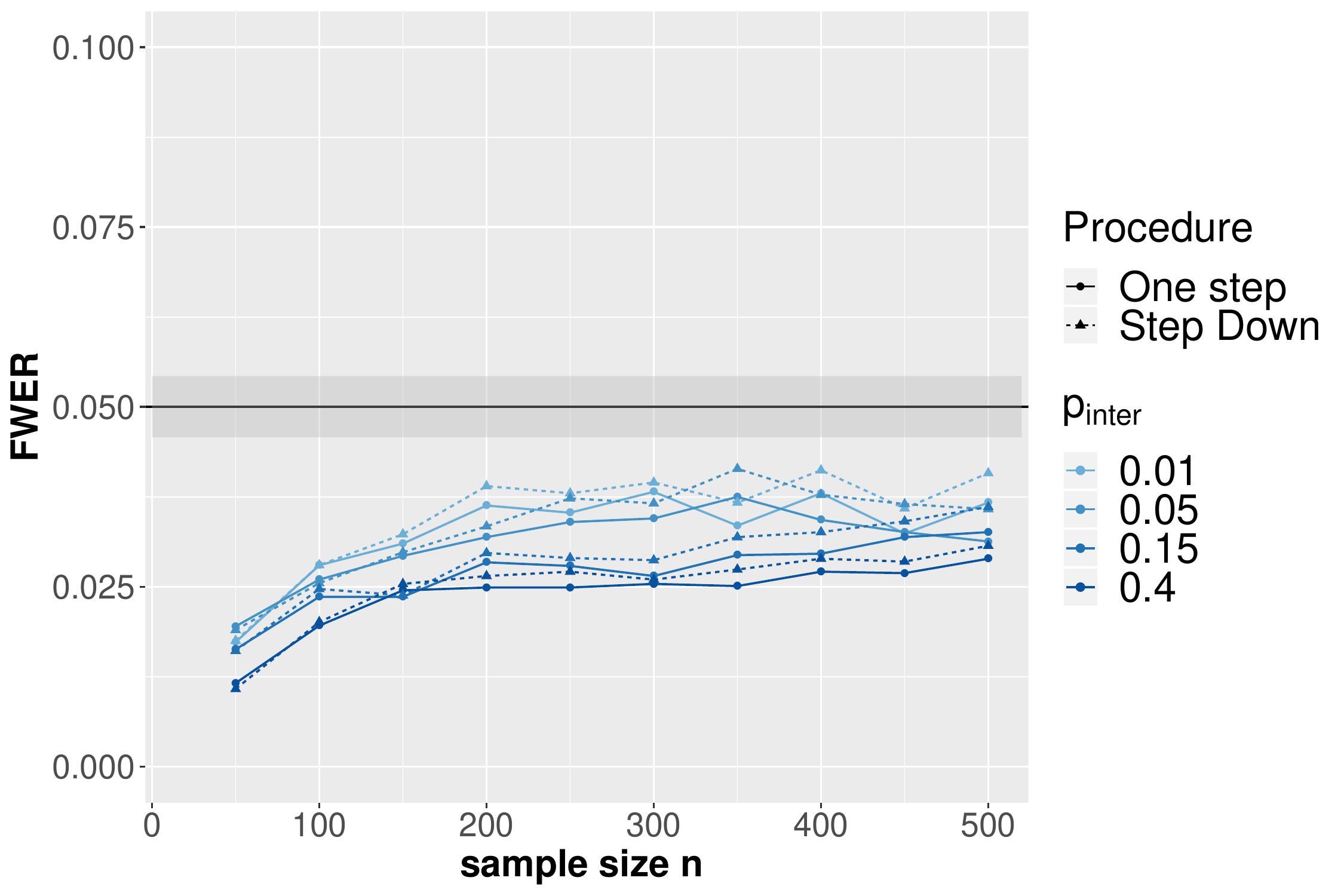}
        \end{subfigure}

\caption{Empirical FWER on 10000 simulations, with respect to the sample size $n$, for Student statistics, $T^{(2)}$. The four sparsity frameworks are considered. $\rho=0.2$.}
  \label{fig:fwer2_student}
    \begin{subfigure}[t]{0.3\textwidth}
        \caption{Bonferroni}
        \includegraphics[height=4cm]{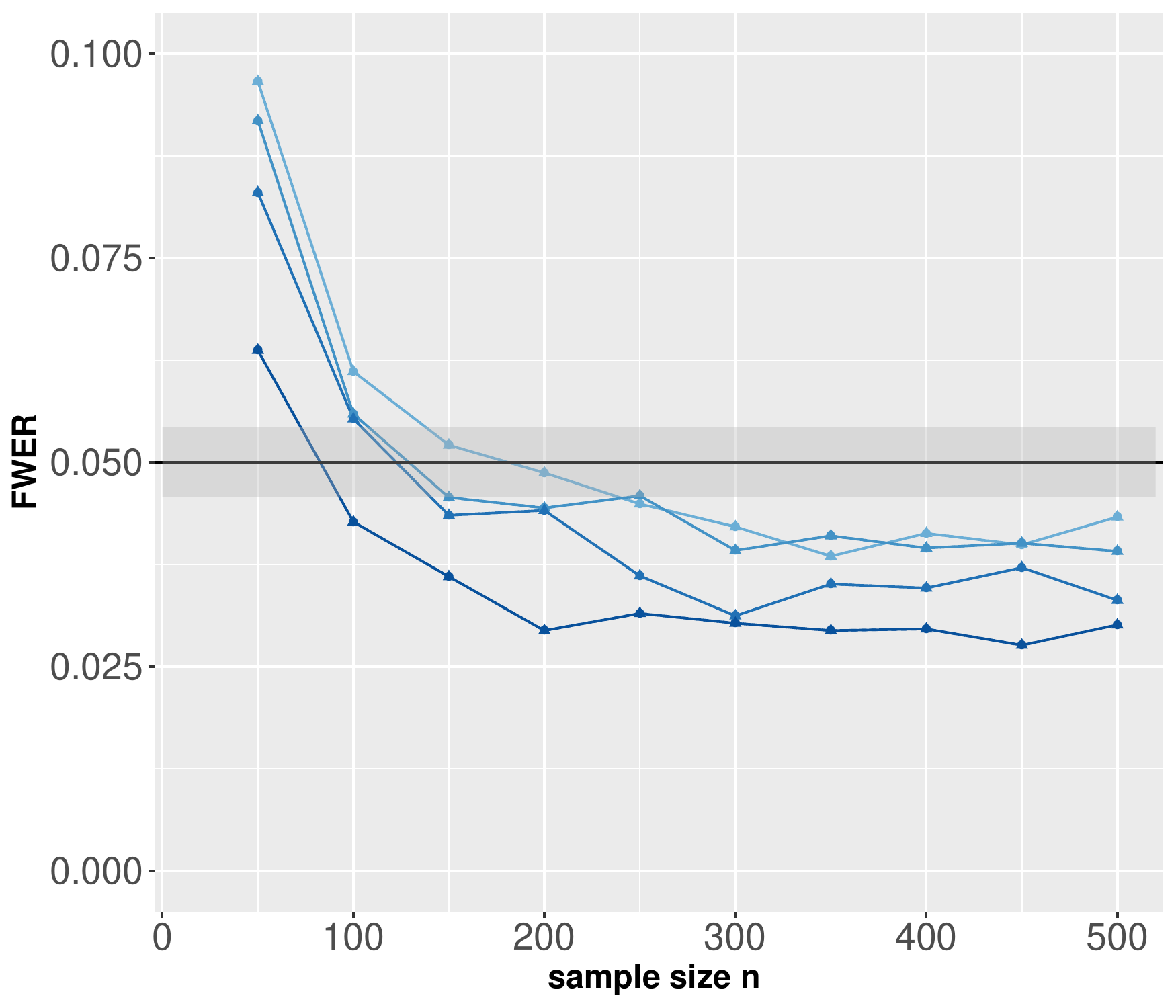}
        \end{subfigure}  
        \begin{subfigure}[t]{0.3\textwidth}
        \caption{\Sidak}
        \includegraphics[height=4cm]{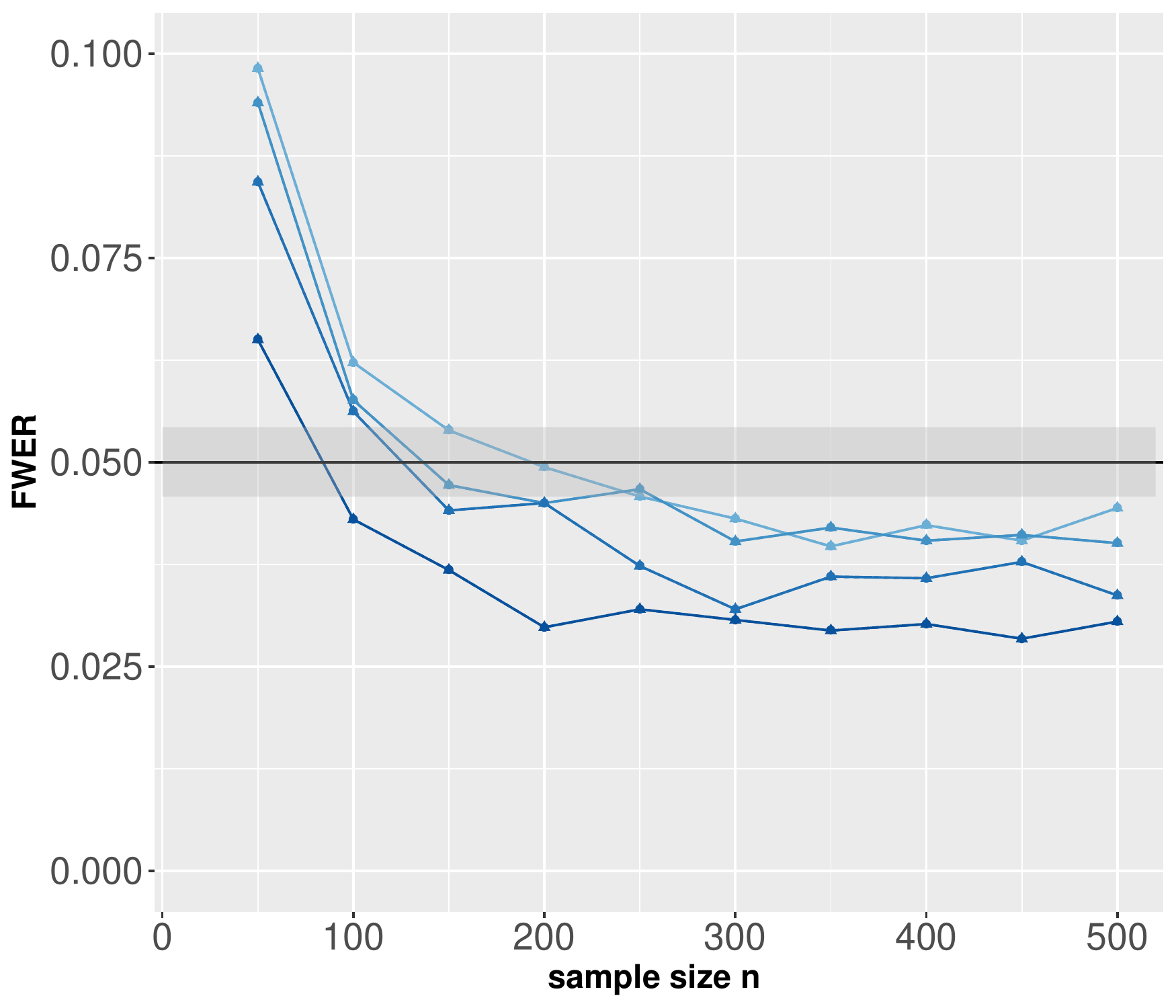}
        \end{subfigure} 
    \begin{subfigure}[t]{0.32\textwidth}
        \caption{\BootRW}
        \includegraphics[height=4cm]{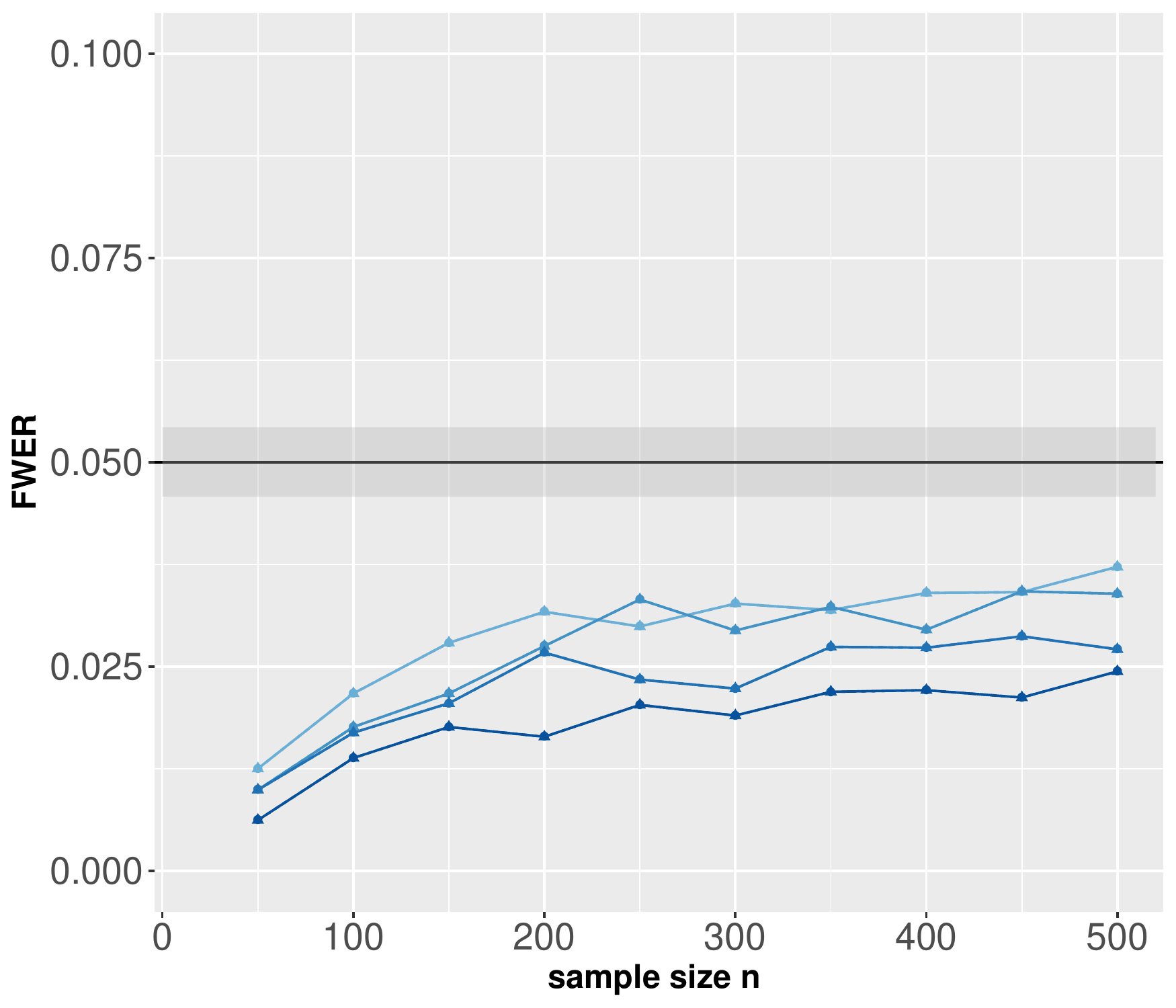}
        \end{subfigure}  \\
        \begin{subfigure}[t]{0.3\textwidth}
        \caption{\MaxT}
        \includegraphics[height=4cm]{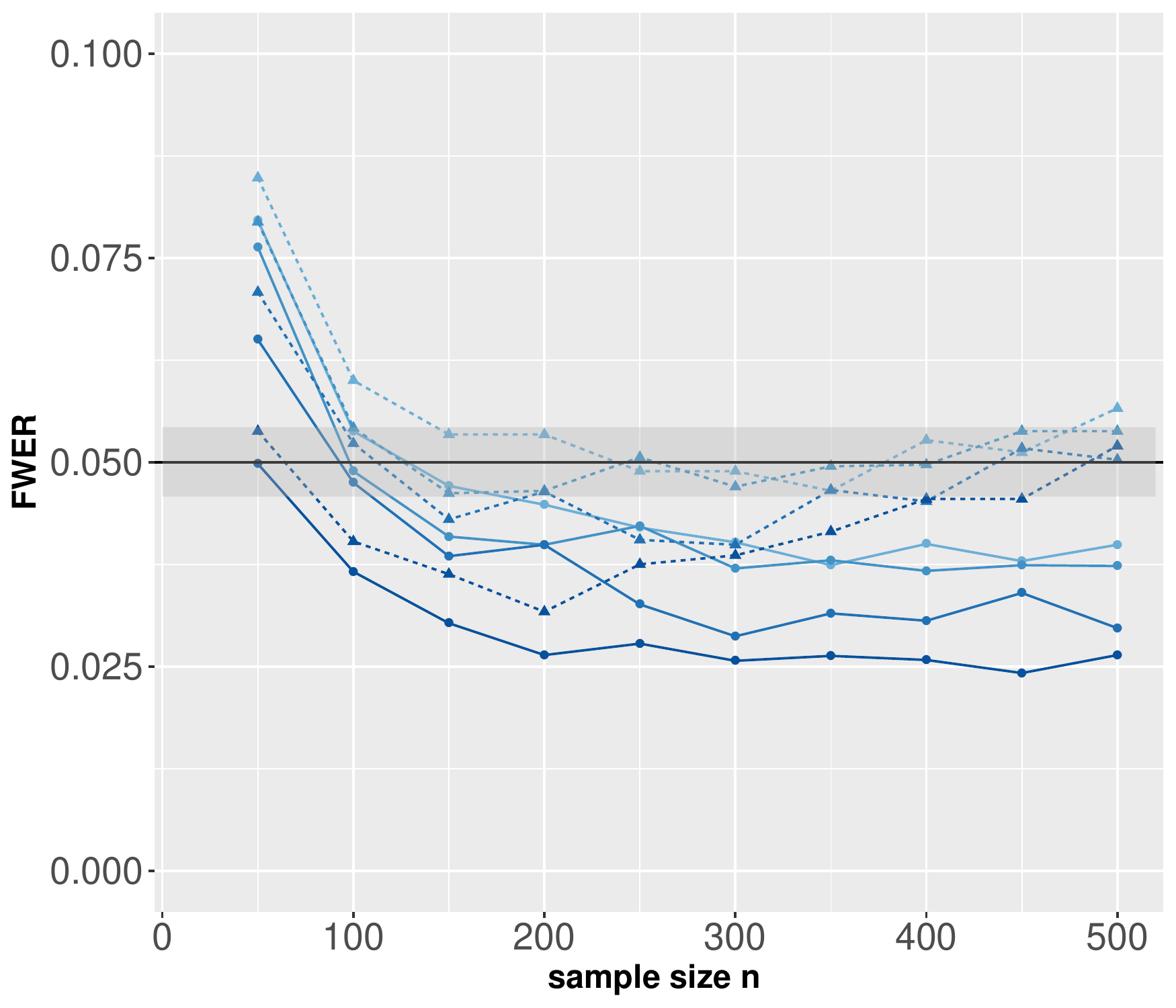}
        \end{subfigure}
                \begin{subfigure}[t]{0.32\textwidth}
        \caption{Oracle \MaxT}
        \includegraphics[height=4cm]{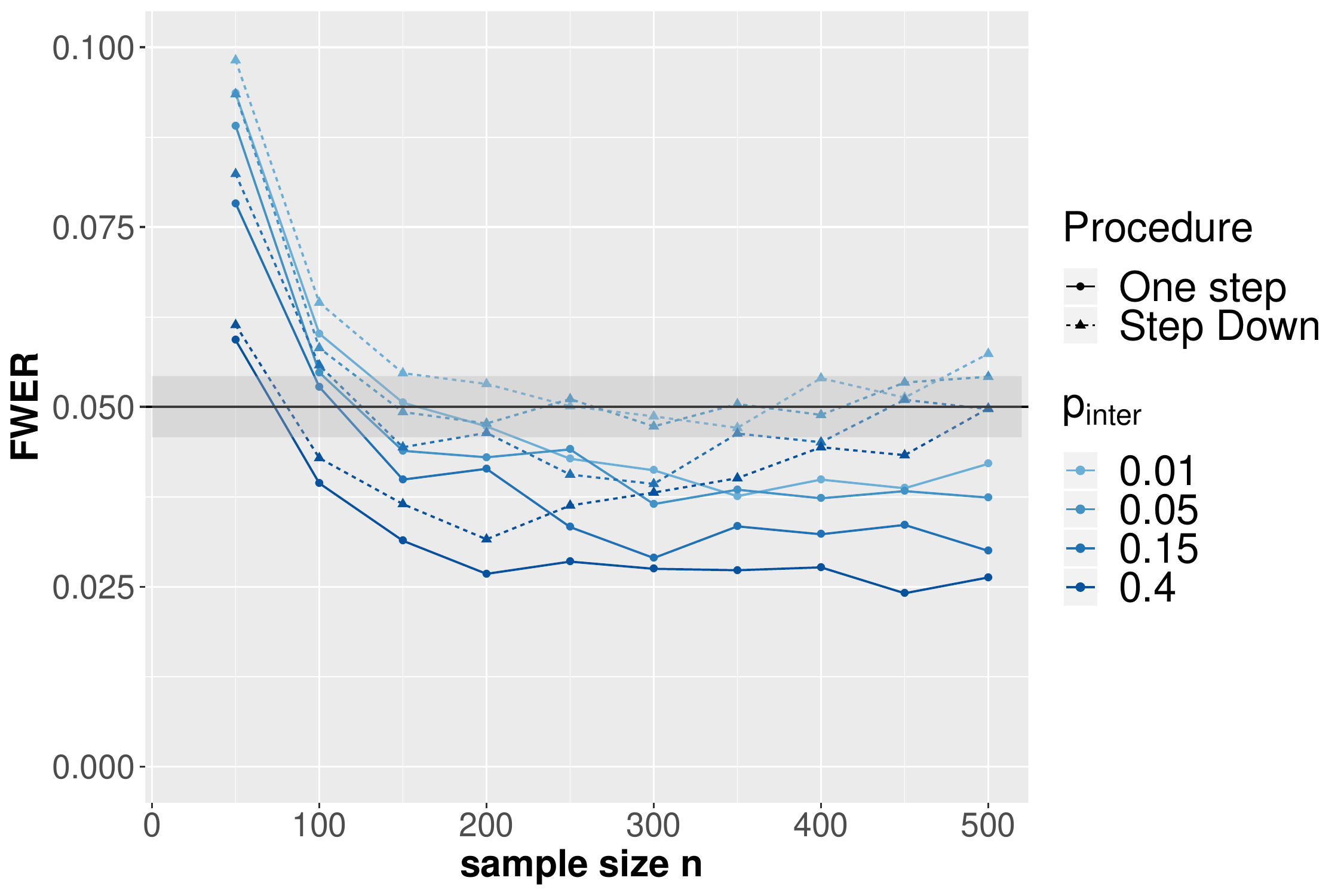}
        \end{subfigure}

\end{figure}

\begin{figure}
\centering

\caption{Empirical FWER on 10000 simulations, with respect to the sample size $n$, for Fisher statistics, $T^{(3)}$. The four sparsity frameworks are considered. $\rho=0.2$.}
  \label{fig:fwer2_fisher}
    \begin{subfigure}[t]{0.3\textwidth}
        \caption{Bonferroni}
        \includegraphics[height=4cm]{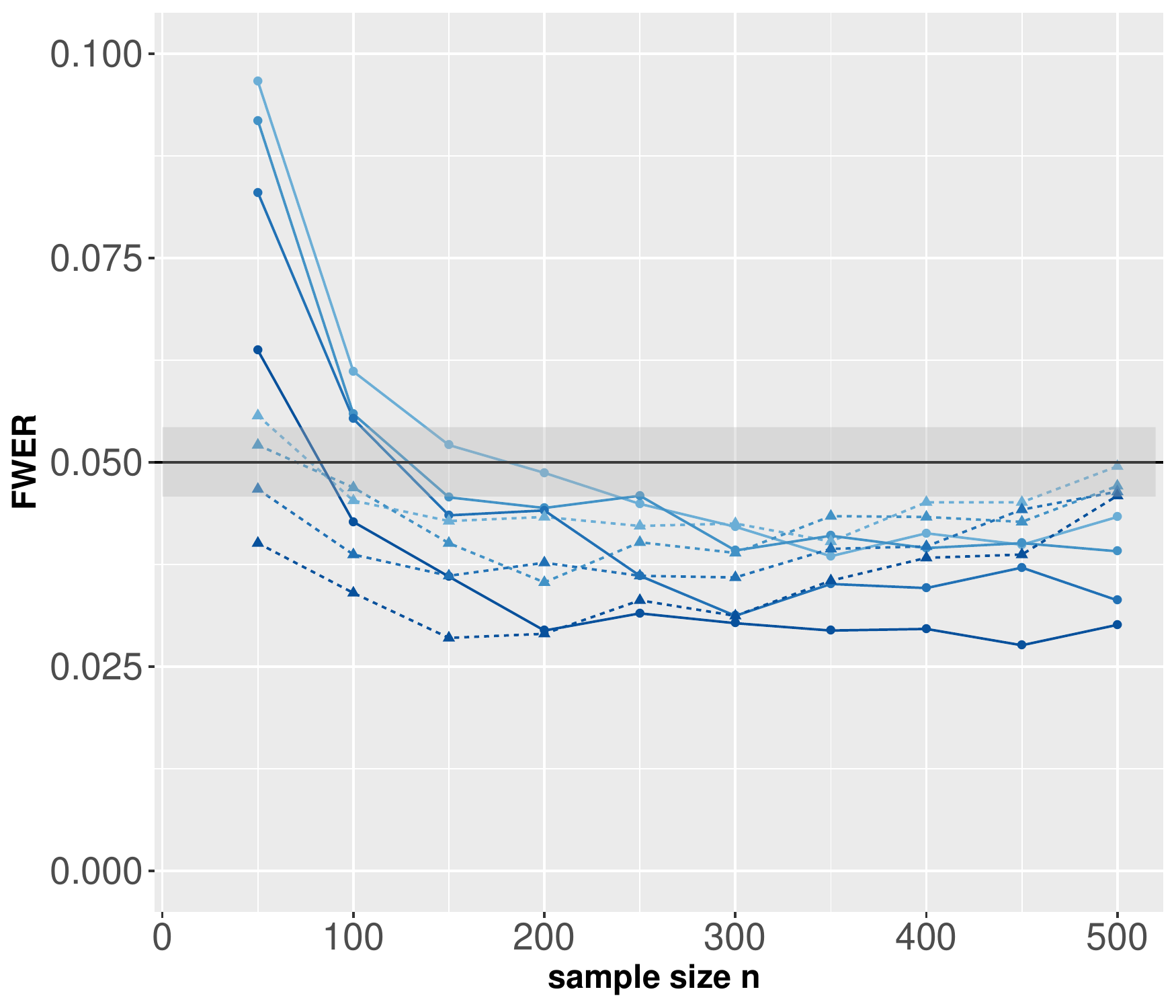}
        \end{subfigure}  
        \begin{subfigure}[t]{0.3\textwidth}
        \caption{\Sidak}
        \includegraphics[height=4cm]{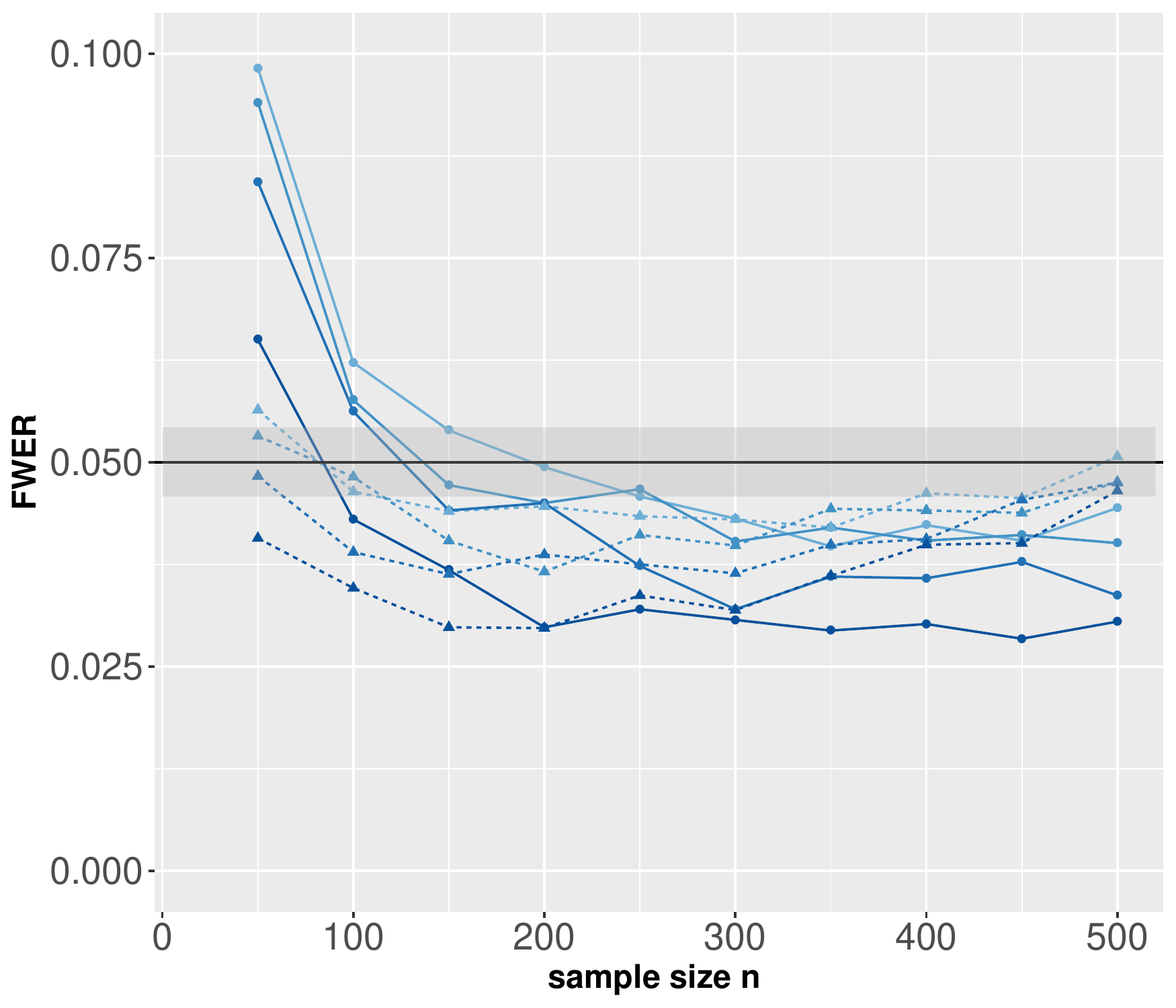}
        \end{subfigure} 
    \begin{subfigure}[t]{0.32\textwidth}
        \caption{\BootRW}
        \includegraphics[height=4cm]{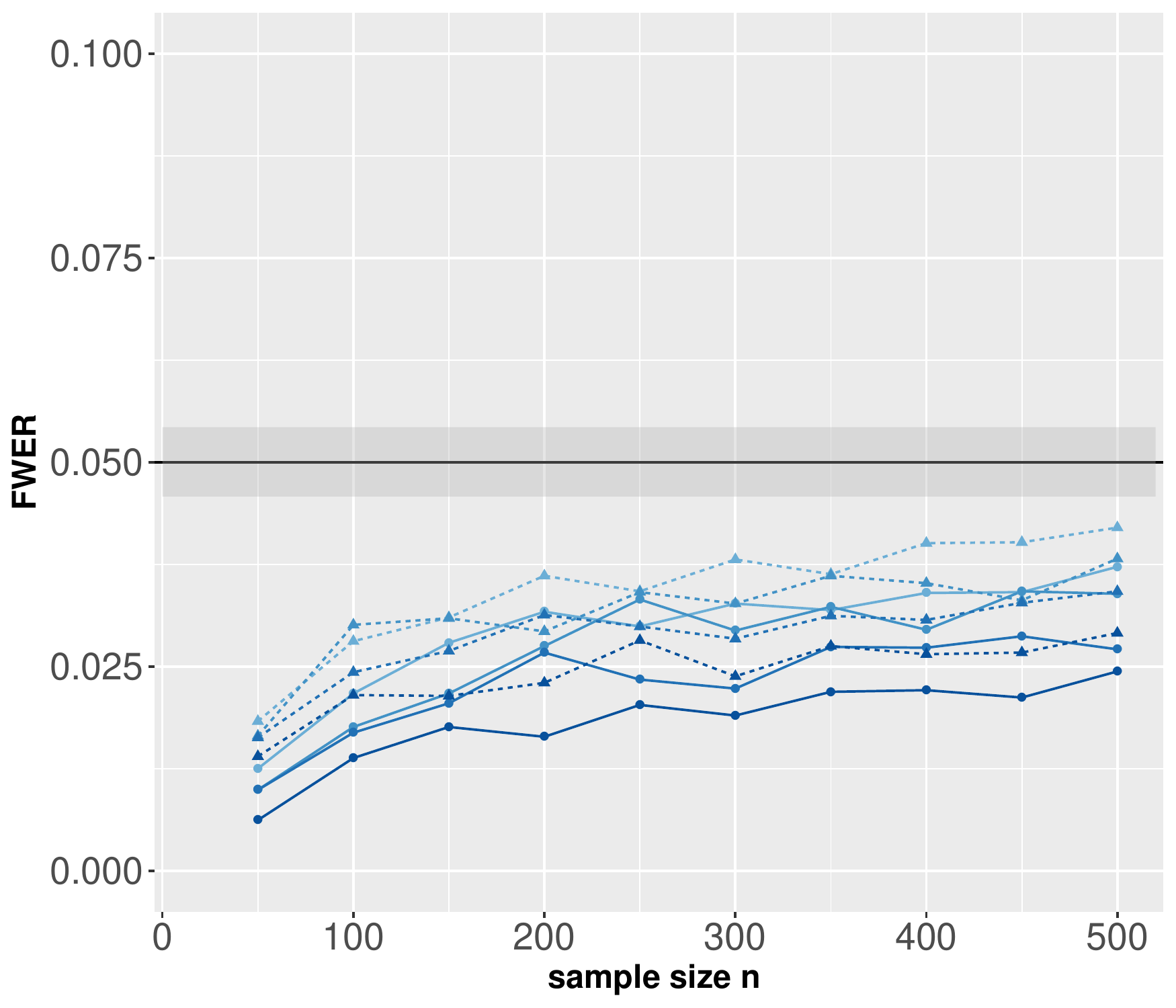}
        \end{subfigure}  \\
        \begin{subfigure}[t]{0.3\textwidth}
        \caption{\MaxT}
        \includegraphics[height=4cm]{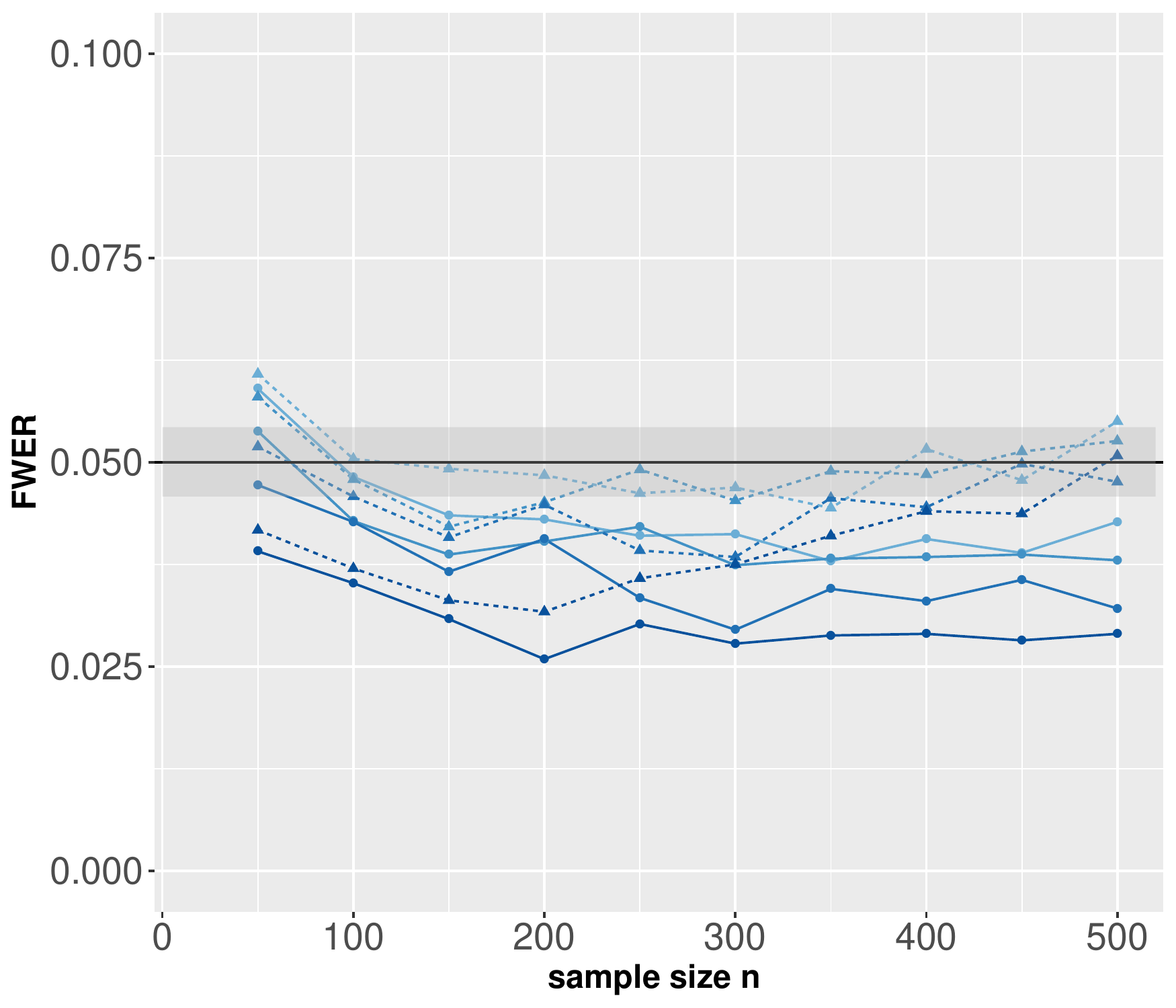}
        \end{subfigure}
                \begin{subfigure}[t]{0.32\textwidth}
        \caption{Oracle \MaxT}
        \includegraphics[height=4cm]{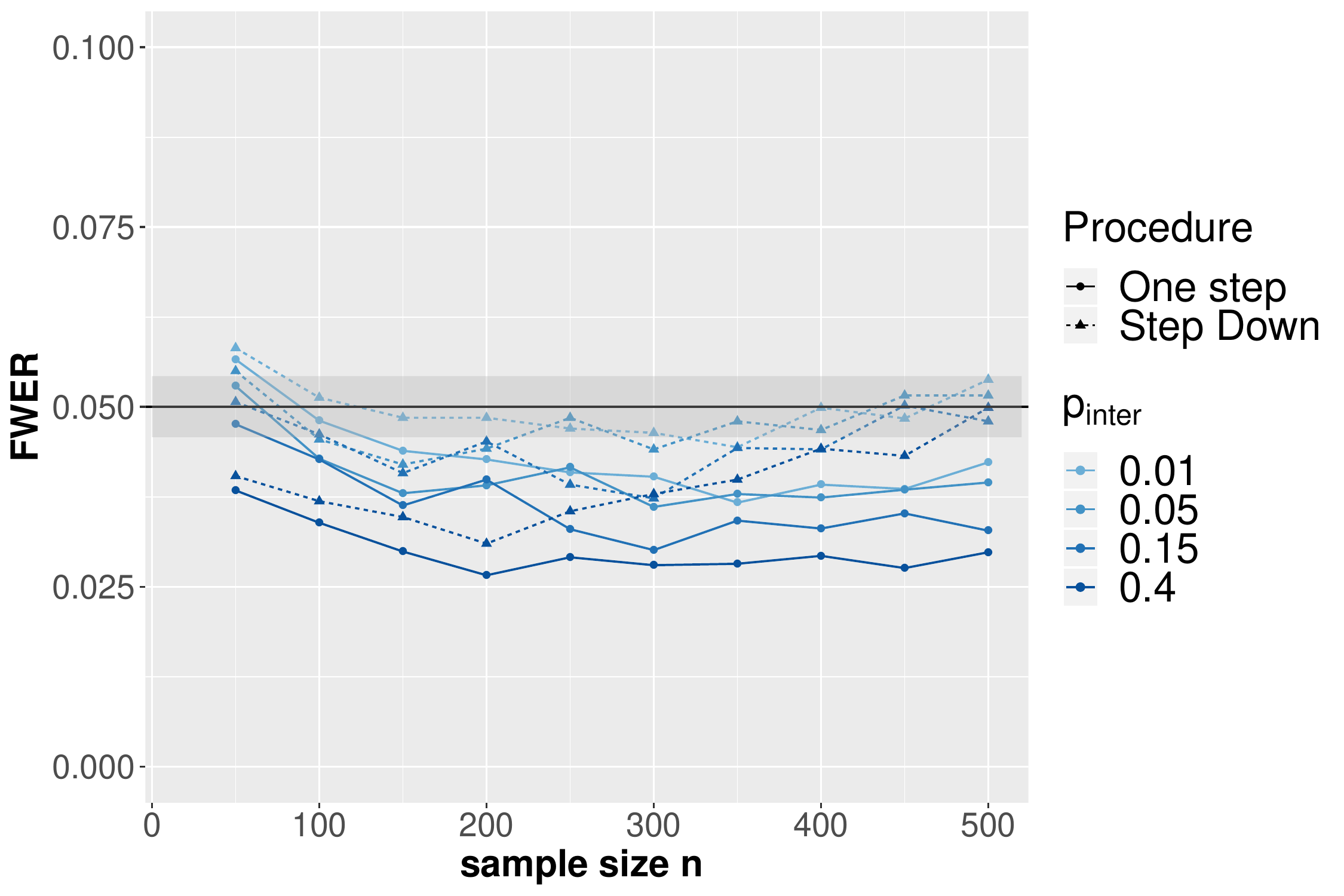}
        \end{subfigure}

\centering
\caption{Empirical FWER on 10000 simulations, with respect to the sample size $n$, for Gaussian statistics, $T^{(4)}$. The four sparsity frameworks are considered. $\rho=0.2$.}
  \label{fig:fwer2_gaussian}
    \begin{subfigure}[t]{0.3\textwidth}
        \caption{Bonferroni}
        \includegraphics[height=4cm]{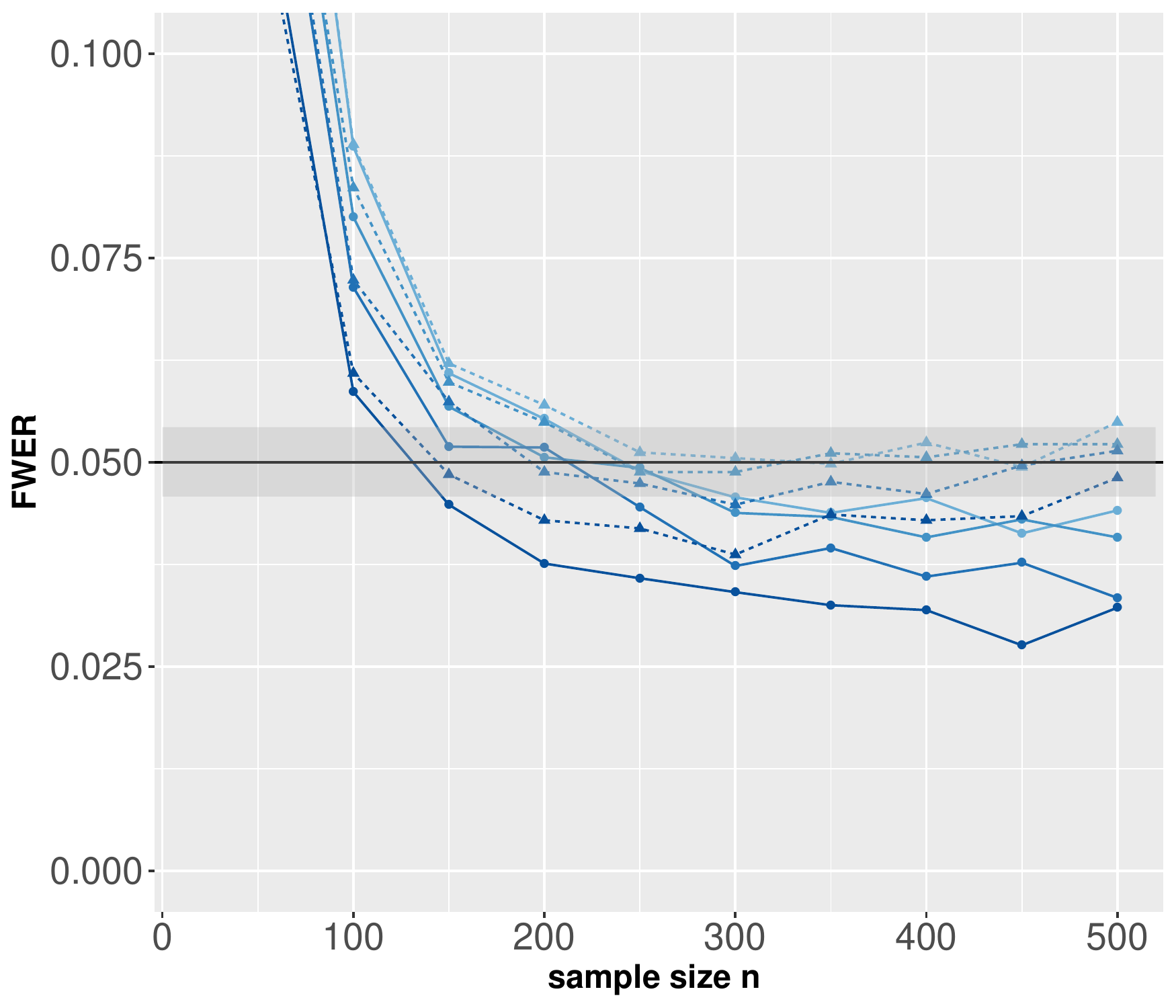}
        \end{subfigure}  
        \begin{subfigure}[t]{0.3\textwidth}
        \caption{\Sidak}
        \includegraphics[height=4cm]{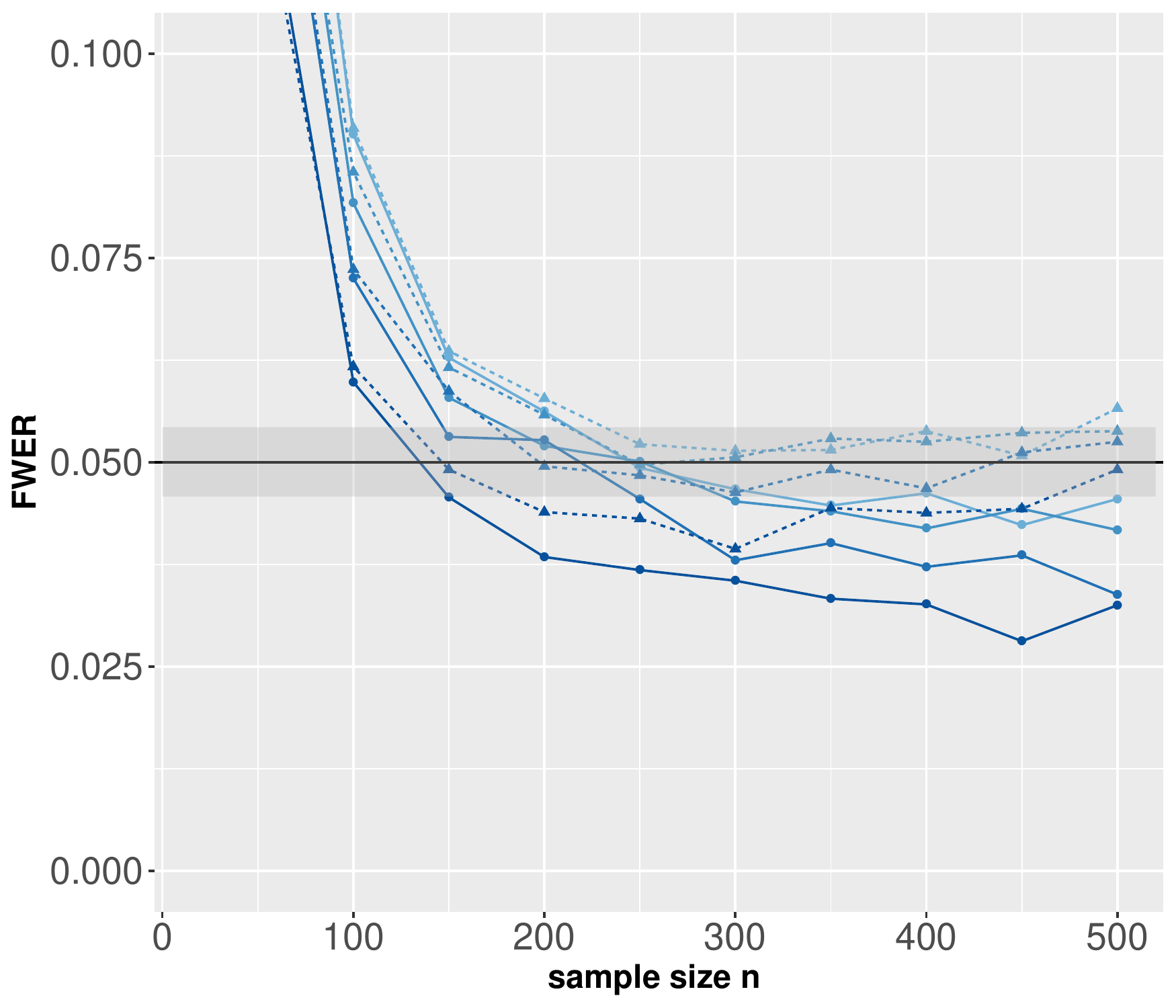}
        \end{subfigure} 
    \begin{subfigure}[t]{0.32\textwidth}
        \caption{\BootRW}
        \includegraphics[height=4cm]{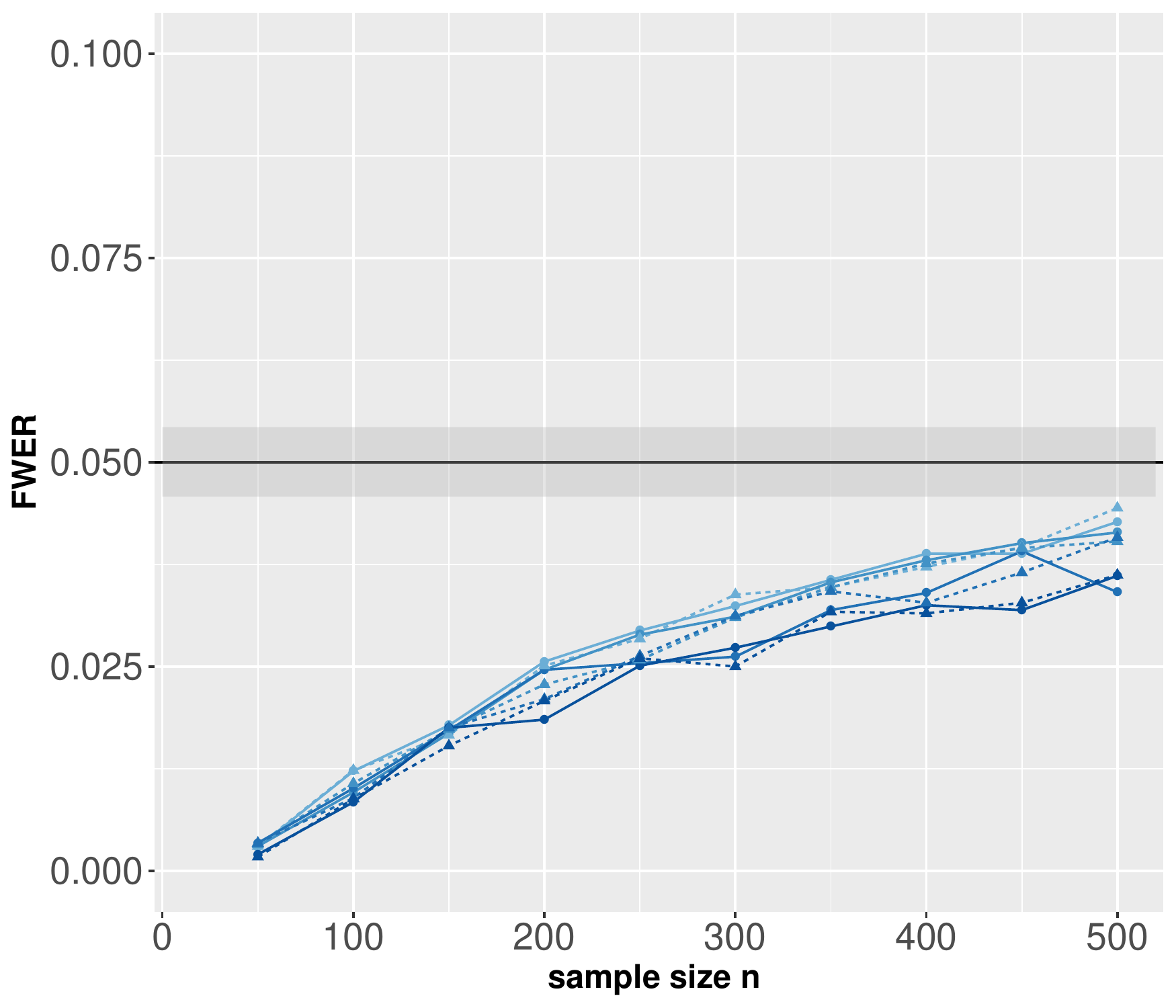}
        \end{subfigure} \\
        \begin{subfigure}[t]{0.3\textwidth}
        \caption{\MaxT}
        \includegraphics[height=4cm]{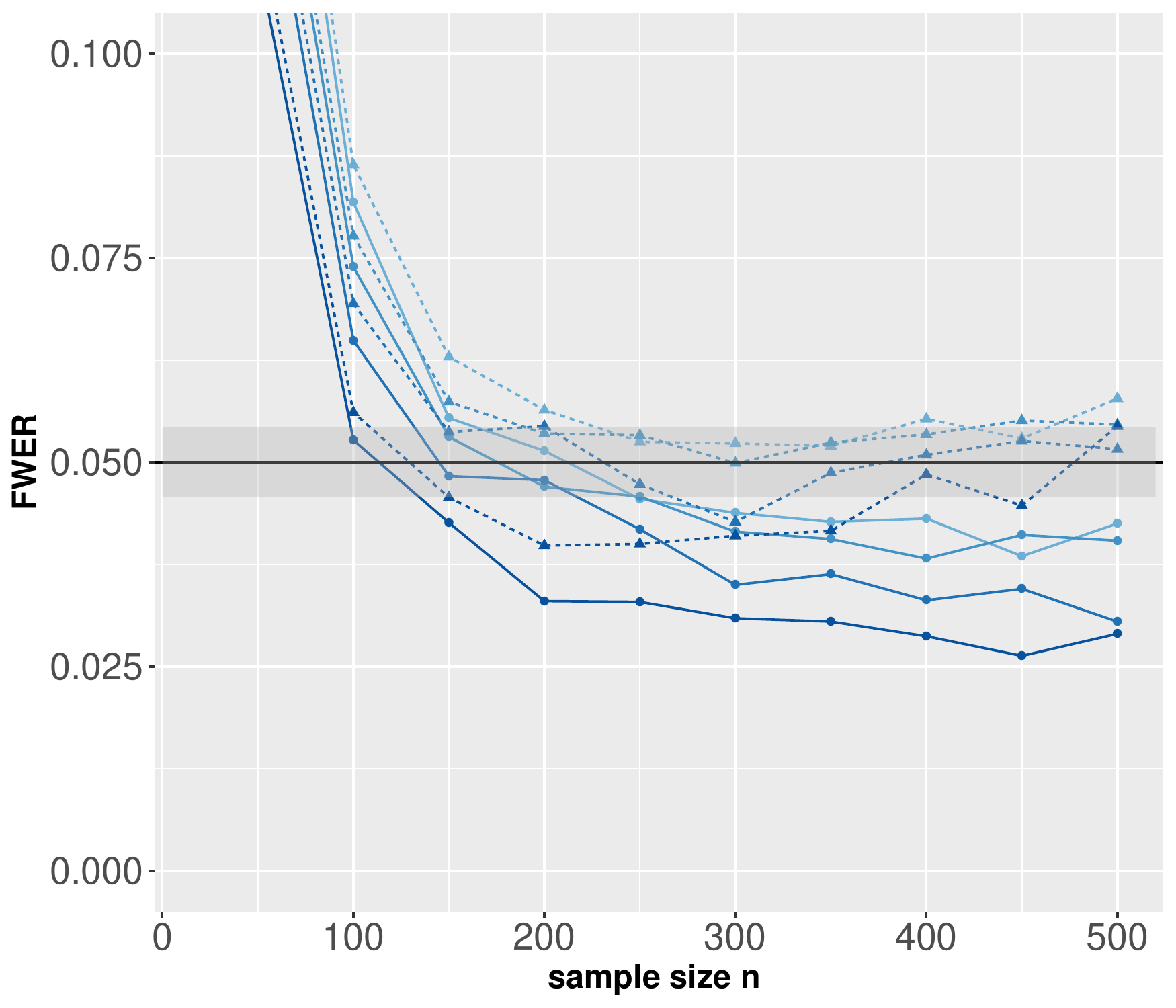}
        \end{subfigure}
                \begin{subfigure}[t]{0.32\textwidth}
        \caption{Oracle \MaxT}
        \includegraphics[height=4cm]{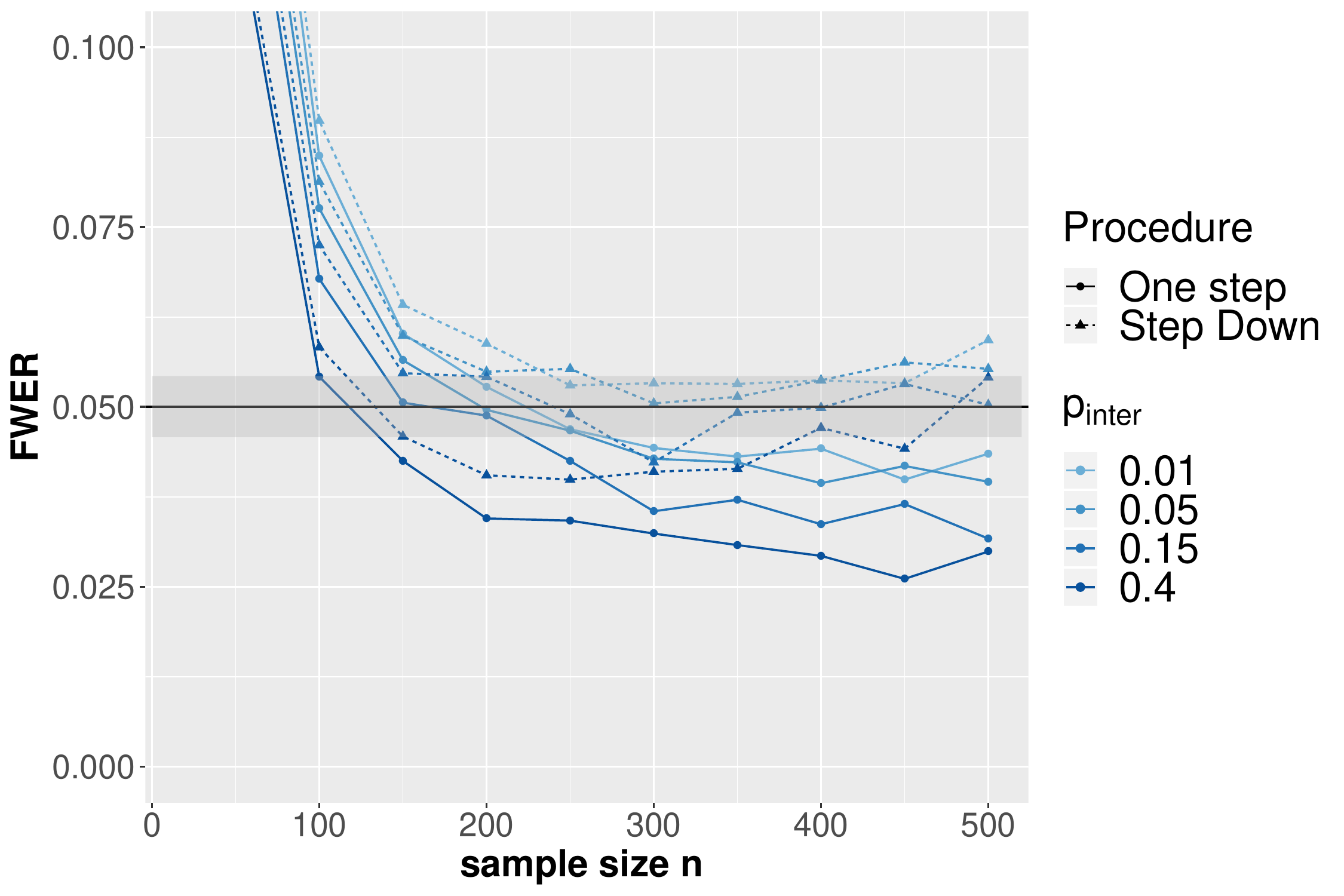}
        \end{subfigure}
\end{figure}

\subsection{Results on power}

This Section {provides a detailed evaluation of} the power of the multiple testing procedures on simulations. The quality is {computed} by the proportion of significant correlations detected, {also called }True Discovery Proportion, defined as \[\frac{\lvert\mathcal R\cap\{\mathcal H\setminus \mathcal H_0\}\rvert}{\lvert\mathcal H\setminus \mathcal H_0 \rvert},\] where for every sets $A$ and $B$, $A\setminus B=\{x\in A : x\notin B\}$.

First, a crucial point in many edges detection procedures is the graph sparsity (for instance \cite{bickel2008covariance}, \cite{ledoit2012nonlinear}, \cite{ravikumar2011high}, \cite{fan2016overview}). The sparsity is controlled here by the parameter $p_{inter}$: recall that the lower $p_{inter}$ is, the sparser the graph is. Figure~\ref{fig:power2_empirical} illustrates that with a multiple testing approach, the sparsity does not influence the power of the procedure, since the proportion of edges rightly detected is nearly constant with respect to the sparsity. 

Of course clearly, Figure~\ref{fig:power2_empirical} shows that the power increases with the sample size $n$.
 Indeed, with small $n$, the procedures are not able to detect significant correlations because the statistical tests cannot discriminate the null and the alternative hypothesis.  When the sample size is small, the variance of the empirical correlations on which the test statistics are based is high, see Figure~\ref{fig:hist1}.
 
Similarly, Figure~\ref{fig:power1_empirical} illustrates the failure of the procedures to detect significant correlations when the value of $\rho$ is small, here $\rho=0.1$. Again, this comes from the power of statistical tests rather than the multiple correction. Similar discussion can be found in  \cite{hero} (see {\it e.g.} Figure 2). Figures~\ref{fig:hist2} and \ref{fig:hist1} give an insight of maximal power reachable in our setting, respectively with $\rho=0.2$ and $\rho=0.1$. It also illustrates that indeed the sparsity does not influence the support of the empirical correlations distributions, which explains the stability of the power with respect to sparsity.

Comparison between the power of the procedure with respect to the test statistics is displayed in Table~\ref{tab:power_stat}. Only the case $\rho=0.2$, $p_{inter}=0.4$ and three values of sample size $n$ are considered for the sake of simplicity. Similar results are obtained in other settings. For all procedures except \BootRW, the Student statistics $T^{(3)}$ is the most powerful statistic. Concerning \BootRW, the empirical test statistics $T^{(1)}$ give the best results. Second-order statistic $T^{(4)}$ shows the lowest power for all multiple corrections.

{\cite{romano2005exact}'s non parametric bootstrap \BootRW~has the advantage of controlling FWER independently on the statistic and the sample size}. However it is less powerful than other step-down procedures. Results for step-down \MaxT~are high but this procedure may not control the FWER. The power of single-step parametric bootstrap \BootRW~is lower than Bonferroni and \Sidak~step-down procedures. 

We observe that the empirical test statistics $T^{(1)}$ always control the FWER. Thus this choice seems {particularly adequate}. {In addition,} the most powerful multiple correction is step-down \Sidak. If the sample size $n$ is sufficiently {large} so that the FWER control is acquired ($n\geq 200$ for the sparse model, see Figure~\ref{fig:fwer2_student}), step-down \Sidak~correction with Student statistics based tests gives the highest power. {An advantage of using Bonferroni and \Sidak~procedures lies in the fact that the computational time is very small.}

\begin{figure}
\centering
\caption{Empirical power on 10000 simulations, with respect to the sample size $n$, for empirical statistics, $T^{(1)}$. The four sparsity frameworks are considered. $\rho=0.2$. }
  \label{fig:power2_empirical}
    \begin{subfigure}[t]{0.3\textwidth}
        \caption{Bonferroni}
        \includegraphics[height=4cm]{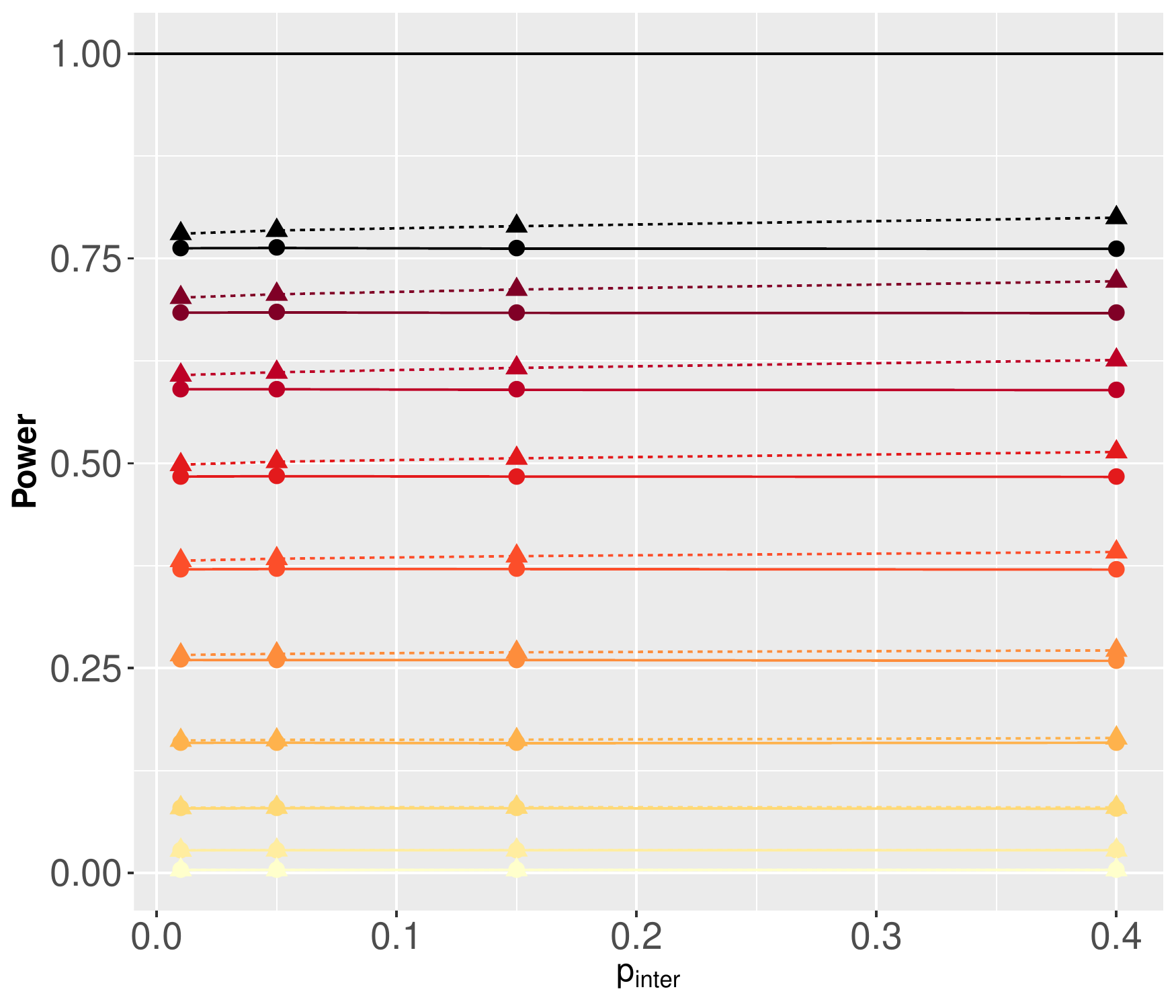}
        \end{subfigure}  
        \begin{subfigure}[t]{0.3\textwidth}
        \caption{\Sidak}
        \includegraphics[height=4cm]{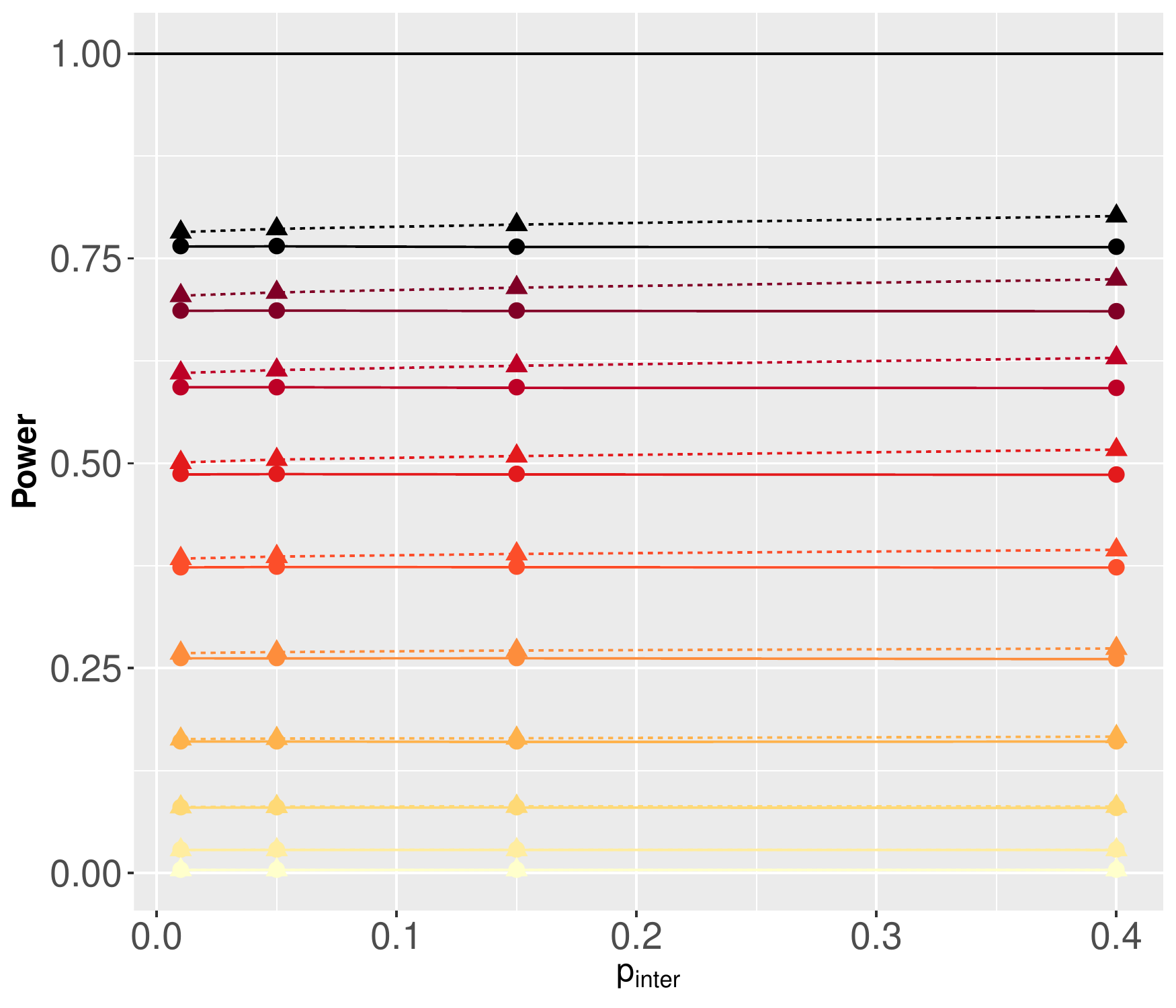}
        \end{subfigure} 
    \begin{subfigure}[t]{0.32\textwidth}
        \caption{\BootRW}
        \includegraphics[height=4cm]{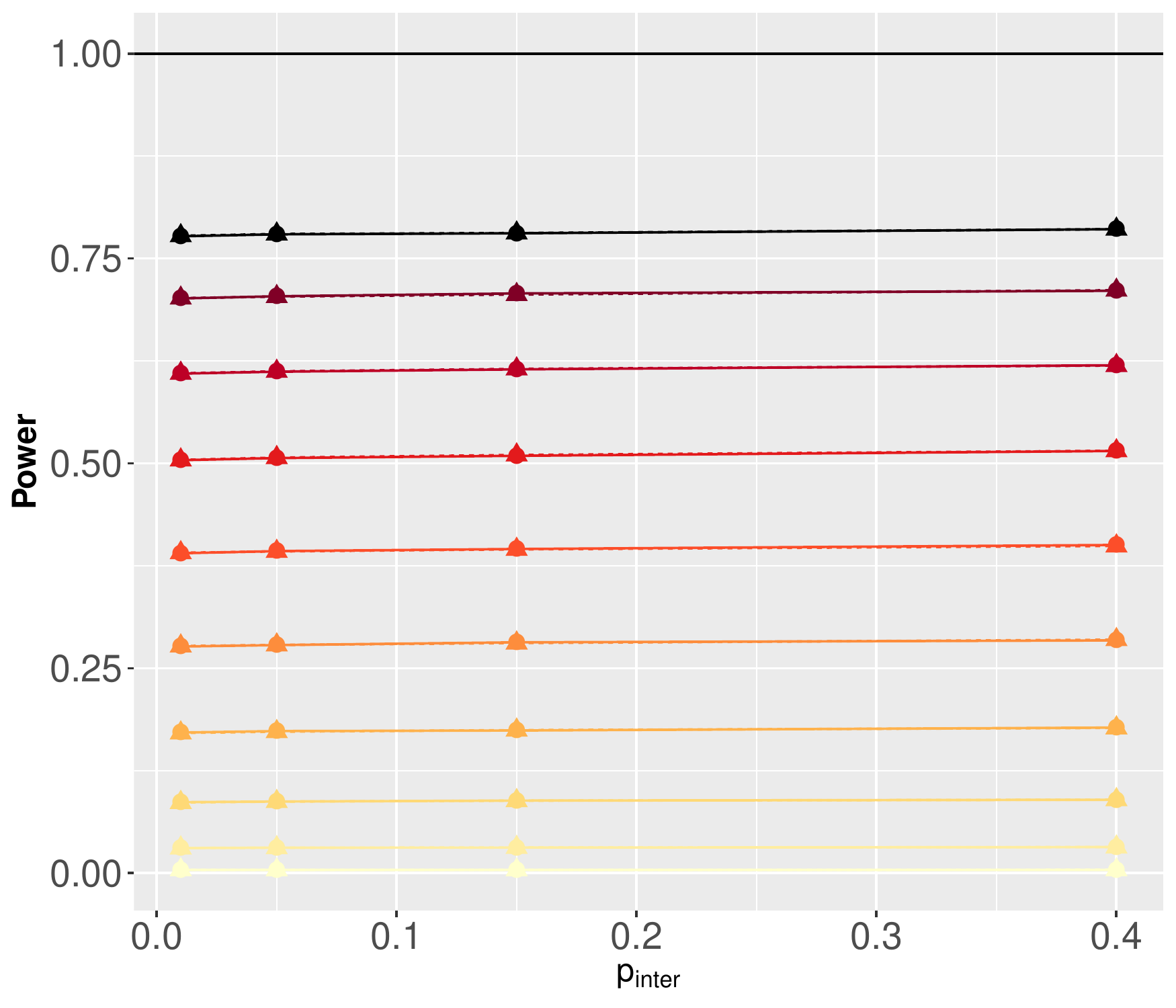}
        \end{subfigure}  \\
        \begin{subfigure}[t]{0.3\textwidth}
        \caption{\MaxT}
        \includegraphics[height=4cm]{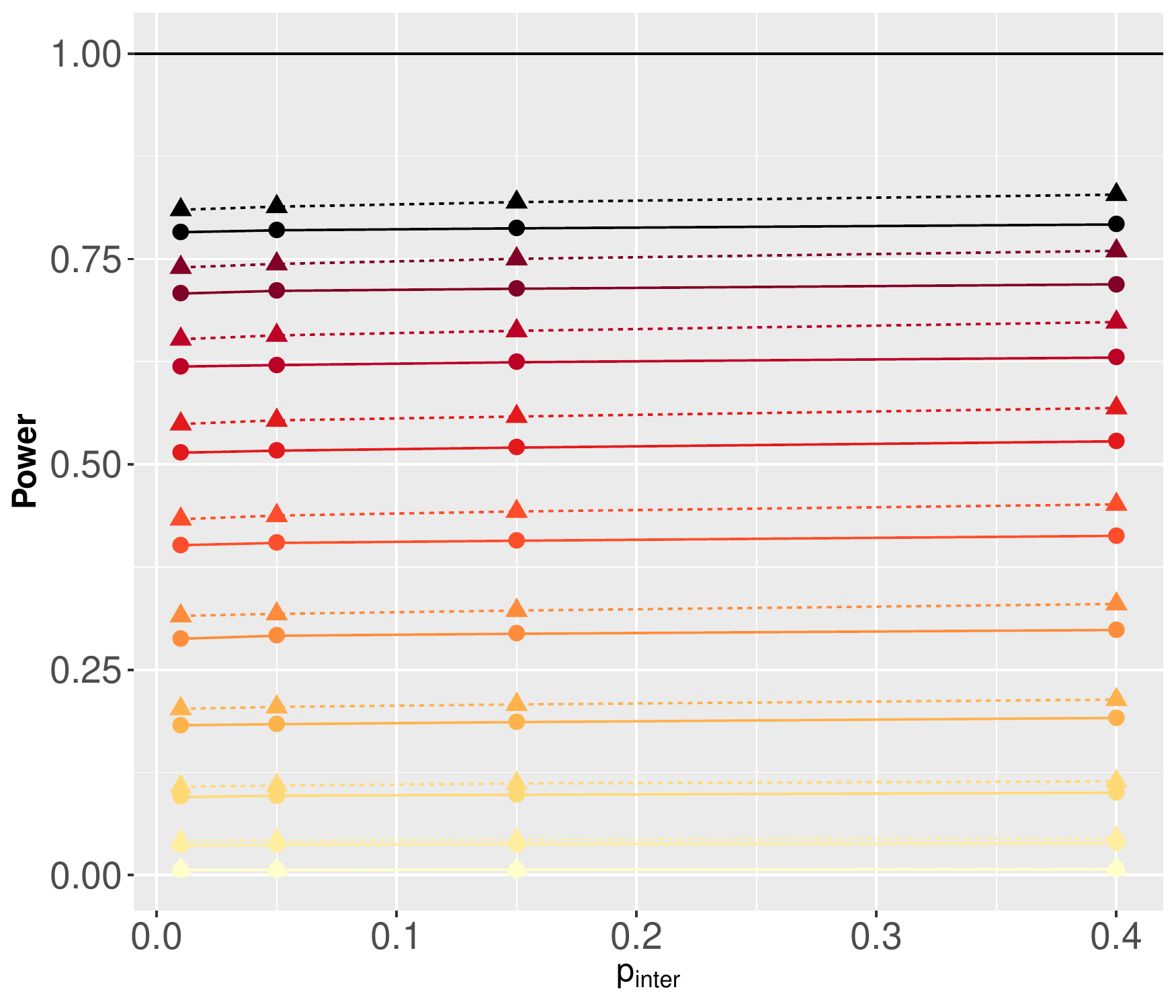}
        \end{subfigure}
                \begin{subfigure}[t]{0.32\textwidth}
        \caption{Oracle \MaxT}
        \includegraphics[height=4cm]{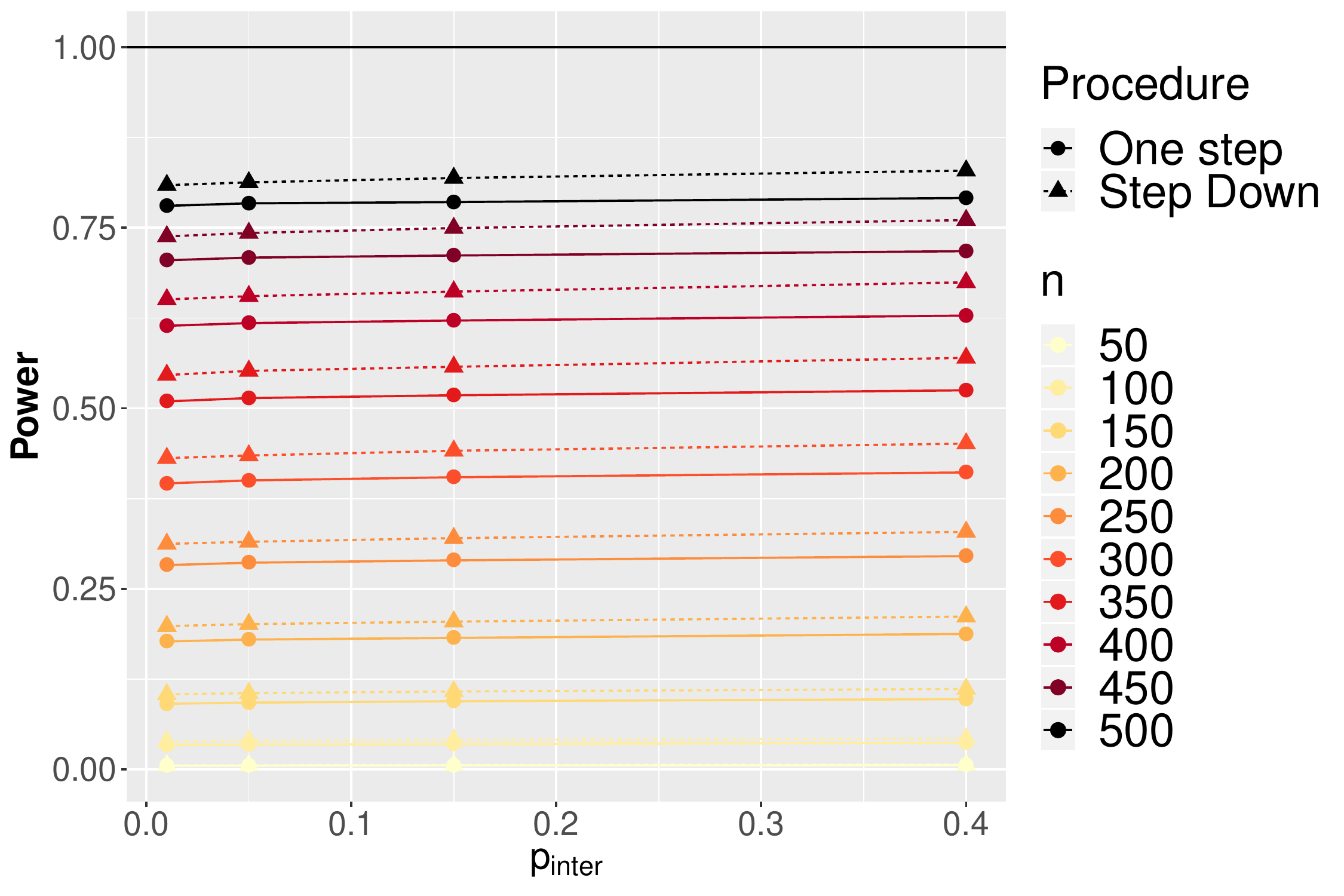}
        \end{subfigure}

\centering
\caption{Empirical power on 10000 simulations, with respect to the sample size $n$, for empirical statistics, $T^{(1)}$. The four sparsity frameworks are considered. $\rho=0.1$.}
  \label{fig:power1_empirical}
    \begin{subfigure}[t]{0.3\textwidth}
        \caption{Bonferroni}
        \includegraphics[height=4cm]{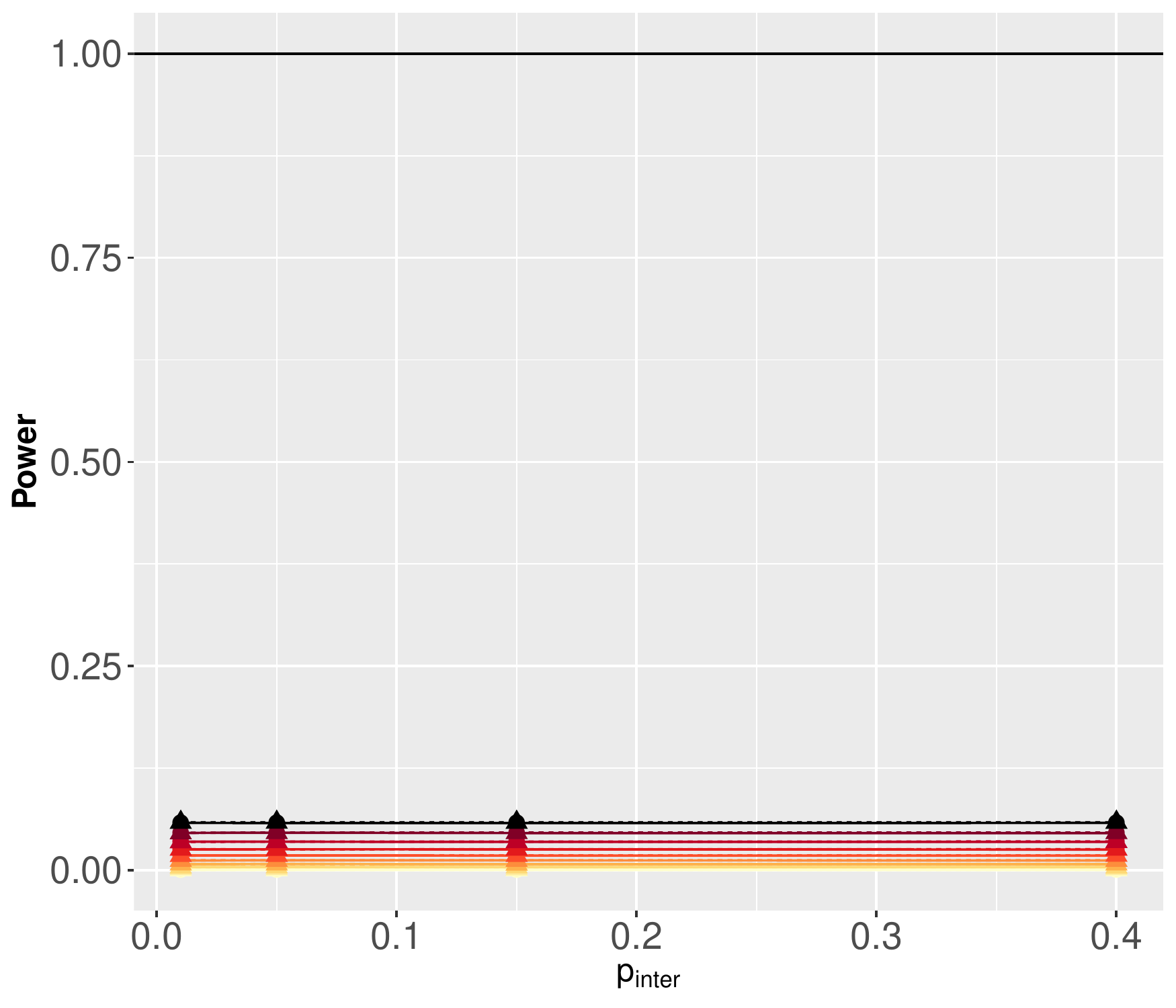}
        \end{subfigure}  
        \begin{subfigure}[t]{0.3\textwidth}
        \caption{\Sidak}
        \includegraphics[height=4cm]{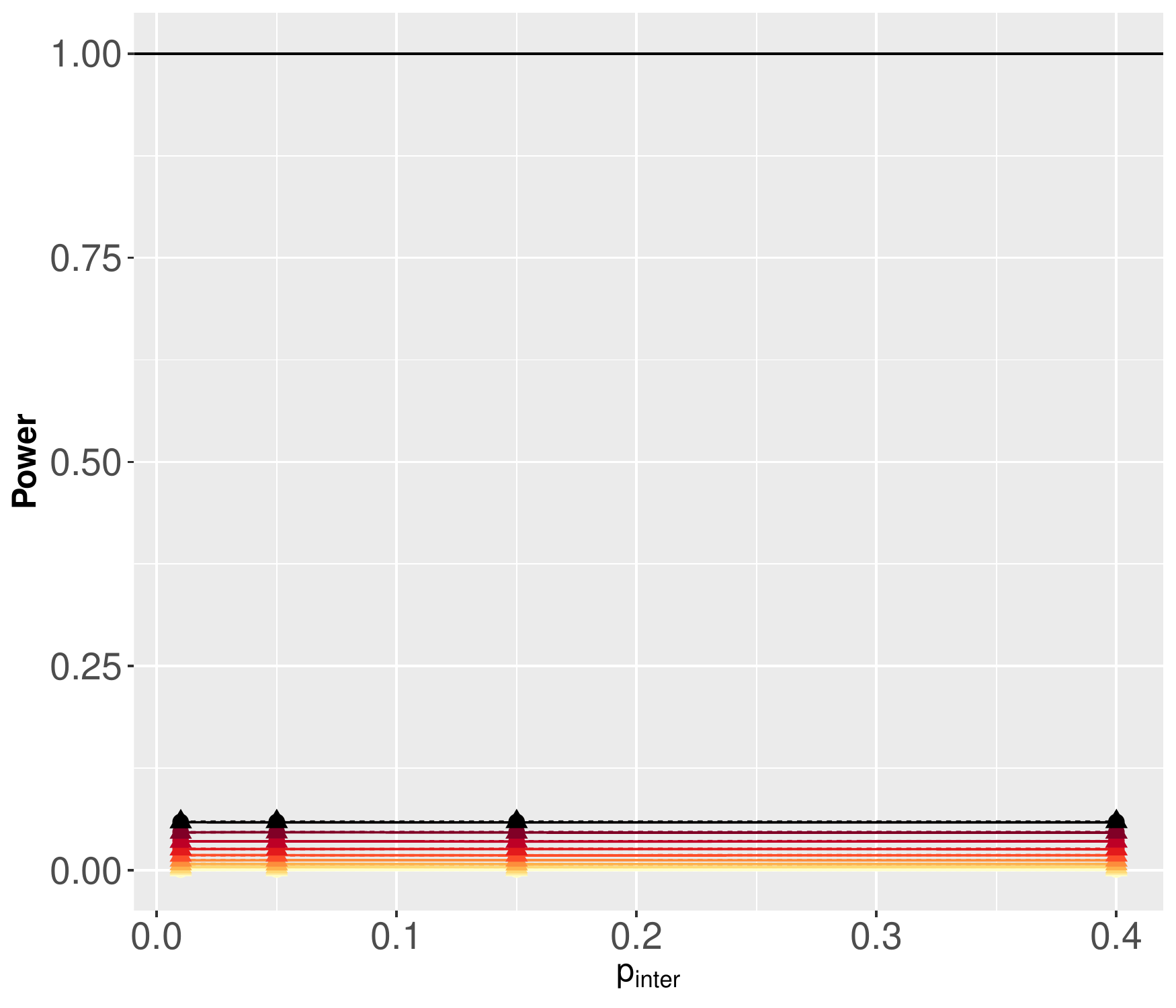}
        \end{subfigure} 
    \begin{subfigure}[t]{0.32\textwidth}
        \caption{\BootRW}
        \includegraphics[height=4cm]{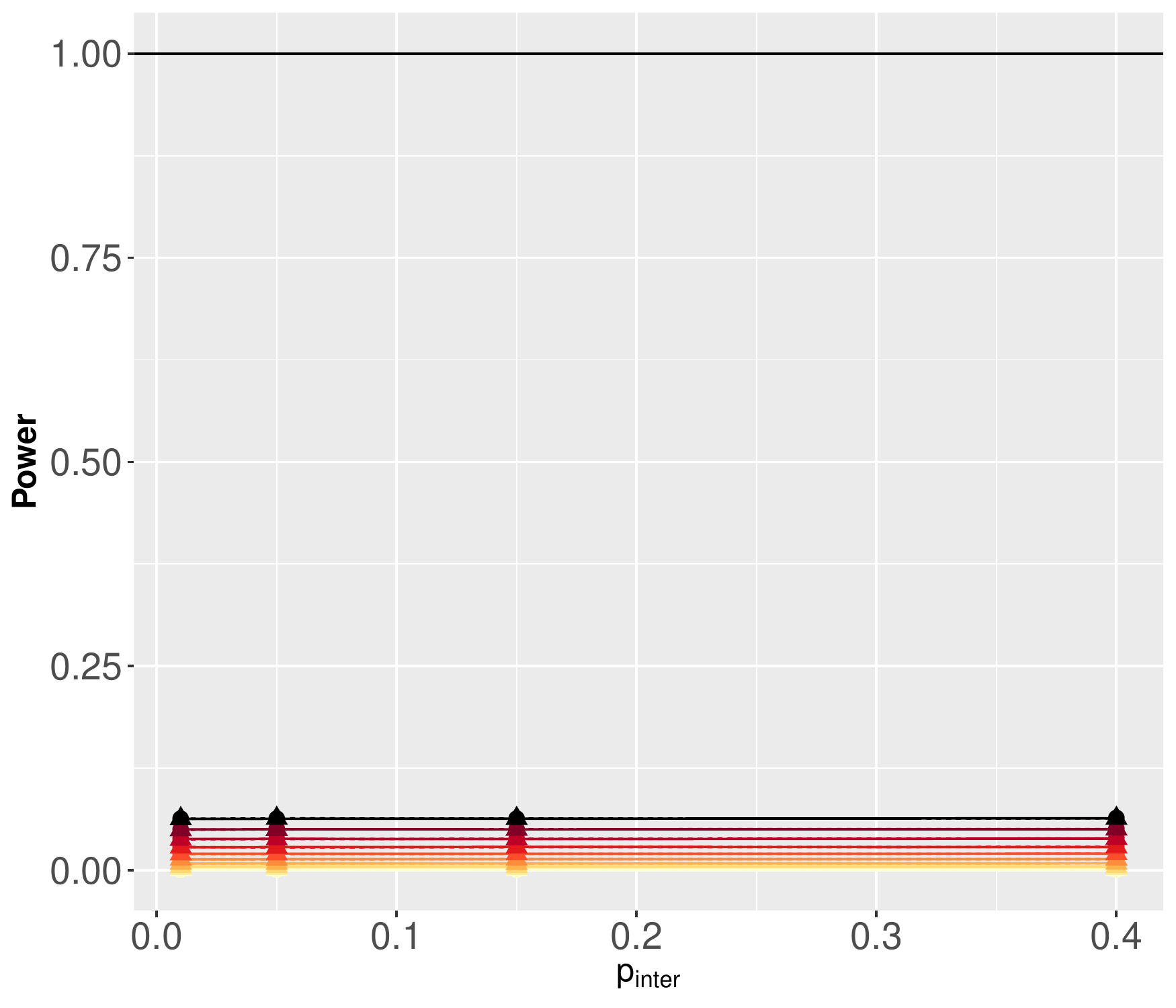}
        \end{subfigure}  \\
        \begin{subfigure}[t]{0.3\textwidth}
        \caption{\MaxT}
        \includegraphics[height=4cm]{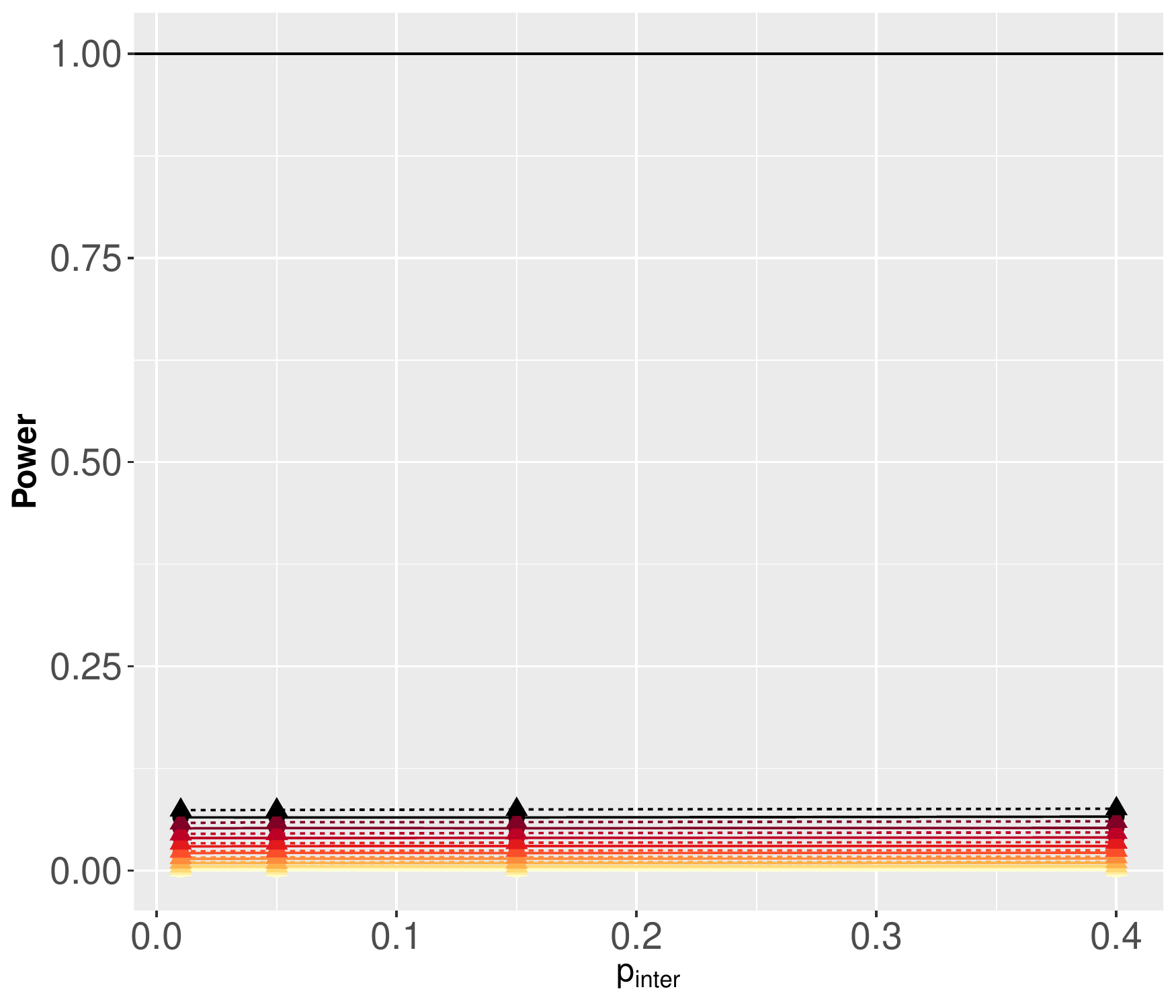}
        \end{subfigure}
                \begin{subfigure}[t]{0.32\textwidth}
        \caption{Oracle \MaxT}
        \includegraphics[height=4cm]{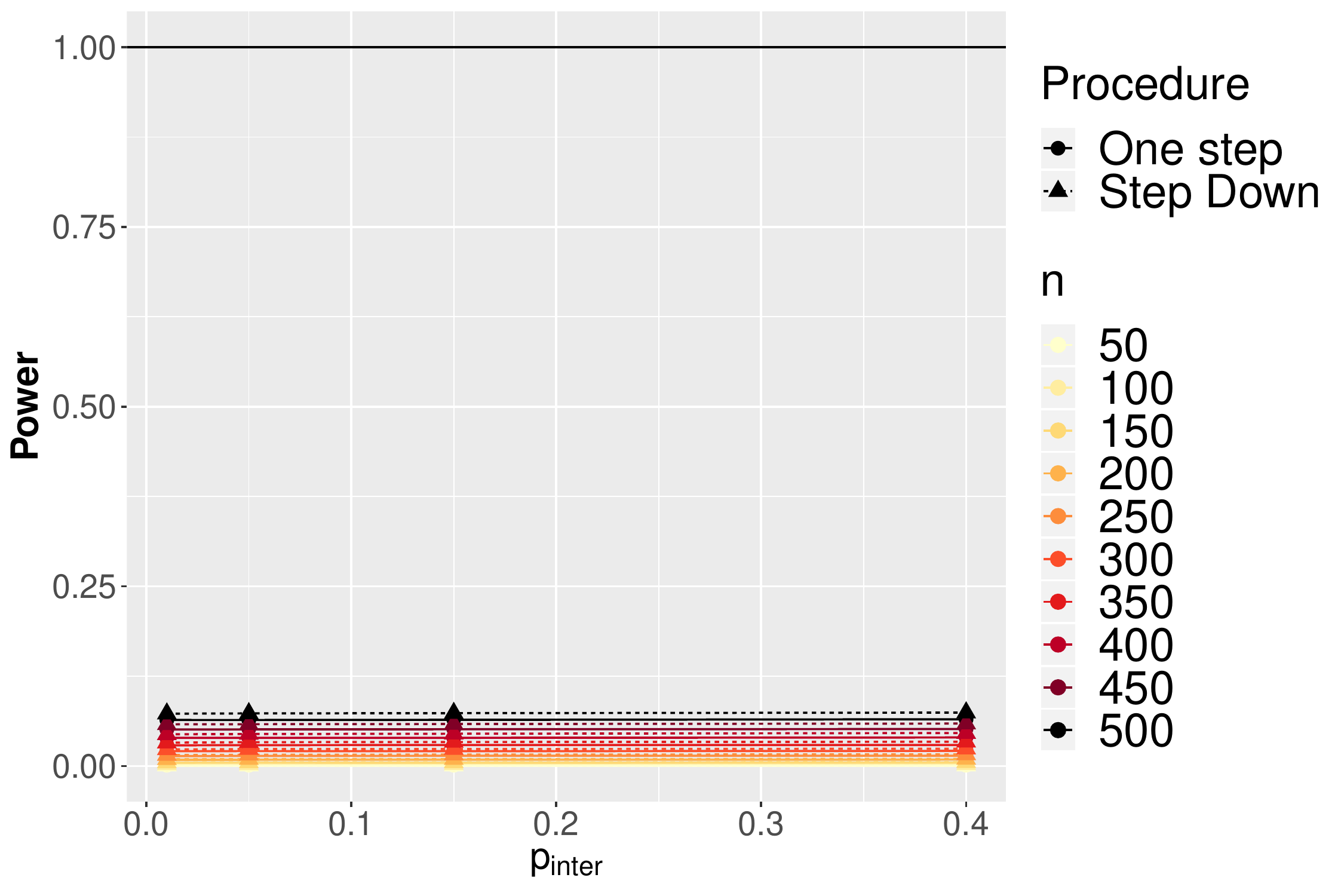}
        \end{subfigure}
\end{figure}

\begin{table}[!ht]
\centering

\caption{Empirical power on 10000 simulations, with respect to the multiple testing correction for sample size $n$ equal to 100, 300, 500. The adjacency matrix is given by $p_{inter}=0.4$, {\it i.e.} with the lowest sparsity. $\rho=0.2$. The left hand side value is the empirical power obtained with single-step methods and the right hand side value is obtained with step-down method.}
\label{tab:power_stat}
\begin{tabular}{lccccc}
\toprule
 \multicolumn{5}{c}{$n=100$}\\
\midrule
& Bonferroni & \Sidak & \BootRW & \MaxT & oracle \MaxT\\
Empirical $T^{(1)}$ & 0.028 $|$ 0.028 & 0.028 $|$ 0.028 & 0.032 $|$ 0.032 &  0.036 $|$ 0.038 &  0.034    $|$ 0.036 \\
Student~~ $T^{(2)}$ & {\bf 0.047} $|$ {\bf 0.047} & {\bf  0.047} $|$ {\bf 0.047}  & 0.024 $|$ 0.024 & 0.042 $|$ 0.045     &   {0.044} $|$ {\bf 0.047}\\
Fisher ~~~~$T^{(3)}$ & {\bf 0.047} $|$ {0.040} & {\bf 0.047} $|$ {0.040} &  0.024 $|$ 0.029 & 0.040  $|$ 0.043 &   0.040 $|$ { 0.042} \\
Gaussian ~$T^{(4)}$ & 0.026 $|$ 0.026 & 0.026 $|$ 0.026 & 0.006 $|$ 0.006 &  0.023 $|$  0.025   &  0.024  $|$   0.026    \\
\midrule
 \multicolumn{5}{c}{$n=300$}\\
\midrule
& Bonferroni & \Sidak & \BootRW & \MaxT & oracle \MaxT\\
Empirical $T^{(1)}$ & 0.370 $|$  0.392 & 0.373 $|$  0.394  & 0.400 $|$  0.399 & 0.405 $|$  0.422  & 0.401  $|$   {0.423}  \\
Student~~ $T^{(2)}$ & 0.400 $|$ 0.400 & 0.403 $|$ 0.403 &   0.357 $|$ 0.357 &  0.387 $|$ {\bf 0.424} &  0.389 $|$ 0.421\\
Fisher ~~~~$T^{(3)}$ & 0.400 $|$ 0.412 &  0.403 $|$ 0.414 &   0.357 $|$ 0.378 & 0.394 $|$ {0.422} &  0.393 $|$ {0.422} \\
Gaussian ~$T^{(4)}$ & 0.339 $|$ 0.359 & 0.342 $|$ 0.362 & 0.311 $|$ 0.311 &   0.328 $|$   0.360  &   0.330  $|$    0.358  \\
\midrule
 \multicolumn{5}{c}{$n=500$}\\
\midrule
& Bonferroni & \Sidak & \BootRW & \MaxT & oracle \MaxT\\
Empirical $T^{(1)}$ & 0.762 $|$ 0.800 & 0.764 $|$  0.802  & 0.786 $|$  0.786 & 0.787 $|$ 0.813   & 0.786 $|$  0.814\\
Student~~ $T^{(2)}$ & 0.776 $|$ 0.776 &  0.778 $|$ 0.778 &   0.752 $|$ 0.752 & 0.766 $|$ {\bf 0.818} &  0.767 $|$ 0.816\\
Fisher ~~~~$T^{(3)}$ &  0.776 $|$ 0.807 & 0.778 $|$ 0.809 &  0.752 $|$ 0.767 & 0.775 $|$ 0.816 &  0.774 $|$ {0.816} \\
Gaussian ~$T^{(4)}$ & 0.746 $|$ 0.787 & 0.748 $|$ 0.789 & 0.751 $|$ 0.751 &  0.737 $|$   0.793  &    0.738 $|$ 0.792  \\
\bottomrule
\end{tabular}

\end{table}

\begin{figure}
\centering
\caption{Histograms of the empirical correlations. Blue histograms correspond to true null hypothesis and red histograms correspond to observations under the alternative hypothesis $\rho=0.2$. On the first row the sparsity parameter satisfies $p_{inter}=0.01$ and on the second row $p_{inter}=0.4$. }
\label{fig:hist2}
\includegraphics[width=14cm]{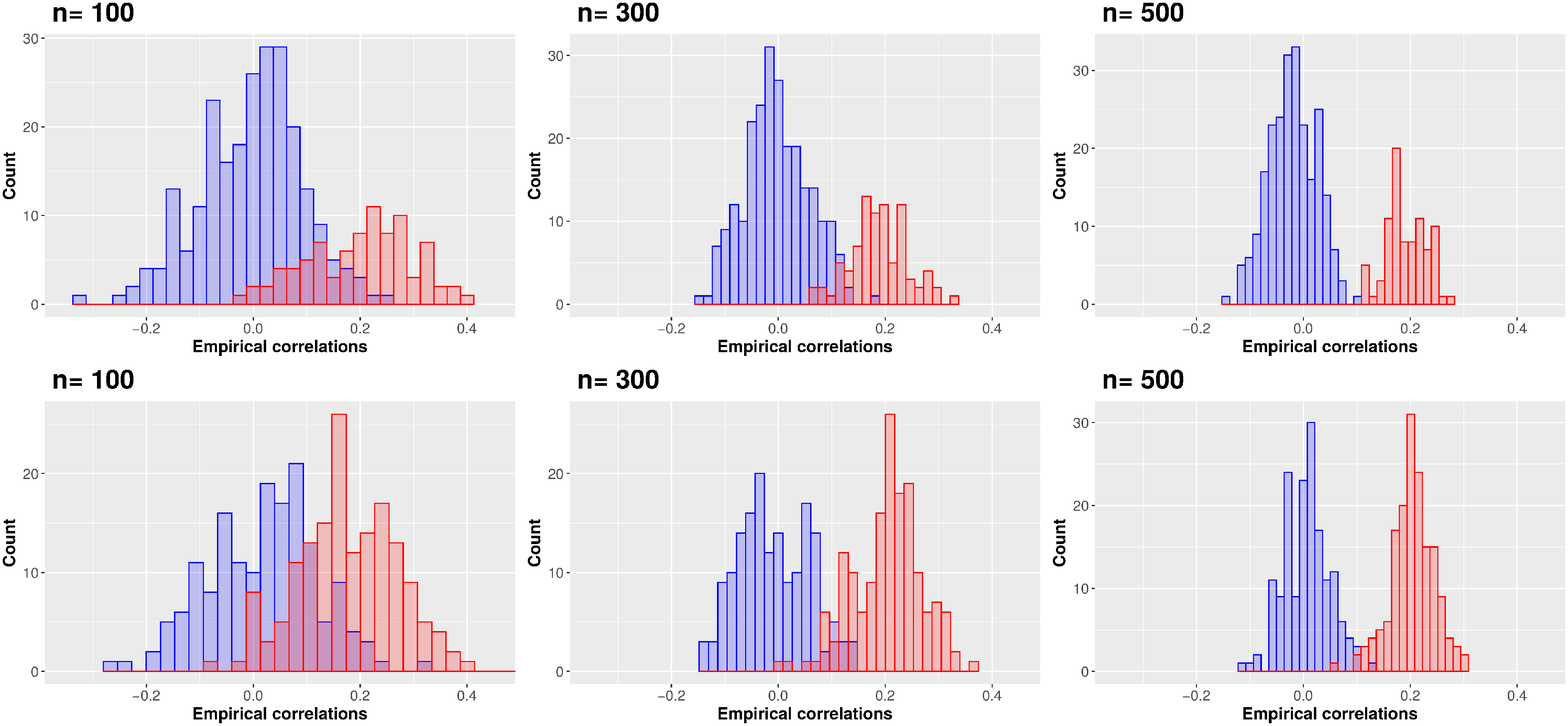}

\centering
\caption{Histograms of the empirical correlations. Blue histograms correspond to true null hypothesis and red histograms correspond to observations under the alternative hypothesis $\rho=0.1$. On the first row the sparsity parameter satisfies $p_{inter}=0.01$ and on the second row $p_{inter}=0.4$. }
\label{fig:hist1}
\includegraphics[width=14cm]{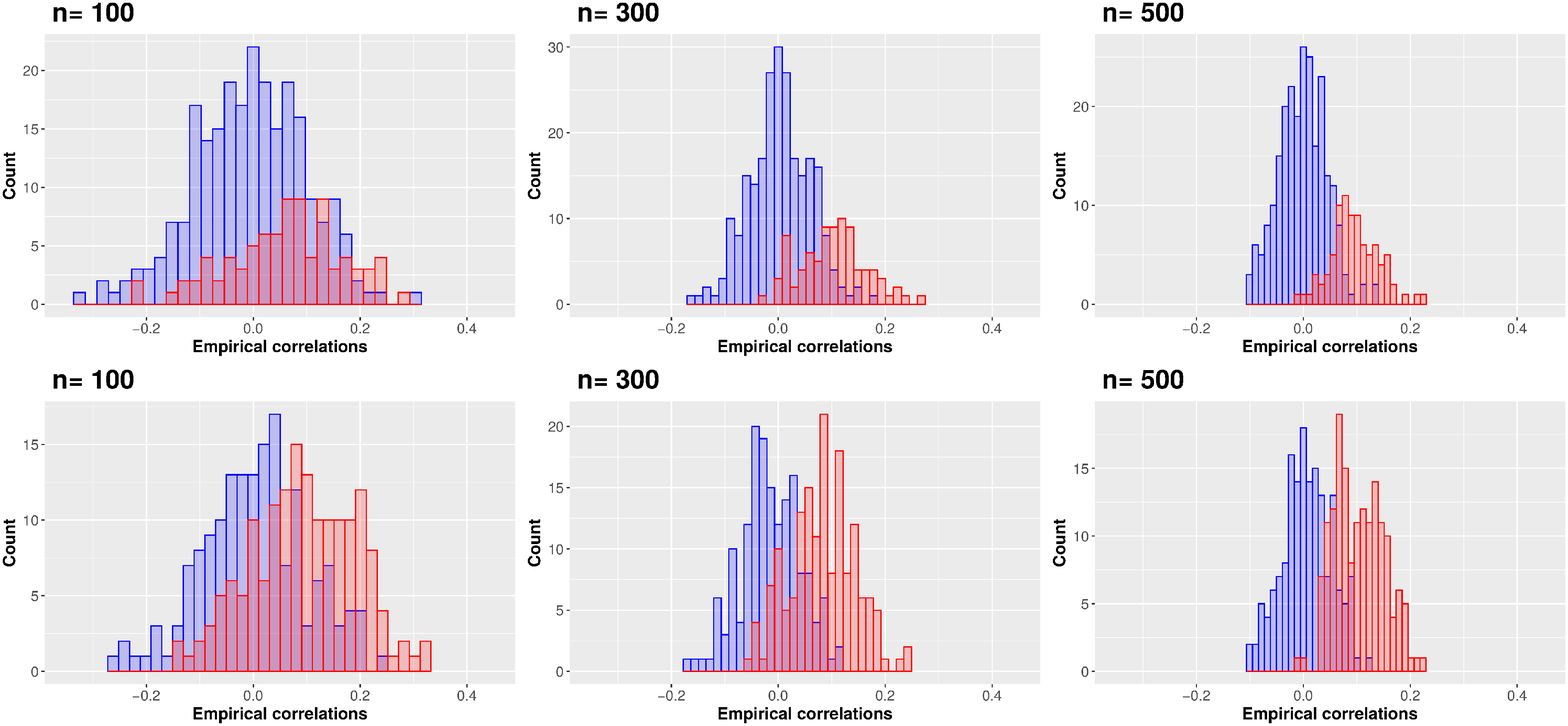}
\end{figure}

\section{Real data}

We apply our methods on real data sets from neuroscience. We use functional Magnetic Resonance images (fMRI) of both dead and alive rats. The datasets are freely available \href{https://zenodo.org/record/2452871}{10.5281/zenodo.2452871} \citep{becq_10.1088/1741-2552/ab9fec,guillaume2020functional}. The aim is to estimate the brain connectivity, that is the significant correlations between brain regions where fMRI signals are recorded. For this data set, we know that for dead rats we are under the full null hypothesis. Thus the estimated graphs should be empty. We also expect non-empty graphs for alive rats.

\subsection{Description of the dataset}

Functional Magnetic Resonance Images (fMRI) were acquired for both dead and alive rats in \cite{Pawela2008}. Rats $1$ to $4$ were scanned dead and rats $5$ to $11$ were scanned alive under anesthetic. The duration of the scanning is 30 minutes with a time repetition of 0.5 second so that $3600$ time points are available at the end of experience. After preprocessing as explained in \cite{Pawela2008}, $51$ time series for each rat were extracted. Each time series capture the functioning of a given region of the rat brain based on an anatomical atlas.

The time series resulting from fMRI experiments are nonstationary with long memory properties, which is not convenient from a mathematical point of view. To avoid such properties, the correlation coefficients are estimated in the wavelet transform domain and then the statistical tests are based on wavelet correlation coefficients, as described in \cite{achard.2014.1, moulines.2007.1}.

First we are decomposing each time series using a wavelet basis and then for each wavelet scale we are studying all the possible pairs of correlations.

Let $(\phi(\cdot),\psi(\cdot))$ be respectively a father and a mother wavelets. At a given resolution $j\geq 0$, for $k\in\mathbb Z$, we define the dilated and translated functions $\phi_{j,k}(\cdot)=2^{-j/2}\phi(2^{-j}\cdot -k)$ and $\psi_{j,k}(\cdot)=2^{-j/2}\psi(2^{-j}\cdot -k)$. Let $\tilde{\mathbf X}(t)=\sum_{k\in\mathbb Z}\mathbf  X(k)\phi(t-k)$. The wavelet coefficients of the process $\mathbf X$ are defined by 
\[
\mathbf W_{j,k}=\int_\R \tilde{\mathbf X}(t)\psi_{j,k}(t)dt\quad j\geq 0,\; k\in\mathbb Z.
\]
For given $j\geq 0$ and $k\in\mathbb Z$, $\mathbf W_{j,k}$ is a $p$-dimensional vector \[\mathbf W_{jk}=\begin{pmatrix}
W_{j,k}(1) & W_{j,k}(2) & \dots & W_{j,k}(p) \end{pmatrix},\] where $W_{j,k}(\ell)= \int_\R \tilde{X_\ell}(t)\psi_{j,k}(t)dt$.

Let $z$ be the Fisher transform, for all $x$, $-1 \leq x \leq 1$, 
$$z(x):=\frac{1}{2}\log \frac{1+x}{1-x} = \tanh^{-1}(x).$$
We fix a scale $j$ and the tests are based on the correlation of wavelets coefficients at scale $j$. Define
\begin{align*}
{\rho_{\ell m}}(j) &=  \frac{\gamma_{\ell m}(j)}{(\gamma_{\ell \ell}(j)\gamma_{m m}(j))^{1/2}}, \quad \text{~with~} \gamma_{\ell m}(j) :=\int \mathcal H_j(\lambda) \, f_{\ell m}(\lambda) \, d\,\lambda\\
\hat{\rho_{\ell m}}(j) &= \frac{\sum_{k=0}^{n_j}(W_{j,k}(\ell)W_{j,k}(m))}{(\sum_{k=0}^{n_j}W^2_{j,k}(\ell)\sum_{k=0}^{n_j}W^2_{j,k}(m))^{1/2}} ,
\end{align*}
where $\mathcal H_j$ denotes the squared gain filter of wavelet transform at scale $j$. Note that quantities $\rho_{\ell m}(j) $ depends on the memory properties of the time series, as described among others in \cite{achard.gannaz.2019}.

The convergence of wavelet correlations with Fisher transform is given by the following property.

\begin{prop}[\cite{spie2019}]
\label{prop.fisher.cor}
Under the regularity hypotheses on the wavelet transform, the estimator of empirical correlation verifies
\begin{equation}
\sqrt{(n_j-3)}(z(\widehat{\rho_{\ell m}}(j))-z(\rho_{\ell m}(j))) \mathop{\longrightarrow}^{\mathcal{L}} \mathcal{N}(0,1)\,;
\end{equation}
\end{prop}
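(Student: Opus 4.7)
The plan is to reduce the wavelet correlation statement to the classical i.i.d. Gaussian correlation CLT already recalled in Proposition~\ref{prop:corrgauss} and Corollary~\ref{cor:stat}, and then apply the delta method with the Fisher transform. The regularity hypotheses on the wavelet transform (number of vanishing moments exceeding the memory/long-range-dependence exponent, compact support, sufficient smoothness) are exactly designed so that, at each fixed scale $j$, the $n_j$ wavelet vectors $\mathbf{W}_{j,k}$, $k=0,\dots,n_j-1$, behave as approximately stationary and short-range dependent — in the setting of \cite{moulines.2007.1,achard.2014.1,achard.gannaz.2019} their covariance sequence is summable and the within-scale correlation structure converges to $\rho_{\ell m}(j) = \gamma_{\ell m}(j)/(\gamma_{\ell\ell}(j)\gamma_{mm}(j))^{1/2}$.

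First, I would invoke the wavelet CLT for second-order empirical quantities: under the stated regularity, $(\sum_k W_{j,k}(\ell)W_{j,k}(m))/n_j$ and the diagonal sums $(\sum_k W_{j,k}(\ell)^2)/n_j$ converge in probability to $\gamma_{\ell m}(j)$ and $\gamma_{\ell\ell}(j)$, and they satisfy a joint central limit theorem with rate $\sqrt{n_j}$. This is the wavelet analogue of the classical empirical covariance CLT and is already available in \cite{moulines.2007.1,achard.gannaz.2019}. Combining these through the smooth map $(a,b,c)\mapsto a/\sqrt{bc}$ via the delta method yields
\begin{equation*}
\sqrt{n_j}\bigl(\widehat{\rho_{\ell m}}(j)-\rho_{\ell m}(j)\bigr)\;\mathop{\longrightarrow}^{\mathcal{L}}\;\mathcal{N}\!\bigl(0,\sigma^2(j)\bigr),
\end{equation*}
for some asymptotic variance $\sigma^2(j)$ that, in the Gaussian regime at scale $j$, has the Isserlis/Gaussian-moment form $(1-\rho_{\ell m}(j)^2)^2$ exactly as in the univariate entry $\omega_{ij,ij}$ of Proposition~\ref{prop:corrgauss}.

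Second, I apply the delta method to $z(x)=\tfrac12\log\tfrac{1+x}{1-x}$, whose derivative is $z'(x)=1/(1-x^2)$. Hence
\begin{equation*}
\sqrt{n_j}\bigl(z(\widehat{\rho_{\ell m}}(j))-z(\rho_{\ell m}(j))\bigr)\;\mathop{\longrightarrow}^{\mathcal{L}}\;\mathcal{N}\!\left(0,\frac{\sigma^2(j)}{(1-\rho_{\ell m}(j)^2)^2}\right)=\mathcal{N}(0,1).
\end{equation*}
The replacement of $\sqrt{n_j}$ by $\sqrt{n_j-3}$ is innocuous because $\sqrt{(n_j-3)/n_j}\to 1$, and Slutsky closes the argument. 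This mirrors the $k=3$ column of Corollary~\ref{cor:stat}, where $\Omega^{(3)}$ is obtained by dividing $\Omega$ by $(1-\rho^2)(1-\rho^2)$; on the diagonal, this normalizes to $1$.

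The main obstacle is the first step: justifying the wavelet CLT under the paper's regularity hypotheses, since the raw time series are nonstationary with long memory and so the standard i.i.d. invariance principle does not apply directly. The crucial inputs are (i) the decorrelating effect of wavelets with enough vanishing moments on long-memory processes, giving a summable within-scale covariance, and (ii) the fact that normalization by $\sqrt{n_j}$ is the correct rate at each fixed scale. Once these are granted from \cite{moulines.2007.1,achard.gannaz.2019}, the rest is a routine delta-method computation; the non-trivial content is really that the variance simplifies to exactly $1$, which is guaranteed by the Gaussian correlation variance formula $(1-\rho^2)^2$ combined with $(z')^2=(1-\rho^2)^{-2}$ evaluated at the true $\rho_{\ell m}(j)$.
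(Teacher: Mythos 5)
First, a point of comparison: the paper itself gives no proof of Proposition~\ref{prop.fisher.cor} --- it is imported verbatim from \cite{spie2019} and does not appear in the Appendix --- so there is no internal argument to match your proposal against; the assessment below is of your sketch on its own merits.

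Your overall strategy (a CLT for the second-order wavelet quantities, the delta method through $(a,b,c)\mapsto a/\sqrt{bc}$, then the delta method through $z$ with $z'(x)=1/(1-x^2)$, and Slutsky to absorb $\sqrt{(n_j-3)/n_j}\to 1$) is the right skeleton, and the final normalization $\sigma^2(j)/(1-\rho_{\ell m}(j)^2)^2=1$ is the right bookkeeping \emph{if} $\sigma^2(j)=(1-\rho_{\ell m}(j)^2)^2$. The genuine gap is precisely that identification of $\sigma^2(j)$. You justify the CLT by invoking a \emph{summable} within-scale covariance of the coefficients $\mathbf W_{j,k}$, but for a stationary, merely short-range-dependent Gaussian sequence the asymptotic variance of the empirical correlation is a long-run variance, i.e.\ a sum over all lags $h$ of fourth-order (Isserlis) contributions involving the within- and cross-autocovariances of $(W_{j,k}(\ell),W_{j,k}(m))_k$; it collapses to the lag-zero i.i.d.\ formula $(1-\rho_{\ell m}(j)^2)^2$ only if the coefficients are uncorrelated across $k$ within the scale. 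Wavelet coefficients of long-memory processes at a fixed scale are only \emph{approximately} decorrelated, so your two premises --- ``summable covariance, hence a $\sqrt{n_j}$-CLT'' and ``variance equal to the i.i.d.\ Gaussian value'' --- are in tension: the first delivers a limit variance that the second silently replaces. Closing this gap (showing that the nonzero-lag contributions are asymptotically negligible, or that the within-scale spectrum is flat enough under the stated vanishing-moment/regularity hypotheses) is exactly the nontrivial content delegated to \cite{spie2019, moulines.2007.1, achard.gannaz.2019}, and without it the limit law would be $\mathcal N(0,\sigma^2(j)/(1-\rho_{\ell m}(j)^2)^2)$ with a scale-dependent variance not equal to $1$. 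The rest of your argument is routine and correct.
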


A discussion on the choice of the relevant scale for analysis can be found in \cite{spie2019}. From there and the literature in neuroscience, it is convenient and adequate to focus on low frequencies where the best signal-to-noise ratio is obtained. Here we will focus on wavelet scale 4 corresponding to the frequency interval [0.06 ; 0.12] Hz. The number of available wavelet coefficients is then $n=122$.

For a given rat $k\in\{1,\dots,11\}$, let $\mathbf W^{(k)}_\ell$ be the vector of wavelet coefficients at scale 4 for the time series of the $\ell$-th cerebral region, $\ell=1,\dots, 51$ and $k=1,\dots,11$. The vector $\mathbf W^{(k)}_\ell$ has length 122. We introduce $(z(\hat \rho_{\ell m}^{(k)}))_{\ell,m=1,\dots,p}$ the matrix of Fisher statistics evaluated on the wavelet coefficients processes $\mathbf W^{(k)}_{\ell}$ and $\mathbf W^{(k)}_m$. With previous notations, for a given rat $k$, for all $(\ell,m)$, $z(\hat \rho_{\ell m}^{(k)})$ corresponds to the Fisher statistics $T^{(3)}_{\ell m}$ evaluated with $Y^{(\ell)}=\mathbf W^{(k)}_{\ell}$.

The aim is then to test if there are connections between each pairs of cerebral region at a given scale $j\geq 0$. 
The following tests should thus be applied, for each rat $k$, at the scale $j=4$ in this study:
\begin{equation} \label{test_wav}\tag{P-3} H_{0,\ell m}(j) : \rho_{\ell m}(j)=0  \text{~ against ~}  H_{1,\ell m}(j) :  \rho_{\ell m}(j)\neq 0,
\end{equation} all  $1\leq \ell<m\leq p$.
for
Since the whole brain of rats are aggregated into $p=51$ cerebral regions, we have to deal with $m = 1275$ tests for each of the 11 rats.

\subsection{Numerical results}

As explained above, we consider tests \eqref{test_wav} for each rat, on wavelet coefficients at scale 4. A rejection of hypothesis $(H_{0,\ell m})$ means a significant correlation between the activities of cerebral regions $\ell$ and $m$. 

Examples of correlation matrices obtained are given in Figure~\ref{fig:cor_rat}. Even if dead rats correlation matrices are much closer to identity matrices than correlation matrices on alive rats, some non zero values are observed and a test of significance is needed.

\begin{figure}[!ht]
\centering
\caption{Examples of two correlation matrices between wavelet coefficients obtained at scale 4 for an alive rat (left) and a dead rat (right).}
\label{fig:cor_rat}
\hspace*{-1cm} \includegraphics[height=8cm]{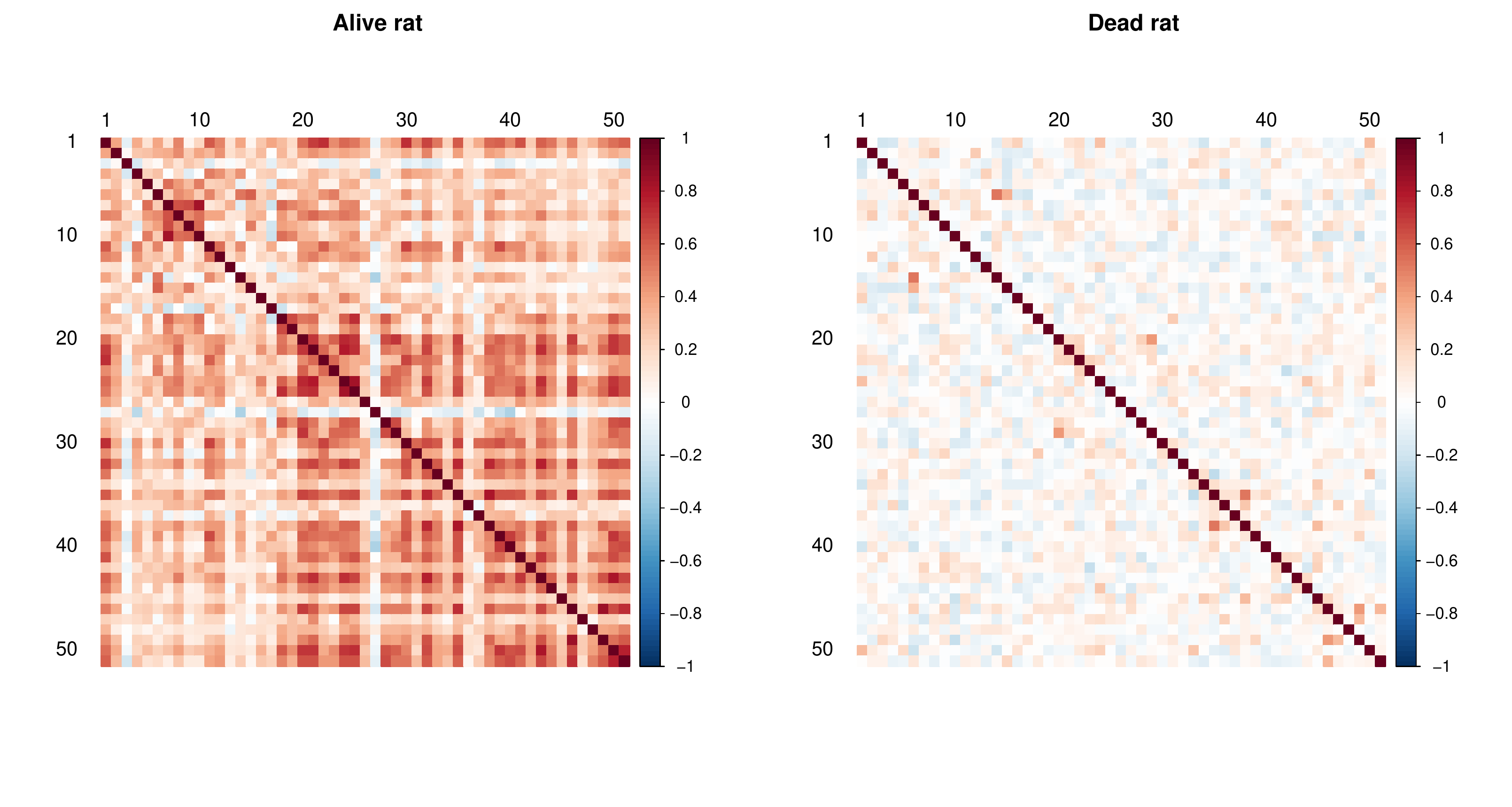}
\end{figure}

Boxplots of empirical correlations are displayed in Figure~\ref{fig:cor_rat_boxplots}. They state a clear difference between dead and alive rats correlation distributions. An important remark is also that in real application the alternative non zero value is not constant and can be much higher than the value taken in our simulation (which was at the maximum equal to 0.2). Indeed for each alive rat, we observe instances of empirical correlation higher than $0.8$. We thus expect a signal-to-noise ratio higher in the application than in our simulation, and therefore good power performances of multiple testing.

\begin{figure}[!ht]
\centering
\caption{Boxplots of empirical correlations between wavelet coefficients obtained at scale 4  (i.e., the frequency interval [0.06 ; 0.12] Hz) on fMRI recordings of rats. The first four rats are dead while rats 5 to 11 are alive.}
\label{fig:cor_rat_boxplots}
\includegraphics[height=8cm]{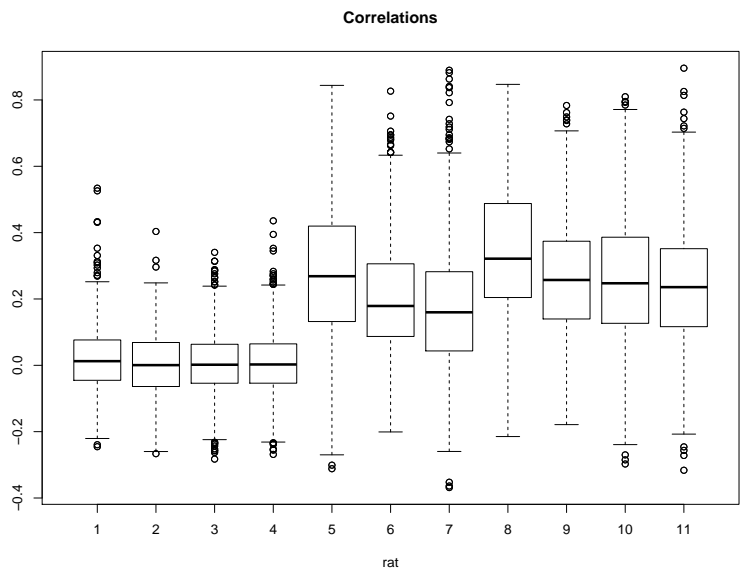}
\end{figure}

Since we consider Fisher statistics $T^{(3)}$, with sample size $n=122$, the simulation study displays that the FWER is controlled for all procedures (see Figure~\ref{fig:fwer2_fisher}). Next the most powerful approach is step-down \MaxT~parametric bootstrap in general, but for small samples \Sidak~procedure is competitive. Moreover the numerical instability on the quantile estimation for \MaxT~as well as the computation time give preference to step-down \Sidak~multiple correction (see Table~\ref{tab:power_stat}). 

We thus define the rejection set of \eqref{test_wav} applying step-down \Sidak's multiple testing procedure on Fisher statistics. As we have to deal with $1275$ tests for each rat, a multiple testing procedure is necessary. Without a multiple testing correction on $p$-values, the inferred graph is highly biased, as illustrated by the clearly non-empty graph for a dead rat displayed in Figure~\ref{fig:without}. A similar result was obtained on the detection of an activation on fMRI recordings for a dead salmon in \cite{bennett2011neural}.

\begin{figure}[!ht]
\centering
\caption{Illustration of the necessity to make multiple correction for a given dead rat:
Left: estimate of the connectivity graph between brain regions without correction for multiple testing, using the  raw values of the empirical correlations; Right: estimation of the same graph using multiple testing on correlations (Step-down \Sidak\ procedure with Fisher statistics) ; $\alpha = 5\%$.}
\label{fig:without}
\includegraphics[height=7cm]{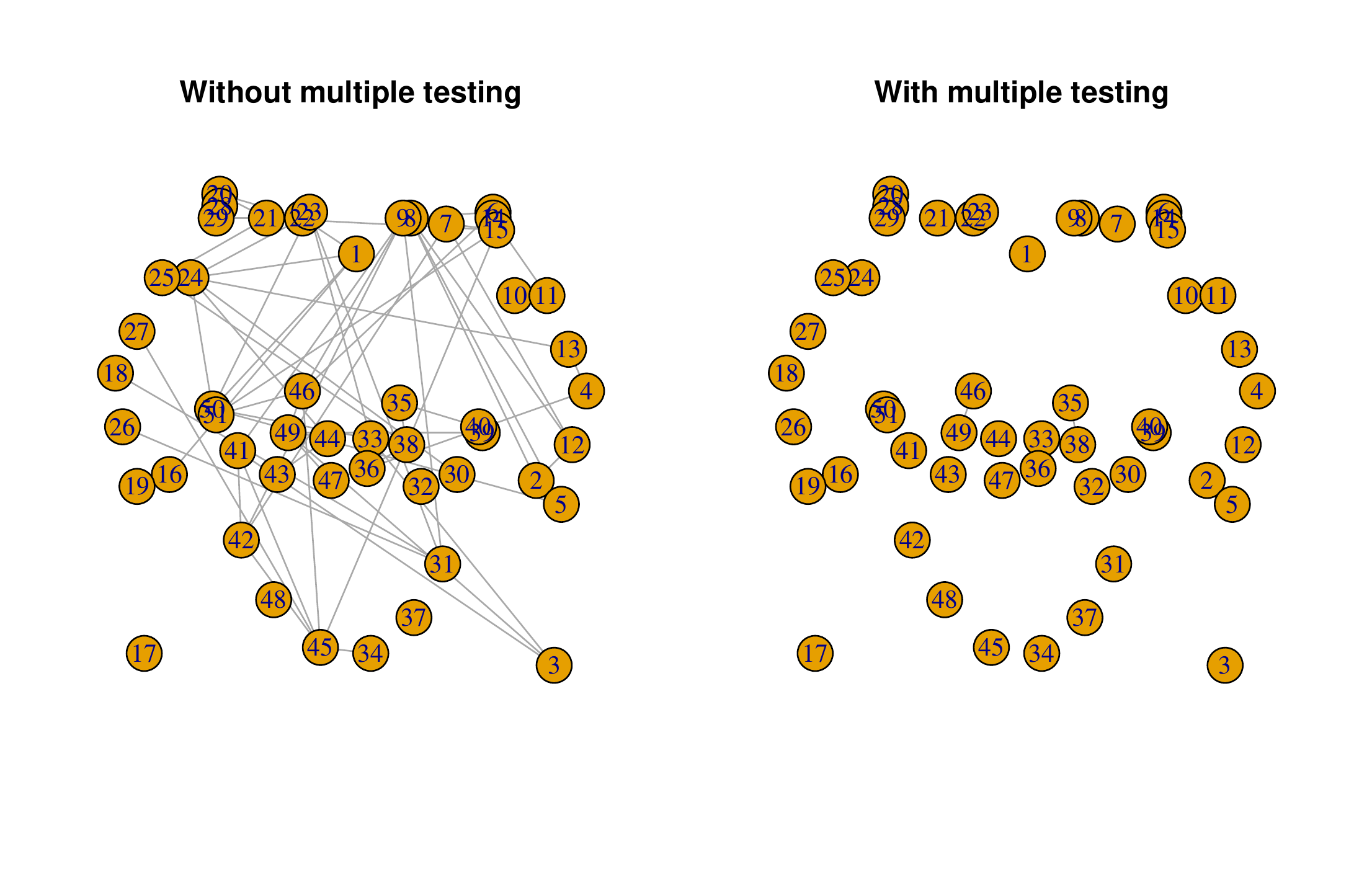}
\end{figure}

It is not possible in general to evaluate the accuracy of procedures on real dataset because of the lack of ground truth. However, since there is actually no cerebral activity in a brain of a dead rat, all null hypotheses to be tested are true nulls and the number of rejected hypotheses must be equal to zero in this case. Indeed, we obtain that the number of significant correlations is zero or near to zero for dead rats (see Table \ref{tab:rats}). We can still observe a few remaining connections in the graphs from very close brain regions. This is due to the fact that the fMRI scanner is introducing spurious correlations between neighboring parts of the brain. Some examples of estimated graphs for dead rats and alive rats are displayed in Figure~\ref{fig:graphs_rats_fisher}. Edges for dead rats are not visible because they are between nodes at very short distance.

\begin{table}[!ht]
\begin{center}

\begin{tabular}{rcccc}
\toprule
Dead rat & 1 & 2 & 3 & 4  \\ 
\midrule
$|E|$ &  4  &  1  &  0 &  2 \\  \bottomrule \\
\end{tabular}
\begin{tabular}{rccccccc}
\toprule
Alive rat &  5 & 6 & 7 & 8 & 9 & 10 & 11 \\
\midrule
$|E|$&   444  &  236  &  201  &  595 &   385  &  410  &  318 \\
\bottomrule \\
\end{tabular}

\caption{\label{tab:rats} Number of rejected null hypotheses $| E |$ (that is also the number of estimate edges) obtained by  step-down \Sidak~procedure on Fisher statistics, evaluated at scale 4 with a threshold $\alpha = 5\%$.}
\end{center}
\end{table}

Note that there is a high variability of the results with respect to rats. The highest number of detections is obtained for the rats $5$ and $8$ with more than 33\% of edges. For the others, the number of detections is between $15\%$ and $33\%$.
 
\begin{figure}[!h]
\centering

\caption{Examples of graphs obtained at scale 4 for some alive rats (first row) and some dead rats (second row).  Step-down \Sidak~correction was used with Fisher statistics.}
\label{fig:graphs_rats_fisher}
\hspace*{-2cm}
\includegraphics[height=10cm]{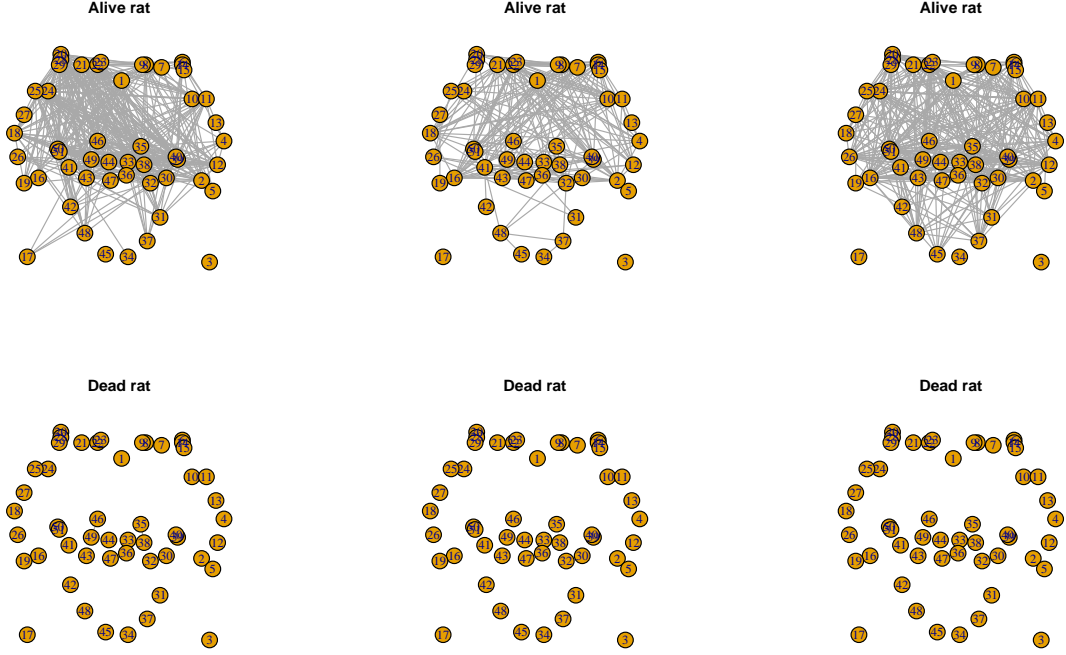}
\end{figure}

\section{Asymptotic control of the FDR for correlation tests}

Another usual criterion for multiple testing is False Discovery Rate (FDR). Denoting $\mathcal Q$ the proportion of false discoveries, for all $P \in \mathcal P$, we define the FDR as
\begin{equation}\label{eq:BH}\text{ FDR}( \mathcal R,P) = \mathbb E[\mathcal Q ], \text{ where } \mathcal Q = \frac{|\mathcal R \cap \mathcal H_0(P)|}{|\mathcal R| \vee 1}. 
\end{equation}
``$ |\mathcal R| \vee 1$" is the maximum between $|\mathcal R|$ and 1 when no hypothesis is rejected ($|\mathcal R| = 0$).

A well-known procedure for controlling FDR is Benjamini-Hochberg's (BH), \cite{benjamini1995controlling}. Denoting $(p_{(i)})_{1\le i \le m}$ the family of the ordered $p$-values, we can write the index set of rejected null hypotheses by the BH procedure as
\begin{equation} \mathcal R_{BH} = \Big \{i \in \{1,\hdots,m\}: p_{i} \le \alpha \hat k / m \Big \}, \end{equation}
where $\hat k =  \max \{ k \in \{0,\hdots,m\} : p_{(k)} \le \alpha k / m\}$ with the convention $p_{(0)}=0$.

We can prove that the limit of the FDR of the BH procedure is equal to the FDR of the limiting experience.
\begin{thm} \label{fdrlim} Suppose
\begin{equation} \label{hypoChat} \forall P \in \mathcal P, \, \sqrt n \left( \: \hat \theta_{n,\cdot} -\theta (P)\right)\overset{d}{\longrightarrow} P_{\infty}.
\end{equation}
Then for all $P \in \mathcal P$, 
\begin{equation}  \underset{n\to+\infty}{\lim} \text{ } \rm{FDR}(\textit{P}) =  \text{ } \rm{FDR}(\textit{P}_{\infty}). \nonumber
 \end{equation}
\end{thm}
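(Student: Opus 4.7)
The plan is to express the false discovery proportion as a deterministic function of the test statistics and combine weak convergence with the continuous mapping theorem and bounded convergence.

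First, I would write $\mathcal{Q}_n = \Psi_{\mathcal{H}_0(P)}(T_{n,1},\dots,T_{n,m})$, where the map $\Psi_{\mathcal{H}_0}$ takes a vector of statistics, forms the two-sided Gaussian $p$-values $p_i = 2(1-\Phi(|t_i|))$, runs the BH procedure to obtain a rejection set $\mathcal{R}$, and returns $|\mathcal{R}\cap\mathcal{H}_0|/(|\mathcal{R}|\vee 1)$. Since $\mathcal{Q}_n$ is bounded in $[0,1]$, establishing $\mathcal{Q}_n\xrightarrow{d}\mathcal{Q}_\infty$ suffices by bounded convergence (Portmanteau) to conclude $\text{FDR}(P) = \E[\mathcal{Q}_n]\to\E[\mathcal{Q}_\infty] = \text{FDR}(P_\infty)$.

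Second, I would identify the joint limit of $(T_{n,1},\dots,T_{n,m})$. For $i\in\mathcal{H}_0(P)$ one has $\theta_i=0$ so $T_{n,i}=\sqrt{n}(\hat\theta_{n,i}-\theta_i)$, and by \eqref{hypoChat} the subvector $(T_{n,i})_{i\in\mathcal{H}_0(P)}$ converges in distribution to the corresponding marginal of $P_\infty$. For $i\notin\mathcal{H}_0(P)$ one writes $T_{n,i}=\sqrt{n}(\hat\theta_{n,i}-\theta_i)+\sqrt{n}\,\theta_i$, the first term is tight and the second diverges, so $|T_{n,i}|\xrightarrow{\mathbb{P}}+\infty$ and the corresponding $p_{n,i}\xrightarrow{\mathbb{P}}0$. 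By a Slutsky-type argument (the limit of the alternative $p$-values is deterministic), the full vector of $p$-values jointly converges: the null coordinates follow the law induced by $P_\infty$, the alternative coordinates are $0$.

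Third, I would verify that $\Psi_{\mathcal{H}_0(P)}$ is almost-surely continuous at the limiting configuration. As a function of the $p$-values, $\Psi_{\mathcal{H}_0(P)}$ is piecewise constant; its discontinuities are contained in the set where some $p_i$ exactly hits one of the BH thresholds $\alpha k/m$, $k\in\{1,\dots,m\}$, i.e.\ where some $T_{n,i}$ equals $\Phi^{-1}(1-\alpha k/(2m))$. Under the hypothesis~\eqref{hypGauss} used throughout the paper, the marginals of $P_\infty$ are continuous, so this finite union of hyperplanes has probability zero under the limit. The continuous mapping theorem then yields $\mathcal{Q}_n\xrightarrow{d}\mathcal{Q}_\infty$.

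The main obstacle is treating the degeneracy of the alternative coordinates ($|T_{n,i}|\to\infty$) simultaneously with the non-degenerate null limit. I would sidestep this by working on the $p$-value scale rather than on the statistic scale: the null $p$-values have a well-defined continuous limit law while the alternative $p$-values converge to the deterministic value $0$, so joint weak convergence reduces to a standard Slutsky argument. The only other point requiring care is the null-set condition for the continuous mapping theorem, which is immediate from the continuity of the limiting marginals.
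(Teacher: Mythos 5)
Your argument is correct in substance but takes a genuinely different route from the paper's. The paper does not apply the continuous mapping theorem to the full BH functional. Instead it uses the classical decomposition of the FDR of the BH procedure into a finite double sum, over $i \in \mathcal H_0(P)$ and $k \in \{1,\dots,m\}$, of terms $\frac{1}{k}\bigl[\P\bigl(Y^{(n)}\in\mathcal B_{k+1,i}\bigr)-\P\bigl(Y^{(n)}\in\mathcal B_{k,i}'\bigr)\bigr]$, where $\mathcal B_{k,i}$ and $\mathcal B_{k,i}'$ are events of the form $\{\hat k\le k-1 \text{ and } p_i\le \alpha(k-1)/m\}$; it then checks that the topological boundary of each such event has Lebesgue measure zero (being contained in a finite union of level sets $\{f(y_\ell)=\alpha j/m\}$) and applies Portmanteau term by term. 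Your route --- FDP as a bounded functional $\Psi$ of the $p$-value vector, joint weak convergence of that vector, almost-sure continuity of $\Psi$ at the limit, then CMT plus bounded convergence --- reaches the conclusion in one stroke and has the merit of making explicit what the paper leaves implicit, namely the behaviour of the coordinates under the alternative. That is also where you genuinely diverge: you treat the alternatives as fixed, so that $|T_{n,i}|\to\infty$ and the limiting alternative $p$-values are degenerate at $0$, whereas the paper's proof works with the centred vector $Y^{(n)}=\sqrt n\,(\hat\theta_{n,\cdot}-\theta(P))$ and one-sided $p$-values $f(y)=1-\Phi(y+c)$, i.e.\ it is implicitly set in a local-alternatives framework ($\sqrt n\,\theta_i\to c_i$) in which the alternative coordinates keep a non-degenerate Gaussian limit. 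The two objects called $\mathrm{FDR}(P_\infty)$ are therefore not the same; given the vagueness of the theorem statement your reading is legitimate, but you should say explicitly which limiting experiment your $\mathcal Q_\infty$ lives in (all alternative $p$-values equal to $0$) and note that continuity still holds at that configuration because $0$ is not one of the thresholds $\alpha k/m$. Your remaining verifications --- the discontinuity set of the BH map being contained in $\bigcup_{i,k}\{p_i=\alpha k/m\}$, the atomlessness of the null marginals of $P_\infty$ under invertibility of $\Sigma$, and the passage from $\mathcal Q_n\xrightarrow{d}\mathcal Q_\infty$ to convergence of expectations by boundedness --- are sound and reproduce, in aggregated form, the boundary-measure-zero computation the paper carries out event by event.
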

In particular, if $P_{\infty}$ is Positive Regression Dependent on each one from a Subset (PRDS) on $\{1\le i \le m : c_i = 0 \}$, then \cite{benjamini1995controlling} states that the asymptotic control of FDR by BH procedure is acquired. In the one-sided Gaussian testing framework, the PRDS assumption is satisfied whenever $\Sigma_{ij} \geq 0$, for all $(i, j)$, see \cite{benjamini2001control}. However, this result is not helpful for testing  correlations in two-sided Gaussian setting. For instance, \cite{yekutieli2008false} gives some examples of non-PRDS two-sided Gaussian tests. We refer to \cite{roux}, Chapter 4, for an overview of results of the BH procedure in two-sided Gaussian setting. 

Resulting from a simulation study, we can state that indeed the matrix $\Omega(\Gamma)$ of Proposition \ref{prop:corrgauss} does not provide a positive structure dependence in general. There is no clear property to test if $p$-values are PRDS. We decided to examine a stronger but more easily verifiable assumption, that is, Multivariate Total Positivity of order 2 (MTP$_2$), see \cite{karlin1980classes}. The evaluation of the proportion of MTP$_2$ distributions is based on Theorem $3.1$ and Theorem $3.1^{\prime}$ of \cite{karlin1981total}, which give necessary and sufficient condition on the matrix $\Gamma$ (resp. $\Omega(\Gamma)$) for $|X|$ (resp. $(|\rho_h|)_{h \in \mathcal H}$) having an MTP$_2$ distribution. We generate $100,000$ covariance $d$-matrices $\Gamma$ using the R-package \textit{ClusterGeneration} of \cite{joe2006generating}, and we evaluate the occurrence of  MTP$_2$ distributions on these $100,000$ matrices. Results are displayed in Table~\ref{mtp2occ}. The proportion of the vector of the absolute value of correlation coefficients of $X$, that is, $(|\rho_h|)_{h \in \mathcal H}$, having an MTP$_2$ distribution is evaluated in two different cases: when $|X|$ does not have an MTP$_2$ distribution (third column of Table~\ref{mtp2occ}) and when $|X|$ has an MTP$_2$ distribution (fourth column of Table~\ref{mtp2occ}).

\begin{table}[!h]
\begin{tabular}{lccc}
\toprule
Distribution of: &   $|X|$ & \multicolumn{2}{c}{$(|\rho_h|)_{h \in \mathcal H}$}  \\
&&without MTP$_2$ constraint on $|X|$ &with MTP$_2$ constraint on $|X|$\\ 
\midrule
$d=3$ &   5100 &  10223 & 0 \\ 
$d=4$ &    82 &  38 &  0 \\ 
$d=5$ &   0 &  0 &  $\cdot$ \\  
\bottomrule
\end{tabular}
\caption{ \label{mtp2occ} Occurrence of MTP$_2$ distributions on 100,000 simulations. The dot means that the quantity is not computable.}
\end{table}
  As shown in Table~\ref{mtp2occ}, the MTP$_2$ constraint seems to be a very restrictive constraint (see the second column of Table~\ref{mtp2occ}). Moreover, even if the absolute value of the observed vector $X$ has an MTP2$_2$ distribution, the transformation $\Omega$ does not preserve the positive dependence structure (see the last column of Table~\ref{mtp2occ}) for the correlation coefficients.

In \cite{romano2008control}, the authors proposed both subsampling and bootstrap procedures that control the false discovery rate (FDR) under dependence, especially for the two-sided testing problem~\eqref{eqn:tests}. Unfortunately, their methods require that all the $p$-values under the alternative are equal to zero. Finally, \cite{cai2016large} developed a truncated bootstrap alternative to BH for correlation tests. The idea is to estimate the threshold index $\hat k$ in \eqref{eq:BH} by bootstrap evaluations of the FDR. They add a truncation based on the asymptotic Gaussian property of tests statistics. Theoretical control is obtained, under sparsity assumptions.

\subsection*{Conclusion}

This work was motivated by a real data application in neuroscience. Our aim is to identify the connectivity graphs of brain areas by testing if the correlation between recordings of pairs of brain regions is significant. Our statistical modeling includes asymptotically Gaussian statistics and multiple corrections because many simultaneous tests are applied (here, 1275 in the application). 
Hence in this work we have studied multiple testing when statistics are asymptotically Gaussian, and possibly correlated. Four multiple testing procedures are presented: Bonferroni, \Sidak, Romano-Wolf non parametric bootstrap (\BootRW) and a parametric bootstrap (\MaxT). For each of these methods, we verify that the FWER is asymptotically controlled. Besides, special attention is given to the case of correlation testing. A simulation study then highlights problems that may be encountered in applications. First for many statistics, a sufficient sample size is necessary to ensure the control of FWER. Next the parametric bootstrap suffers from numerical instability. Interestingly, the sparsity of the false hypothesis does not influence the quality of the procedures. Finally, we apply a multiple testing correction on a real dataset, inferring the connectivity graphs of small animals from resting state fMRI recordings. The recordings for dead animals enable us to verify that the obtained results are coherent by finding a nearly empty graph. In contrast alive animals connectivity graphs are not empty. This confirms the utility of our approach for real data applications.

\bibliographystyle{apalike}
\bibliography{biblio_fwer}

\appendix

\section{Proofs}\label{app}

\subsection{Proof of~Proposition~\ref{prop:fwerbonf}}

For all $P \in \mathcal P$,
\begin{equation*}\textrm{FWER}\Big(\mathcal R^{bonf}_{\alpha},P\Big) =  \mathbb P \left (\bigcup_{ i \in \mathcal H_0(P)}  \{ p_{n,i} \le \alpha/m \} \right).
\end{equation*}

Thus,
\[
\underset{n\to+\infty}{\lim} \textrm{FWER} \Big(\mathcal R^{bonf}_{\alpha},P\Big)  \le \displaystyle{\sum_{i \in \mathcal H_0(P)}} \frac{\alpha}{m} \le  \frac{\alpha m_0(P)}{m} \le \alpha. 
\]

\subsubsection{Proof of~Proposition~\ref{prop:fwersidak}}

In order to prove Proposition~\ref{prop:fwersidak}, we first need the following result.

\begin{prop} \label{propport}
For all $x>0$, for all $P \in \mathcal P$,
\begin{equation} 
\label{port} 
\underset{n \to + \infty}{\lim} \left \{\P \left ( \sqrt{n}\,  \left | \: \widehat \theta_{n,i}\left (\mathbb X^{(n)} \right) - \theta_i(P) \right| \,\le\, x, \text{ for all } i \in \mathcal H_0(P) \right) \right \} \ge (2\Phi(x)-1)^{m_0(P)}. 
\end{equation}
\end{prop}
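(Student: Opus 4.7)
The plan is to combine the asymptotic Gaussian assumption~\eqref{hypGauss} with \Sidak's inequality, passing to the limit via the Portmanteau theorem on a rectangle that is a continuity set for the limiting Gaussian distribution.

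First, I would project the asymptotic statement~\eqref{hypGauss} onto the coordinates indexed by $\mathcal H_0(P)$. Let $\pi_{\mathcal H_0(P)} : \R^m \to \R^{m_0(P)}$ denote the coordinate projection. Since $\pi_{\mathcal H_0(P)}$ is continuous and linear, the continuous mapping theorem yields
\[
\bigl(\sqrt{n}\,(\widehat \theta_{n,i}(\mathbb X^{(n)}) - \theta_i(P))\bigr)_{i \in \mathcal H_0(P)} \xrightarrow[n\to+\infty]{d} Z,
\]
where $Z \sim \mathcal N_{m_0(P)}(0, \Sigma_0)$ and $\Sigma_0 = (\Sigma_{ij})_{i,j \in \mathcal H_0(P)}$ is the principal submatrix of $\Sigma$ indexed by $\mathcal H_0(P)$. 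Because $\Sigma$ is positive definite, so is $\Sigma_0$; hence $Z$ has a density with respect to Lebesgue measure on $\R^{m_0(P)}$, and in particular, $\Sigma_0$ is invertible.

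Next, I would apply the Portmanteau theorem to the rectangle $A_x = \{ y \in \R^{m_0(P)} : |y_i| \le x,\ \forall i \in \mathcal H_0(P)\}$. Its topological boundary $\partial A_x = \bigcup_{i \in \mathcal H_0(P)} \{y \in A_x : |y_i| = x\}$ is a finite union of hyperplane pieces, hence has Lebesgue measure zero, and therefore $\P(Z \in \partial A_x) = 0$. Thus $A_x$ is a continuity set of the law of $Z$, and
\[
\lim_{n\to+\infty} \P\bigl( \sqrt{n}\,|\widehat \theta_{n,i}(\mathbb X^{(n)}) - \theta_i(P)| \le x, \ \forall i \in \mathcal H_0(P) \bigr) = \P(Z \in A_x).
\]

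Finally, since $\Sigma_0$ is invertible and $Z$ is centered Gaussian, \Sidak's inequality~\eqref{Sid} applies with the constant vector $b = (x,\dots,x)$, giving
\[
\P(Z \in A_x) \ge \prod_{i \in \mathcal H_0(P)} \P(|Z_i| \le x).
\]
The normalization assumption $\Sigma_{ii} = 1$ for $i$ with $\theta_i = 0$, i.e.\ for $i \in \mathcal H_0(P)$, ensures $Z_i \sim \mathcal N(0,1)$, so $\P(|Z_i| \le x) = 2\Phi(x) - 1$, and the product equals $(2\Phi(x)-1)^{m_0(P)}$, yielding~\eqref{port}.

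The only delicate point is the passage to the limit: I would need to argue carefully that $A_x$ is a continuity set (which follows from invertibility of $\Sigma_0$), since weak convergence alone gives only the inequality $\liminf_n \P(A_x^\circ) \ge \P(A_x^\circ)$ on open sets. Everything else is a direct application of assumption~\eqref{hypGauss} combined with \Sidak's inequality.
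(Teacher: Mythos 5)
Your proof is correct and follows essentially the same route as the paper's: pass to the limit on the closed rectangle via the Portmanteau theorem (using that its boundary is a null set for the nondegenerate Gaussian limit), then apply \Sidak's inequality~\eqref{Sid} and the normalization $\Sigma_{ii}=1$ on $\mathcal H_0(P)$. The only cosmetic difference is that the paper argues $\P(Z\in\partial B)=0$ by a union bound over the atomless marginals rather than via Lebesgue-negligibility of the boundary; both arguments are valid.
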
  

\begin{proof}
Let $Z_i= \lim_{n \to + \infty} \sqrt{n}\,  \left | \: \widehat \theta_{n,i}\left (\mathbb X^{(n)} \right) - \theta_i(P) \right|$. Then $Z=(Z_i)_{1 \leq i \leq m_0(P)}$ is $m_0(P)$-dimensional random vector with a multivariate Gaussian distribution  $ \mathcal N_{m_0(P)}\Big(0,(\Sigma)_{i,i^{\prime} \in \mathcal H_0(P)}\Big) $. Let $B = \Big\{z \in \mathbb R^{m_0(P)}: \|z\|_{\infty} \le x  \Big\}$. Let $\partial B$ denote the frontier of the set $B$. For all $x>0$,
\begin{align*}
\mathbb P(Z \in \partial B ) &= \mathbb P(||Z||_\infty = x)\\
&= \mathbb P \bigg(\sup_{1\le j \le m_0(P)} |Z_j| = x \bigg) \\
&\le \mathbb P \Big(\exists j \in \{1,\hdots,m_0(P)\} : |Z_j| =x\Big)\\
&\le \sum_{j=1}^{m_0(P)} \mathbb P(|Z_j|=x)\\
&\le 0,
\end{align*}
By Portmanteau's lemma \citep{billingsley2013convergence}, it follows that for all $P \in \mathcal P$,
\begin{equation*}
\underset{n\to+\infty}{\lim}\P \left( \sup_{i\in\mathcal H_0(P)} \sqrt{n} \lvert \hat \theta_{n,\cdot}  - \theta(P) \rvert \le x \right) =\P(\|Z\|_{\infty} \le x),
\end{equation*}
Applying \eqref{Sid}, for all $P \in \mathcal P$,
\begin{equation*} 
\underset{n\to+\infty}{\lim}\P \left(\sup_{i\in\mathcal H_0(P)} \sqrt{n} \lvert \hat \theta_{n,\cdot}  - \theta(P) \rvert\le x \right) \ge  \Big[\P( | Z_1 | \le x ) \Big]^{m_0(P)}, 
\end{equation*}  
which concludes the proof.  
\end{proof}

We now are in position to prove Proposition~\ref{prop:fwersidak}.

For all invertible $\Sigma$, for all $P \in \mathcal P$,
\begin{align*}
\mathrm{ FWER}(\mathcal R^s_\alpha, P) &= \P\Big( \exists \: i \in\mH_0(P) : |T_{n,i}| > c_{\alpha}^s \Big) \\
&= 1 - \P\Big( \forall \: i \in\mH_0(P), |T_{n,i}| \le c_{\alpha}^s \Big) \\
&= 1 - \P\Big(\lVert(T_{n,i})_{i \in\mH_0(P)}\rVert_{\infty} \le c_{\alpha}^s \Big)
\end{align*}
Using Proposition~\ref{propport},
\begin{equation*} 
\underset{n\to+\infty}{\lim}  \mathrm{ FWER}(\mathcal R^s_\alpha, P) \le 1 - (2\Phi(c_{\alpha}^s )-1)^{m_0(P)} \le \alpha.
\end{equation*}

\subsection{Proof of~Proposition~\ref{prop:rwcontrol}}

Under $P$, the matrix $\Sigma$ is assumed to be invertible. Therefore $\left(T_{n,i}\left ( \mathbb X^{(n)} \right) \right)_{ i \in \mathcal H_0(P)}$ has a non-degenerate asymptotic distribution.
The result then follows from Theorem~7 of~\cite{romano2005exact}.

\subsubsection{Proof of~Proposition~\ref{prop:maxTcontrol}}

Let $P \in \mathcal P$. Denote by $\restriction{ \mathcal N_m(0,\Sigma)}{\mathcal H_0(P)}$ the restriction of the Gaussian distribution $ \mathcal N_m(0,\Sigma)$ on $\mathcal H_0(P)$, namely $ \mathcal N_{m_0(P)}\Big(0,(\Sigma)_{i,i^{\prime} \in \mathcal H_0(P)}\Big) $.

First, since the functions $(T_{n,i})_{i \in\mH_0 (P)}  \xrightarrow[n\to +\infty]{\quad d \quad} \restriction{ \mathcal N_m(0,\Sigma)}{\mathcal H_0(P)}$
 and $x \mapsto \|x\|_{\infty} $ for $x\in \mathbb R^{|\mathcal H_0(P)|}$ are continuous, then by the continuous mapping theorem, we have that
\begin{equation} \label{etp1} 
\|(T_{n,i})_{i \in\mH_0(P)})\|_{\infty}  \xrightarrow[n\to +\infty]{\quad d \quad} \|\restriction{ \mathcal N_m(0,\Sigma)}{\mathcal H_0(P)}\|_{\infty} .
\end{equation}
 
Second, let us establish that  
\begin{equation} \label{cvquantile} 
t_{n,\alpha}(\hat \Sigma_n)  \overset{\mathbb P}{\longrightarrow}  t_{\alpha}( \Sigma).   
\end{equation}
Let $x\in\R$. For all $n\in\N$, we introduce the cumulative distribution functions $\varphi_x(\hat\Sigma_n)$ and $\varphi_x(\Sigma)$, defined by   
\begin{align*} 
\varphi_x(\hat \Sigma_n) &=\P(\|\hat \Sigma_n^{1/2}\xi \|_{\infty} \le x\mid \hat \Sigma_n ),\\
\varphi_x(\Sigma) &=\P(\|\Sigma^{1/2}\xi \|_{\infty} \le x),
\end{align*}
where $\xi \sim \mathcal N_m(0,I_m)$.

The Cram\'er-Wold device (\cite{gut2012probability}, Theorem 10.5) involves $\hat \Sigma_n^{1/2} \xi \xrightarrow[n\to +\infty]{d} \Sigma^{1/2}\xi$ and thus, by the continuous mapping theorem, $\|\hat \Sigma_n^{1/2} \xi \|_{\infty}\xrightarrow[n\to +\infty]{d} \|\Sigma^{1/2}\xi \|_{\infty}$. In addition, applying Portmanteau's lemma, we obtain that $\varphi_x(\hat\Sigma_n)$ converges in probability to $\varphi_x(\Sigma)$ when $n$ goes to infinity.
Therefore, for all $\epsilon > 0$,
{\begin{equation}\label{cveps} 
\varphi_{t_{\alpha}(\Sigma)-\epsilon}(\hat \Sigma_n) \overset{\P}{\longrightarrow}  \varphi_{t_{\alpha}(\Sigma)-\epsilon}(\Sigma). 
\end{equation}}

Let $\epsilon >0$. One has
\begin{align*}
\lefteqn{\mathbb P \Big(t_{n,\alpha}(\hat \Sigma_n) \le t_{\alpha}(\Sigma)-\epsilon \Big) }\\
&=\P \Big(\varphi_{t_{\alpha}(\Sigma)-\epsilon}(\hat \Sigma_n) \ge 1-\alpha \Big) \\
&=\P \Big(\varphi_{t_{\alpha}(\Sigma)-\epsilon}(\hat \Sigma_n) - \varphi_{t_{\alpha}(\Sigma)-\epsilon}( \Sigma) \ge 1-\alpha -\varphi_{t_{\alpha}(\Sigma)-\epsilon}( \Sigma) \Big).
\end{align*}
Write $ \varphi_{t_{\alpha}(\Sigma)-\epsilon}(\Sigma)=1-\alpha-\beta$, with $\beta >0$. The right-hand side is bounded by
\begin{align*}
\P \Big (\varphi_{t_{n,\alpha}(\Sigma)-\epsilon}(\hat \Sigma_n) - \varphi_{t_{n,\alpha}(\Sigma)-\epsilon}( \Sigma) \ge \beta \Big) &\le \P\Big( \big |\varphi_{t_{n,\alpha}(\Sigma)-\epsilon}(\hat \Sigma_n) - \varphi_{t_{n,\alpha}(\Sigma)-\epsilon}( \Sigma) \big | \ge \beta \Big).
\end{align*}
Then, convergence~\eqref{cvquantile} follows from~\eqref{cveps}.

We now finally establish the asymptotic FWER control for the procedure $\mathcal R^{\MaxT}$. By definition,
\begin{equation} 
\text{FWER}(\mathcal R^{\MaxT}_\alpha, P) =\P \Big(\|(T_{n,i})_{i \in\mH_0(P)} \|_{\infty} \ge t_{n,\alpha}(\hat \Sigma_n) \Big). 
\nonumber 
\end{equation} 
For all $\delta > 0$, this quantity is bounded by 
\begin{equation}
\P \Big (\|(T_{n,i})_{i \in\mH_0(P)} \|_{\infty} > t_{\alpha}(\Sigma) - \delta \Big) +\P \Big(  t_{\alpha}(\hat \Sigma_n) \le t_{n,\alpha}(\Sigma) - \delta \Big) . 
\nonumber 
\end{equation}
Then, using~\eqref{etp1} and~\eqref{cvquantile}, we have
\begin{equation} 
\underset{n \to + \infty}{\limsup} \text{ FWER}(\mathcal R^{\MaxT}_\alpha, P) \le\P \Big (\| \restriction{\mathcal N_m(0,\Sigma)}{\mathcal H_0(P)}\|_{\infty} > t_{\alpha}(\Sigma) - \delta \Big). \nonumber
\end{equation}
Letting $\delta$ tend to 0 we obtain the asymptotic control.

\subsection{Proof of Proposition~\ref{lapropo}}

Let $P \in \mathcal P$. By \eqref{controlSSas}, there exists an event $\mathcal E$, such that  $\mathcal E \cap \mathcal H_0(P) \subseteq \mathcal E\setminus {\mathcal R_{\mathcal H_0(P)}}$ and
\begin{equation} \label{conv} \P(\mathcal E) \xrightarrow[n\to +\infty]{} 1 - \alpha. 
\end{equation}
For all $j \ge 0$, $\mathcal H_0(P) \subseteq \mathcal C_j$ implies that $\mathcal R_{\mathcal H_0(P)} \supseteq \mathcal R_{\mathcal C_j}$ by \eqref{incas}. Thus we have $\mathcal E \setminus\mathcal R_{\mathcal H_0(P)} \subseteq \mathcal E\setminus \mathcal R_{\mathcal C_j}$. Consequently, $\mathcal E \cap {\mathcal H_0(P)} \subseteq \mathcal C_{j+1}$. Since $\mathcal H_0(P) \subseteq \mathcal C_0 = \{1,\hdots,m\}$, a recurrence gives $\mathcal E \cap\mathcal H_0(P) \subseteq \mathcal E \cap\mathcal C_j$ for all $j\ge0$. It results that $\mathcal E\cap\mathcal H_0(P) \subseteq \mathcal E\setminus \mathcal R_{{\mathcal C}_\infty}$.
Finally, using convergence \eqref{conv},
\begin{equation*} 
\P(\mathcal H_0(P) \subseteq \{1,\dots,m\}\setminus \mathcal R_{{\mathcal C}_\infty}) \xrightarrow[n\to +\infty]{} 1 - \alpha. 
\end{equation*}
Since the left-hand side is equal to $1-\text{FWER}\Big(R_{{\mathcal C}_\infty}, P\Big)$, Proposition \ref{lapropo} is proved.

\subsection{Proof of Theorem~\ref{fdrlim}}

Let $f : \mathbb R \rightarrow [0,1], x \mapsto 1 - \Phi(x + c)$ and $Y^{(n)} = \sqrt n \Big(\hat \theta_{n,\cdot} - \theta(P) \Big)$. For all $k \in \{1, \hdots,m\}$, for all $i \in \mathcal H_0(P)$, define the set $\mathcal B_{k,i}$ as
\[ \mathcal B_{k,i} = \Big\{ y \in \mathbb R^m:  \hat k \le k-1 \,\mathrm{ and }\,  f(y_i) \le \alpha (k-1) / m\Big \} 
\]
For all $y\in\R^m$, we can sort the m-dimensional vector $(f(y_1), f(y_2),\dots, f(y_m))$ as $f(y)_{(1)}\leq f(y)_{(2\leq\dots\leq f(y)_{(m)})}$. The set $\mathcal B_{k,i}$ can be rewritten as  
\[ \mathcal B_{k,i} =  \Big \{ y \in \mathbb R^m : f(y)_{(j)} > \alpha j /m \text{ for } j=k,\hdots, m,  \,\mathrm{ and }\, f(y_i) \le \alpha (k-1) / m \Big\}.
\]

Denote  $\eta$ the Lebesgue's measure and $\partial \mathcal B_{k,i}$ the frontier of the set $ \mathcal B_{k,i}$. We want to prove that for all $k \in \{1,\hdots,m\}$, for all $i \in \mathcal H_0(P)$, $\eta(\partial \mathcal B_{k,i}) = 0$. For all $k \in \{1, \hdots,m\}$, for all $i \in \mathcal H_0(P)$, we have 
\begin{align*} \eta(\partial \mathcal B_{k,i}) &= \eta \bigg (\partial \bigg [ \bigg ( \bigcap_{j=k}^m \Big \{ y  :  f(y)_{(j)} > \alpha j /m \Big\} \bigg ) \bigcap \Big \{ y :  f(y_i) \le \alpha (k-1) / m\Big\} \bigg] \bigg) \\
&\le \eta \bigg (\bigg ( \bigcup_{j=k}^m \partial\Big \{ y :f(y)_{(j)} > \alpha j /m \Big\}  \bigg) \bigcup \partial \Big \{ y :   f(y_i) \le \alpha (k-1) / m\Big\} \bigg ) \\
&\le \sum_{j=k}^m \eta \big (\{ y :  f(y)_{(j)} = \alpha j /m \} \big) + \eta \big ( \{ y  :  f(y_i) = \alpha (k-1) / m\} \big )  \\
& \le \sum_{l=1}^m  \sum_{j=k}^m \eta \big(\{ y  : f(y_l) = \alpha j /m \} \big) +\eta \big(\{ y : f(y_i) = \alpha (k-1) / m\} \big).
\end{align*}
Since the right-hand side is null, equality $\eta(\partial \mathcal B_{k,i})=0$ holds. Consequently for all $k \in \{1, \hdots,m\}$, for all $i \in \mathcal H_0(P)$, $\mathbb P (Y \in \partial \mathcal B_{k,i}) = 0$. Using Portmanteau's lemma, it follows that
\begin{equation*} \underset{n\to+\infty}{\lim}  \mathbb P \Big (Y^{(n)} \in \mathcal B_{k,i} \Big) = \mathbb P (Y \in  \mathcal B_{k,i} ). \end{equation*} 
 
If $\mathcal B_{k,i}^{\prime} = \Big\{ y \in \mathbb R^m : \hat k \le k-1 \,\mathrm{ and }\,  f(y_i) \le \alpha k / m\Big \}$, with a similar reasoning we can prove that  
\begin{equation*}\underset{n\to+\infty}{\lim}  \mathbb P \Big (Y^{(n)} \in \mathcal B_{k,i}^{\prime} \Big) = \mathbb P (Y \in  \mathcal B_{k,i}^{\prime} ). 
\end{equation*}
Therefore, we obtain
\begin{align*}
\lefteqn{\text{ FDR}(P)}\\
 &=  \sum_{i \in \mathcal H_0(P)} \sum_{k=1}^m \frac{1}{k} \Bigg [ \mathbb P \bigg (\hat k \le k, f \Big(Y^{(n)}_i \Big) \le \alpha k/m \bigg) - \mathbb P \bigg (\hat k \le k-1, f \Big(Y^{(n)}_i \Big) \le \alpha k/m \bigg) \Bigg] \\
&= \sum_{i \in \mathcal H_0(P)} \sum_{k=1}^m \frac{1}{k} \Bigg[ \mathbb P \Big (Y^{(n)} \in \mathcal B_{k+1,i} \Big) -\mathbb P \Big (Y^{(n)} \in \mathcal B_{k,i}^{\prime} \Big) \Bigg ]. 
\end{align*}
Taking the limit in the previous expression leads to
\begin{align}
\underset{n\to+\infty}{\lim}  \text{ FDR}(P) &= \sum_{i \in \mathcal H_0(P)} \sum_{k=1}^m \frac{1}{k} \Bigg[\mathbb P \Big (Y \in \mathcal B_{k+1,i} \Big) -\mathbb P \Big (Y \in \mathcal B_{k,i}^{\prime} \Big) \Bigg ]  \nonumber \\
&=   \text{FDR}(P_{\infty}) \nonumber .
\end{align}

\section*{Acknowledgments}
This work was partly supported by the project \textit{Graphsip} from Agence Nationale de la Recherche (ANR-14-CE27-0001).
This work was initiated in Marine Roux's PhD, \cite{roux}, and the authors are grateful to Marine Roux for the fruitful exchange. The authors would like to thank Etienne Roquain for very constructive scientific discussions. We also would like to thank Emmanuel Barbier and Guillaume Becq for providing us the data of the resting state fMRI on the rats.

\end{document}